\theoremstyle{plain}
\newtheorem{thm}{Theorem}[section]
\crefname{thm}{Theorem}{Theorems}
\theoremstyle{plain}
\newtheorem{lem}[thm]{Lemma}
\crefname{lem}{Lemma}{Lemmas}
\theoremstyle{plain}
\newtheorem{cor}[thm]{Corollary}
\theoremstyle{plain}
\newtheorem*{claim*}{Claim}
\crefname{claim}{Claim}{Claims}
\theoremstyle{definition}
\newtheorem{defn}[thm]{Definition}
\theoremstyle{plain}
\newtheorem{conjecture}[thm]{Conjecture}
\crefname{conjecture}{Conjecture}{Conjectures}
\newtheorem{question}[thm]{Question}
\theoremstyle{plain}
\newtheorem{prop}[thm]{Proposition}
\crefname{prop}{Proposition}{Propositions}
\theoremstyle{definition}
\theoremstyle{definition}
\theoremstyle{plain}
\newtheorem{claim}[thm]{Claim}
\crefname{appsec}{Appendix}{Appendices}
\date{}
\let\originalleft\left
\let\originalright\right
\renewcommand{\left}{\mathopen{}\mathclose\bgroup\originalleft}
\renewcommand{\right}{\aftergroup\egroup\originalright}
\renewcommand*{\UrlTildeSpecial}{%
  \do\~{%
    \mbox{%
      \fontfamily{ptm}\selectfont
      \textasciitilde
    }%
  }%
}%
\let\Url@force@Tilde\UrlTildeSpecial
\begin{document}
\title{Combinatorial anti-concentration inequalities, with applications}
\author{Jacob Fox\thanks{Department of Mathematics, Stanford University, Stanford, CA 94305.
Email: \href{mailto:jacobfox@stanford.edu} {\nolinkurl{jacobfox@stanford.edu}}.
Research supported by a Packard Fellowship and by NSF Career Award
DMS-1352121.}\and Matthew Kwan \thanks{Department of Mathematics, Stanford University, Stanford, CA 94305.
Email: \href{mailto:mattkwan@stanford.edu} {\nolinkurl{mattkwan@stanford.edu}}.
Research supported in part by SNSF project 178493.}\and Lisa Sauermann\thanks{Department of Mathematics, Stanford University, Stanford, CA 94305.
Email: \href{mailto:lsauerma@stanford.edu} {\nolinkurl{lsauerma@stanford.edu}}.}}

\maketitle
\global\long\def\RR{\mathbb{R}}%
\global\long\def\QQ{\mathbb{Q}}%
\global\long\def\E{\mathbb{E}}%
\global\long\def\Var{\operatorname{Var}}%
\global\long\def\CC{\mathbb{C}}%
\global\long\def\NN{\mathbb{N}}%
\global\long\def\ZZ{\mathbb{Z}}%
\global\long\def\GG{\mathbb{G}}%
\global\long\def\tallphantom{\vphantom{\sum}}%
\global\long\def\tallerphantom{\vphantom{\int}}%
\global\long\def\supp{\operatorname{supp}}%
\global\long\def\one{\mathbbm{1}}%
\global\long\def\d{\operatorname{d}}%
\global\long\def\Unif{\operatorname{Unif}}%
\global\long\def\Po{\operatorname{Po}}%
\global\long\def\Bin{\operatorname{Bin}}%
\global\long\def\Ber{\operatorname{Ber}}%
\global\long\def\Geom{\operatorname{Geom}}%
\global\long\def\Rad{\operatorname{Rad}}%
\global\long\def\floor#1{\left\lfloor #1\right\rfloor }%
\global\long\def\ceil#1{\left\lceil #1\right\rceil }%
\global\long\def\cond{\,\middle|\,}%
\global\long\def\g{\boldsymbol{\gamma}}%
\global\long\def\x{\boldsymbol{\xi}}%

\begin{abstract}
We prove several different anti-concentration inequalities for functions
of independent Bernoulli-distributed random variables. First, motivated
by a conjecture of Alon, Hefetz, Krivelevich and Tyomkyn, we prove
some ``Poisson-type'' anti-concentration theorems that give bounds
of the form $1/e+o\left(1\right)$ for the point probabilities of certain polynomials. Second, we
prove an anti-concentration inequality for polynomials with nonnegative
coefficients which extends the classical Erd\H os--Littlewood--Offord
theorem and improves a theorem of Meka, Nguyen and Vu for polynomials
of this type. As an application, we prove some new anti-concentration bounds for subgraph counts in random graphs.
\end{abstract}

\section{Introduction}

In probabilistic combinatorics (and probability in general), many
arguments are heavily dependent on \emph{concentration} inequalities,
which show that certain random variables are likely to lie in a small
interval around their mean. For example, if $X$ takes the binomial
distribution $\Bin\left(n,p\right)$, which has mean $\mu=np$ and
variance $\sigma^{2}=p\left(1-p\right)n$, then typically $X=\mu\pm O\left(\sigma\right)$.
In the other direction, \emph{anti-concentration} inequalities give
\emph{upper} bounds on the probability that a random variable falls
into a small interval or is equal to a particular value. The \emph{L\'evy
concentration function} $Q_{X}$ of a random variable $X$ is defined
by
\[
Q_{X}\left(t\right):=\sup_{x\in\RR}\Pr\left(x\le X\le x+t\right).
\]
Returning to the example $X\in\Bin\left(n,p\right)$, we can compute
$\Pr\left(X=x\right)=O\left(1/\sigma\right)$ for all $x\in\NN$,
which implies that $Q_{X}\left(t\right)=O\left(\left(t+1\right)/\sigma\right)$.
Bounds of this type can be proved for a variety of different kinds
of random variables. See for example \cite{NV13,Vu17} for
surveys on anti-concentration.

In the above example $X\in\Bin\left(n,p\right)$, in the case where
$p$ is fixed and $n$ is large, the above concentration and anti-concentration
phenomena can both be explained by comparison to a Gaussian distribution.
 More generally, as an important example generalising the binomial
distribution, let $a_{1},\dots,a_{n}$ be a fixed sequence of nonzero
real numbers, let $\xi_{1},\dots,\xi_{n}$ be a sequence of i.i.d.\ $p$-Bernoulli-distributed random variables (meaning that $\Pr\left(\xi_{i}=1\right)=p$,
$\Pr\left(\xi_{i}=0\right)=1-p$) and let $X:=a_{1}\xi_{1}+\dots+a_{n}\xi_{n}$.
If $1\le |a_{i}|=O(1)$ for each $i$, then one can apply a quantitative central limit theorem to compare
$X$ to a Gaussian distribution and show that $\Pr\left(|X-x|<1\right)=O\left(1/\sqrt{n}\right)$ for
any $x\in\RR$ (and therefore $Q_{X}\left(t\right)=O\left(\left(t+1\right)/\sqrt{n}\right)$). Remarkably, the same result holds even when we require no upper bound on the $|a_{i}|$, meaning that $X$
may be far from Gaussian and may not even be particularly well-concentrated.
This is the content of the Erd\H os--Littlewood--Offord theorem\footnote{The Erd\H os--Littlewood--Offord theorem was most famously stated
for the case where $p=1/2$. This case is somewhat simpler because
we can assume that all the $a_{i}$ are positive: changing the sign
of $a_{i}$ only changes the distribution of $X$ by a translation.
However, with modern techniques it is not difficult to deduce a similar
estimate for any fixed $p\in\left(0,1\right)$; see for example \cite[Lemma~A.1]{BVW10}.}~\cite{Erd45}. A precursor to the Erd\H os--Littlewood--Offord
theorem was first used by Littlewood and Offord~\cite{LO43} in their
study of random polynomials more than 50 years ago, and since then,
the theorem and its variants have played an important role in probability,
especially in random matrix theory (see for example \cite{TV09a,TV09b}).

Observe that $a_{1}\xi_{1}+\dots+a_{n}\xi_{n}$ is a linear polynomial
in the $\xi_{i}$, so a natural variation on the Littlewood--Offord
problem is to consider polynomials of higher degree. This problem seems to have been first studied by Rosi\'{n}ski and Samorodnitsky~\cite{RS96} in connection with L\'evy chaos, but it was later popularised by Costello, Tao and Vu~\cite{CTV06}
when they used a quadratic variant of the Littlewood--Offord inequality in their proof of Weiss' conjecture that a random symmetric $\pm1$
matrix typically has full rank. Anti-concentration inequalities
for higher-degree polynomials have since found several applications
in the theory of Boolean functions (see for example \cite{MNV16,RV13}).
The current most general result is due to Meka, Nguyen, and Vu~\cite{MNV16},
and gives a bound
in terms of the \emph{rank} of a polynomial, as follows. For a real multilinear
degree-$d$ polynomial $f$ in $n$ variables, consider the $d$-uniform
hypergraph on the vertex set $\left\{ 1,\dots,n\right\} $ with a
hyperedge $\left\{ i_{1},\dots,i_{d}\right\} $ if the coefficient
of $x_{i_{1}}\dots x_{i_{d}}$ in $f$ has absolute value at least 1. Then the rank of
$f$ is defined to be the largest matching in this hypergraph. For
example, if all $\binom{n}{d}$ degree-$d$ coefficients of $f$ have absolute value at least 1, then
$f$ has rank $\floor{n/d}=\Omega\left(n\right)$. Meka, Nguyen and
Vu proved that for fixed $d\in\NN$ and $p\in\left(0,1\right)$,
any multilinear degree-$d$ rank-$r$ polynomial $f$ in $n$ variables, any
$x\in\RR$, and $\x=\left(\xi_{1},\dots,\xi_{n}\right)\in\Ber\left(p\right)^{n}$, we have
\[
\Pr\left(|f\left(\x\right)-x|<1\right)\le\frac{\left(\log r\right)^{O\left(1\right)}}{\sqrt{r}}.
\]
Up to the polylogarithmic factor, this result is best-possible in multiple
regimes. Consider for example the polynomial $x_{1}\dots x_{d}+x_{d+1}\dots x_{2d}+\dots+x_{\left(r-1\right)d+1}\dots x_{rd}$, with only linearly many nonzero coefficients,
or the polynomial $(x_1+\dots+x_n)^d$ with $\Theta(n^d)$ nonzero coefficients.

Our first result is that if the coefficients of $f$ are nonnegative,
then we can remove the polylogarithmic factor in the Meka--Nguyen--Vu
theorem, even with a slightly looser notion of rank. For a multilinear
polynomial $f$ in $n$ variables, let $r\left(f\right)$ be the largest
matching in the (non-uniform) hypergraph on the vertex set $\left\{ 1,\dots,n\right\} $
with a hyperedge $\left\{ i_{1},\dots,i_{k}\right\} $ if the coefficient
of $x_{i_{1}}\dots x_{i_{k}}$ in $f$ has absolute value at least 1.
\begin{thm}
\label{thm:positive-polynomial-LO}Fix $d\in\NN$ and $p\in\left(0,1\right)$,
let $f$ be a degree-$d$ multilinear polynomial in $n$ variables
with nonnegative coefficients, and let $\x\in\Ber\left(p\right)^{n}$.
Then for any $x\in\RR$, with $r\left(f\right)$ as defined above,
we have
\[
\Pr\left(|f\left(\x\right)-x|<1\right)\le O\left(1/\sqrt{r\left(f\right)}\right).
\]
\end{thm}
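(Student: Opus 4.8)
The plan is to combine a conditioning step — which reduces matters to polynomials all of whose degree-$1$ coefficients are large — with a Sperner/LYM-type anti-concentration estimate, the whole argument resting on the fact that nonnegative coefficients make $f$ nondecreasing on the Boolean cube $\{0,1\}^{n}$.

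Write $r=r(f)$ and fix a maximum matching $S_{1},\dots,S_{r}\subseteq\{1,\dots,n\}$ witnessing it, so the $S_{j}$ are disjoint and nonempty, $|S_{j}|\le d$, and the coefficient of $\prod_{i\in S_{j}}x_{i}$ in $f$ is at least $1$. (We may assume $r\ge1$, as the claimed bound is otherwise trivial after enlarging the implied constant.) Pick a representative $v_{j}\in S_{j}$ for each $j$ and set $V=\{v_{1},\dots,v_{r}\}$. First I would condition on $(\xi_{i})_{i\notin V}$; this turns $f$ into a multilinear polynomial $h$ in the $r$ variables $(\xi_{v_{j}})_{j}$, still with nonnegative coefficients. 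Let $E$ be the (now deterministic) set of indices $j$ with $\xi_{i}=1$ for all $i\in S_{j}\setminus\{v_{j}\}$; unwinding definitions and using nonnegativity, for each $j\in E$ the coefficient of $\xi_{v_{j}}$ in $h$ is at least the coefficient of $\prod_{i\in S_{j}}x_{i}$ in $f$, hence at least $1$. Conditioning $h$ further on $(\xi_{v_{j}})_{j\notin E}$ yields a multilinear polynomial $h'$ in the variables $(\xi_{v_{j}})_{j\in E}$, again with nonnegative coefficients, but now with \emph{all} of its degree-$1$ coefficients at least $1$.

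The core is then the following claim, which I would prove by a Sperner-type argument: if $g$ is a multilinear polynomial on $\{0,1\}^{m}$ with nonnegative coefficients and all degree-$1$ coefficients at least $1$, and $\mathbf{y}\in\Ber(p)^{m}$, then $\Pr(|g(\mathbf{y})-x|<1)=O(1/\sqrt{m})$ uniformly over $x\in\RR$. The point is that multilinearity and nonnegativity force $g(Z\cup\{i\})-g(Z)\ge1$ for every $Z\subseteq\{1,\dots,m\}$ and every $i\notin Z$ (the monomial $x_{i}$ alone contributes at least $1$, everything else at least $0$), so along any maximal chain $\emptyset=C_{0}\subsetneq C_{1}\subsetneq\cdots\subsetneq C_{m}$ of subsets the values $g(C_{0})<g(C_{1})<\cdots$ increase by at least $1$ at each step, whence at most two of them lie in the length-$2$ window $(x-1,x+1)$. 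Averaging this over a uniformly random maximal chain gives $\sum_{k}|\mathcal{G}_{k}|/\binom{m}{k}\le2$, where $\mathcal{G}_{k}$ is the family of $k$-element sets $Y$ with $|g(Y)-x|<1$; and since $\Pr(|g(\mathbf{y})-x|<1)=\sum_{k}\binom{m}{k}p^{k}(1-p)^{m-k}\cdot|\mathcal{G}_{k}|/\binom{m}{k}\le2\max_{k}\binom{m}{k}p^{k}(1-p)^{m-k}=2\max_{k}\Pr(\Bin(m,p)=k)$, the standard bound $\max_{k}\Pr(\Bin(m,p)=k)=O(1/\sqrt{m})$ finishes the claim.

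Applying this claim conditionally to $h'$ (with $m=|E|$) gives $\Pr(|f(\x)-x|<1\mid(\xi_{i})_{i\notin V})=O(1/\sqrt{|E|})$ uniformly in $x$, so it remains to see that $|E|=\Omega(r)$ with overwhelming probability. But $|E|$ is a sum of $r$ independent $\{0,1\}$-valued random variables whose means $p^{|S_{j}|-1}$ are each at least $p^{d-1}$, so a Chernoff bound gives $\Pr(|E|<p^{d-1}r/2)\le e^{-\Omega(r)}$; splitting on this event then yields $\Pr(|f(\x)-x|<1)\le O(1/\sqrt{p^{d-1}r/2})+e^{-\Omega(r)}=O(1/\sqrt{r})$, with all implied constants depending only on $p$ and $d$. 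I expect the one delicate point to be the reduction itself — the idea of conditioning on all but one variable per matching edge, and then checking that the surviving degree-$1$ coefficients genuinely inherit the bound $1$ — after which the Sperner-type heart of the argument is short and robust.
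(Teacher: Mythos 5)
Your proof is correct, and it takes a genuinely different (and arguably cleaner) route than the paper's. The paper also conditions on the variables outside the matching, but it retains \emph{all} $N$ vertices of the matching (with $r\le N\le rd$) and then runs a dynamic process: a uniformly random ordering of the surviving coordinates, a random variable $X_t$ counting the ``almost-activated'' edges at time $t$, a McDiarmid-type concentration bound to show $X_t=\Theta(N)$, and a geometric waiting-time argument to show the process escapes the target window quickly once it enters. You instead condition on everything except one representative $v_j$ per matching edge, observe that on the (deterministic given the conditioning, Chernoff-likely $\Omega(r)$-sized) event set $E$ every surviving degree-$1$ coefficient is at least $1$, and then the increment $g(Z\cup\{i\})-g(Z)\ge 1$ holds \emph{at every step of every chain}, so the family $\{Y:|g(Y)-x|<1\}$ is covered at most twice by every maximal chain and the LYM/permutation bound gives $O(1/\sqrt{|E|})$ directly. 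In effect, your extra round of conditioning converts the paper's ``probabilistic'' increment bound (which holds only for a $\Theta(1)$-fraction of available coordinates at a typical time, forcing the process machinery) into a deterministic one, trading the McDiarmid-plus-geometric-waiting argument for a single Chernoff bound on $|E|$. The price is that you throw away some variables (and hence some randomness), but since the target bound is only $O(1/\sqrt r)$ and $|E|=\Omega(r)$ w.h.p., nothing is lost. Both proofs are descendants of Erd\H{o}s' Sperner-theorem proof of the Littlewood--Offord theorem, as the paper notes, but your reduction makes the Sperner step essentially trivial. One should just double-check, as you flag, that the degree-$1$ coefficient of $\xi_{v_j}$ survives both rounds of substitution; it does, because after substituting the variables outside $V$ the coefficient already includes the term $c_{S_j}\prod_{i\in S_j\setminus\{v_j\}}\xi_i=c_{S_j}\ge 1$ when $j\in E$, and substituting the further $\xi_{v_{j'}}$ with $j'\notin E$ can only add nonnegative contributions to it.
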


We remark that polynomials of Bernoulli random variables with nonnegative
coefficients arise naturally in probabilistic combinatorics. An important example is the number of copies of a fixed graph $H$ in a random graph $\GG(n,p)$ (we will say more about this in \cref{subsec:subgraph-statistics}). Actually
there is also a rich theory of concentration inequalities for these
kinds of polynomials, due primarily to Kim and Vu (see \cite{KV00}
for a survey).

Actually, it seems that for many polynomials that arise in
combinatorics, their polynomial
structure is less important than the fact that they are strongly monotone:
changing some $\xi_{i}$ from 0 to 1 tends to cause a large increase
in the value of $f\left(\x\right)$. Our next result extends \cref{thm:positive-polynomial-LO}
in this setting.
\begin{thm}
\label{thm:rough-LO}Fix $p\in\left(0,1\right)$. Consider
a function $f:\left\{ 0,1\right\} ^{n}\to\RR$, let $\x\in\Ber\left(p\right)^{n}$,
and define the random variables
\[
\Delta_{i}\left(\x\right):=f\left(\xi_{1},\dots,\xi_{i-1},1,\xi_{i+1},\dots,\xi_{n}\right)-f\left(\xi_{1},\dots,\xi_{i-1},0,\xi_{i+1},\dots,\xi_{n}\right).
\]
Suppose for some positive $s$ (which may be a function of $n$) that
$\Pr\left(\Delta_{i}\left(\x\right)\le2s\right)\le n^{-\omega\left(1\right)}$ for all $i\in \{1,\dots,n\}$.
Then, for any $x\in\RR$,
\[
\Pr\left(\left|f\left(\x\right)-x\right|<s\right)\le \max_{t}\binom{n}{t}p^{t}\left(1-p\right)^{n-t}+o\left(1/\sqrt n\right)=O\left(1/\sqrt n\right).
\]
\end{thm}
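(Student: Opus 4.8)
The plan is to prove this by a random‑chain argument reminiscent of the LYM proof of the classical Littlewood--Offord theorem. Identify $\{0,1\}^{n}$ with the power set of $[n]:=\{1,\dots,n\}$, write $I:=(x-s,x+s)$ (an interval of length $2s$), and set $w_{t}:=\binom{n}{t}p^{t}(1-p)^{n-t}$; recalling the standard fact $\max_{t}w_{t}=O(1/\sqrt{n})$, it suffices to show $\Pr(f(\x)\in I)\le\max_{t}w_{t}+o(1/\sqrt{n})$. Let $\sigma$ be a uniformly random permutation of $[n]$ and, independently, $K\sim\Bin(n,p)$; set $C_{\ell}:=\{\sigma(1),\dots,\sigma(\ell)\}$, so that $\emptyset=C_{0}\subsetneq C_{1}\subsetneq\dots\subsetneq C_{n}=[n]$ is a uniformly random maximal chain. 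Since $\Pr(C_{\ell}=A)=1/\binom{n}{\ell}$ whenever $|A|=\ell$, the random set $C_{K}$ has the law of $\{i:\xi_{i}=1\}$, so $f(\x)$ has the same distribution as $f(C_{K})$ and it is enough to bound $\Pr(f(C_{K})\in I)$. Along the chain, for $0\le\ell\le n-1$ put $\delta_{\ell}:=f(C_{\ell+1})-f(C_{\ell})=f(C_{\ell}\cup\{\sigma(\ell+1)\})-f(C_{\ell})$; this equals $\Delta_{\sigma(\ell+1)}$ evaluated at the assignment with the coordinates other than $\sigma(\ell+1)$ set according to $C_{\ell}$.

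Now localise near the mode of $K$. Let $L:=\sqrt{n\log n}$, let $W:=\{\ell\in\{0,\dots,n\}:|\ell-pn|\le L\}$, and let $W'$ be the set of $\ell\in\{0,\dots,n-1\}$ with $\ell,\ell+1\in W$, so $|W'|=O(\sqrt{n\log n})$. A Chernoff bound gives $\Pr(K\notin W)=o(1/\sqrt{n})$. Condition on $\sigma$, and let $\mathcal B$ be the event that $\delta_{\ell}\le 2s$ for some $\ell\in W'$. If $\mathcal B$ fails, then since $W$ is an interval of integers, for any $\ell<\ell'$ in $W$ we have $f(C_{\ell'})-f(C_{\ell})=\sum_{m=\ell}^{\ell'-1}\delta_{m}>2s=|I|$ (each $\delta_m>2s$ as $m\in W'$), so $f(C_{\ell})\in I$ for at most one $\ell\in W$, and hence $\Pr_{K}(f(C_{K})\in I,\ K\in W\mid\sigma)\le\max_{\ell}\Pr(K=\ell)=\max_{t}w_{t}$; if $\mathcal B$ holds we use the trivial bound $1$. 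Taking the expectation over $\sigma$ and adding the contribution of $\{K\notin W\}$,
\[
\Pr(f(\x)\in I)\ \le\ \max_{t}w_{t}\ +\ \Pr(\mathcal B)\ +\ \Pr(K\notin W)\ \le\ \max_{t}w_{t}\ +\ \Pr(\mathcal B)\ +\ o(1/\sqrt{n}).
\]

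It remains to show $\Pr(\mathcal B)=o(1/\sqrt{n})$; since $|W'|$ is polynomial in $n$, by a union bound it is enough to show $\Pr_{\sigma}(\delta_{\ell}\le 2s)=n^{-\omega(1)}$ for each fixed $\ell\in W'$. Under a uniform $\sigma$, the pair $(\sigma(\ell+1),C_{\ell})$ is distributed as ``$i$ uniform in $[n]$ and $C_{\ell}$ a uniform $\ell$-subset of $[n]\setminus\{i\}$''; writing $q_{i,\ell}$ for the probability that $\Delta_{i}$ evaluated at (the assignment of) a uniformly random $\ell$-subset of $[n]\setminus\{i\}$ is at most $2s$, we get $\Pr_{\sigma}(\delta_{\ell}\le 2s)=\frac{1}{n}\sum_{i}q_{i,\ell}$. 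On the other hand, conditioning on the number of ones among the coordinates different from $i$,
\[
n^{-\omega(1)}\ \ge\ \Pr(\Delta_{i}(\x)\le 2s)\ =\ \sum_{m}\Pr(\Bin(n-1,p)=m)\,q_{i,m}\ \ge\ \Pr(\Bin(n-1,p)=\ell)\,q_{i,\ell},
\]
so $q_{i,\ell}\le n^{-\omega(1)}/\Pr(\Bin(n-1,p)=\ell)$. Here the choice of $L$ pays off: for $\ell\in W'$ we have $|\ell-(n-1)p|=O(\sqrt{n\log n})$, and standard local estimates for the binomial distribution give $\Pr(\Bin(n-1,p)=\ell)\ge n^{-O(1)}$ in that range. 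Hence $q_{i,\ell}\le n^{-\omega(1)}$, so $\Pr_{\sigma}(\delta_{\ell}\le 2s)\le n^{-\omega(1)}$, and $\Pr(\mathcal B)\le|W'|\cdot n^{-\omega(1)}=n^{-\omega(1)}=o(1/\sqrt{n})$. Plugging this into the previous display and recalling $\max_{t}w_{t}=O(1/\sqrt{n})$ completes the proof.

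The crux is calibrating the window width $L$. The hypothesis only supplies a super‑polynomially small --- but not necessarily super‑$n^{\log n}$‑small --- bound on the small‑derivative probabilities, and in the final step we divide it by the binomial point probability $\Pr(\Bin(n-1,p)=\ell)$, which decays faster than every polynomial once $\ell$ is more than $\omega(\sqrt{n\log n})$ away from $pn$. A wider window would destroy the bound on $\Pr(\mathcal B)$, while a narrower one would make $\Pr(K\notin W)$ exceed $o(1/\sqrt{n})$; taking $L\asymp\sqrt{n\log n}$ is exactly what keeps both error terms $o(1/\sqrt{n})$.
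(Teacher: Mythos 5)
Your proof is correct, and it takes essentially the same approach as the paper: a uniformly random maximal chain in $\{0,1\}^{n}$ (via a random permutation), localisation to a window of width $\Theta(\sqrt{n\log n})$ around $pn$, and a change-of-measure argument showing that, with probability $1-n^{-\omega(1)}$, the increments along the chain exceed $2s$ throughout the window, so the path $f(C_{\ell})$ can land in an interval of length $2s$ at most once. The bookkeeping differs slightly: the paper introduces the random variable $Y$ (number of hits in the window) and squeezes $\E Y$ between $\alpha^{-1}\Pr(|f(\x)-x|<s)-o(1)$ and $1+o(1)$, whereas you decouple the chain from an independent $K\sim\Bin(n,p)$, condition on $\sigma$, and read off the bound $\max_{t}w_{t}$ directly from the local binomial probabilities of $K$ — a somewhat cleaner presentation of the same computation. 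One minor point: your bad event $\mathcal B$ only concerns the increment $\delta_{\ell}=\Delta_{\sigma(\ell+1)}(\x^{\ell})$ about to be flipped, whereas the paper's event $\mathcal E$ demands $\Delta_{i}(\x^{r})\ge 2s$ for \emph{all} $i$ along the window; your weaker event suffices, and in fact matches what the "at most one hit" argument actually uses.
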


We will prove \cref{thm:positive-polynomial-LO,thm:rough-LO} in \cref{sec:positive-polynomial-LO}.
Both proofs are quite similar (and quite short), and proceed along
similar lines to Erd\H os' original proof of the Erd\H os--Littlewood--Offord
theorem: the events in question are ``almost''
antichains in the $n$-dimensional Boolean lattice. We remark that the main term of \cref{thm:rough-LO} is best-possible: consider the case $f(\x)=\xi_1+\dots +\xi_n$, with any $s<1/2$.

The above discussion concerns the regime where $p$ is fixed
and $n$ is large, in which case we expect anti-concentration behaviour
to be ``Gaussian-like''. However, if $p$ is allowed to be a decaying
function of $n$, then we cannot hope for bounds as strong as $O\left(1/\sqrt{n}\right)$.
Indeed, consider the case $X\in\Bin\left(n,p\right)$ with $p=1/n$.
The Poisson limit theorem (see for example \cite[p.~64]{Kal}) shows
that $X$ is asymptotically Poisson, which implies that $\Pr\left(X=x\right)\le1/e+o\left(1\right)$
for each $x\in\NN$. To our knowledge there is not yet a theory of
anti-concentration that generalises this fact, though a recent conjecture of
Alon, Hefetz, Krivelevich and Tyomkyn~\cite{AHKT} hints at the existence
and utility of such a theory. We discuss this in the next subsection, and in \cref{subsec:poisson} we will present some general ``Poisson-type'' inequalities for certain polynomials.

\subsection{Edge-statistics in graphs\label{subsec:edge-statistics}}

For an $n$-vertex graph $G$ and some $0\le k\le n$, consider a
uniformly random set of $k$ vertices $A\subseteq V\left(G\right)$
and define the random variable $X_{G,k}:=e\left(G\left[A\right]\right)$
to be the number of edges induced by the random $k$-set $A$. Motivated
by connections to graph inducibility\footnote{Roughly speaking, the \emph{inducibility} of a graph $H$ measures
the maximum number of induced copies of $H$ a large graph can have.
This notion was introduced in 1975 by Pippenger and Golumbic~\cite{PG75},
and has enjoyed a recent surge of interest; see for example \cite{BHLP16,HT,Yus,KNV}.}, Alon, Hefetz, Krivelevich and Tyomkyn~\cite{AHKT} recently initiated
the study of the anti-concentration of $X_{G,k}$, and made the following three conjectures.
\begin{conjecture}[{\cite[Conjecture~6.2]{AHKT}}]
\label{conj:sqrt}Suppose $k\to\infty$ and $n/k\to\infty$, and consider
$\ell$ satisfying $\ell=\Omega\left(k^{2}\right)$ and $\binom{k}{2}-\ell=\Omega\left(k^{2}\right)$.
Then $\Pr\left(X_{G,k}=\ell\right)=O\left(1/\sqrt{k}\right)$.
\end{conjecture}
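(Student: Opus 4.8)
The plan is to reduce, via an exchange (``resampling'') trick, to anti-concentration of a bounded-degree polynomial of i.i.d.\ Bernoulli random variables, to which \cref{thm:positive-polynomial-LO} (or the Erd\H os--Littlewood--Offord theorem) applies, and to dispatch the exceptional ``rigid'' graphs by a separate structural argument. First I would note that, by the hypotheses on $\ell$ together with the fact that $X_{G,k}$ concentrates around its mean with fluctuations $O(k^{3/2})$, we may assume $G$ has edge density bounded away from $0$ and $1$. Next, fix a small constant $\delta>0$, set $m:=\delta k$, and sample the uniform $k$-set $A$ as $T\cup S$, where $T$ is a uniformly random $(k-m)$-set, $W$ a uniformly random $2m$-set in $V(G)\setminus T$, $M=\{\{a_i,b_i\}\}_{i=1}^m$ a uniformly random perfect matching of $W$, the $\eta_i\sim\Ber(1/2)$ are independent, and $S$ is the transversal of $M$ with $a_i\in S\iff\eta_i=1$; a short computation shows $A$ is uniform over $k$-subsets of $V(G)$. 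Using $e(G[A])=e(G[T])+\sum_{v\in S}\deg_T(v)+e(G[S])$ and conditioning on $(T,W,M)$, we get $e(G[A])=c_0+h(\eta)$ where $c_0=c_0(T,W,M)$ is constant and $h$ is a multilinear polynomial of degree at most $2$ in $\eta$: the term $\sum_{v\in S}\deg_T(v)$ contributes a linear form with coefficients $\deg_T(a_i)-\deg_T(b_i)$ (up to sign), and $e(G[S])$ contributes the degree-$\le 2$ remainder, recording which chosen endpoints of matched pairs are adjacent. It then suffices to prove $\Pr(h(\eta)=\ell-c_0\mid T,W,M)=O(1/\sqrt{m})=O(1/\sqrt{k})$ for all but an $O(1/\sqrt{k})$-fraction of triples $(T,W,M)$.

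When $G$ contains a clique or an independent set of size at least $2m$ I would place the window $W$ inside it; then $e(G[S])$ takes the same value ($\binom{m}{2}$ or $0$) on every transversal $S$, so $h$ is a linear form, and orienting each matched pair so that $\deg_T(a_i)\ge\deg_T(b_i)$ makes its coefficients nonnegative. \cref{thm:positive-polynomial-LO} (with $d=1$) then yields $\Pr(h(\eta)=\ell-c_0\mid T,W,M)=O(1/\sqrt{r})$ with $r=\#\{i:\deg_T(a_i)\ne\deg_T(b_i)\}$, and it remains to check that $r=\Omega(m)$ for typical $(T,W,M)$. This can fail only if, for a typical core $T$, almost all vertices of $W$ have the same number of $T$-neighbours, which forces almost all pairs of vertices of $G$ to have nearly identical neighbourhoods --- i.e.\ $G$ is close to a blow-up --- and this falls under the next case.

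For the remaining graphs I would apply Szemer\'edi's regularity lemma and write $G$, up to a sparse ``error graph'' and boundedly many irregular pairs, as a blow-up of a bounded reduced graph with densities $\rho_{ij}\in[0,1]$ (including, after one further refinement, the densities internal to each cluster). If some such density is bounded away from $0$ and $1$, then the contribution of that regular piece to $e(G[A])$ has standard deviation $\Theta(k)$, and a local central limit theorem --- using the pseudorandomness of that piece to bound the relevant characteristic function --- gives $\Pr(X_{G,k}=\ell)=O(1/k)=o(1/\sqrt{k})$. Otherwise all these densities are essentially $0$ or $1$, so $G$ is, up to a separately-controlled error term, a blow-up of a bounded graph whose parts are cliques or independent sets; then $e(G[A])$ is (essentially) a fixed quadratic form in the cluster sizes $a_i=|A\cap V_i|$, the vector $(a_i)$ is multivariate hypergeometric, and the hypotheses $\ell=\Omega(k^2)$, $\binom{k}{2}-\ell=\Omega(k^2)$ exclude the degenerate possibilities in which this form is constant on the support, so a local central limit theorem for $(a_i)$ gives $O(1/\sqrt{k})$.

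Combining the cases and absorbing the exceptional contributions gives $\Pr(X_{G,k}=\ell)=O(1/\sqrt{k})$. I expect the main obstacle to be the interaction between the degree-$2$ cross term $e(G[S])$ and this structural dichotomy: in contrast to the setting of \cref{thm:positive-polynomial-LO}, the polynomial $h$ has in general no nonnegative-coefficient representation, and only the Meka--Nguyen--Vu bound --- with its polylogarithmic loss --- applies to mixed-sign degree-$2$ polynomials, so the clique/independent-set window above and the separate treatment of the pseudorandom graphs are there precisely to avoid ever needing such an inequality. Making the dichotomy genuinely exhaustive, controlling the regularity and error-graph contributions finely enough to pin down the exact value $\ell$ (rather than merely a window of length $o(k^2)$), and checking that the hypotheses on $\ell$ are invoked exactly where the argument would otherwise break, is where essentially all the work lies.
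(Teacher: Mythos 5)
This statement is a conjecture of Alon, Hefetz, Krivelevich and Tyomkyn which the paper records for context but does \emph{not} prove: what the paper establishes is only a Bernoulli-model variant at window scale $k^{r-1}$ (\cref{prop:bernoulli-sqrt}), and the concluding remarks explicitly say that \cref{conj:sqrt} in its stated form would follow from a polylog-free version of the Meka--Nguyen--Vu inequality, which remains open. So you are attempting a statement the paper itself leaves unresolved, and indeed your argument has gaps at exactly the points that have resisted proof.

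The central gap is in the clique/independent-set case. You write that you would ``place the window $W$ inside'' a clique or independent set, but $W$ is a uniformly random $2m$-subset of $V(G)\setminus T$; conditioning it to lie inside a fixed clique destroys the uniformity of $A$, and for a graph whose largest homogeneous set has size $O(k)$ the probability that a random $W$ lands inside one is vanishingly small. The resampling identity requires you to bound $\Pr\left(h(\eta)=\ell-c_0\mid T,W,M\right)$ for \emph{almost all} outcomes of $(T,W,M)$, not for constructed ones; for a typical $W$ the quadratic term $e(G[S])$ does not collapse to a constant, and you are back to a mixed-sign degree-$2$ polynomial for which, as you yourself note, only the polylog-lossy Meka--Nguyen--Vu bound is available. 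Your proposed dichotomy does not rescue this: having $\deg_T(a_i)\approx\deg_T(b_i)$ for most matched pairs and most $T$ does \emph{not} force $N_G(a_i)\approx N_G(b_i)$ --- a graph can be (near-)degree-regular yet pseudorandom, so ``close to a blow-up'' does not follow --- and the regularity-lemma branch invokes a local central limit theorem for edge statistics over pseudorandom pieces that is not an off-the-shelf result and is itself essentially the anti-concentration assertion you are trying to establish. You already flag this (``where essentially all the work lies''), and that assessment is accurate: the interaction between the cross term and the structural dichotomy is exactly the unresolved difficulty, which is why the paper proves only the coarser Bernoulli-window statement and leaves \cref{conj:sqrt} open.
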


\begin{conjecture}[{\cite[Conjecture~6.1]{AHKT}}]
\label{conj:omega}Suppose $k\to\infty$ and $n/k\to\infty$, and
consider $\ell$ satisfying $\ell=\omega\left(k\right)$ and $\binom{k}{2}-\ell=\omega\left(k\right)$.
Then $\Pr\left(X_{G,k}=\ell\right)=o\left(1\right)$.
\end{conjecture}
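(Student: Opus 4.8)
The plan is to split off an ``unbalanced'' case, handled by Markov's inequality, from a ``balanced'' case, where I reduce to anti-concentration of a nonnegative-coefficient quadratic polynomial sampled at a uniformly random subset. Recall $\E[X_{G,k}]=e(G)\cdot\tfrac{k(k-1)}{n(n-1)}$. If $e(G)=O(n^2/k)$ then $\E[X_{G,k}]=O(k)=o(\ell)$, so Markov's inequality gives $\Pr(X_{G,k}=\ell)\le\Pr(X_{G,k}\ge\ell)\le\E[X_{G,k}]/\ell=o(1)$; symmetrically, since $X_{\overline G,k}=\binom k2-X_{G,k}$ and $\binom k2-\ell=\omega(k)$, the same conclusion holds if $e(\overline G)=O(n^2/k)$. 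So we may assume the \emph{balanced} case $e(G)=\omega(n^2/k)$ and $e(\overline G)=\omega(n^2/k)$.

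In the balanced case, observe that $X_{G,k}=\sum_{\{u,v\}\in E(G)}\one_A(u)\one_A(v)$ is a \emph{purely quadratic} polynomial in the indicator vector $\one_A$ of the random $k$-set; in the sparse regime $k/n\to0$ a matching of quadratic monomials carries little information (in a size-$r$ matching of disjoint edges, only about $r(k/n)^2$ are fully contained in $A$), so I would first expose the set in two stages to manufacture a linear part. Set $m:=\lceil k/2\rceil$, let $C$ be a uniformly random $(k-m)$-subset of $V(G)$, and, conditionally on $C$, let $R$ be a uniformly random $m$-subset of $V(G)\setminus C$, so $A:=C\sqcup R$ is uniform among $k$-subsets. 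Writing $\one_R\in\{0,1\}^{V(G)\setminus C}$ for the indicator of $R$,
\[
X_{G,k}=e(G[C])+\sum_{u\in V(G)\setminus C}\deg_C(u)\,\one_R(u)+\sum_{\substack{\{u,v\}\in E(G)\\ u,v\notin C}}\one_R(u)\one_R(v)=:e(G[C])+P_C(\one_R),
\]
where for each fixed $C$ the polynomial $P_C$ is multilinear of degree $\le2$ with nonnegative integer coefficients, and $\one_R$ is the indicator of a uniformly random $m$-subset of the $(n-k+m)$-element ground set $V(G)\setminus C$. Hence $\Pr(X_{G,k}=\ell)=\E_C[\Pr(P_C(\one_R)=\ell-e(G[C])\mid C)]$, so it suffices to show $\sup_y\Pr(P_C(\one_R)=y\mid C)=o(1)$ for all $C$ outside an event of probability $o(1)$.

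The linear coefficients of $P_C$ are the $\deg_C(u)$, so the singleton monomials $\{u\}$ with $\deg_C(u)\ge1$ already form a matching (in the sense of the definition preceding \cref{thm:positive-polynomial-LO}) of size $t_C:=\#\{u\in V(G)\setminus C:N_G(u)\cap C\ne\emptyset\}$. Summing $\Pr(u\notin C,\,N_G(u)\cap C\ne\emptyset)\ge\tfrac14\min(1,\deg_G(u)|C|/n)$ over $u$ and using $e(G)=\omega(n^2/k)$ together with $|C|=\Theta(k)$, a short case analysis (according to whether the bulk of the degree mass sits on vertices of degree $\lesssim n/k$) gives $\E[t_C]=\omega(n/k)$, which a routine concentration estimate upgrades to $t_C=\omega(n/k)$ with probability $1-o(1)$. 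Since $\one_R$ samples an $m$-subset with $m=\Theta(k)$ from a ground set of size $\Theta(n)$, the ``effective'' number of active singleton monomials is of order $t_C\cdot(k/n)=\omega(1)$, so the anti-concentration inequality for nonnegative-coefficient multilinear polynomials --- in the form appropriate to this sparse, uniform-subset setting --- gives $\sup_y\Pr(P_C(\one_R)=y\mid C)=o(1)$, as required.

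The main obstacle is exactly this last invocation. \cref{thm:positive-polynomial-LO} is stated only for $\Ber(p)$ with $p$ \emph{fixed}, whereas here the sampling density $m/(n-k+m)=\Theta(k/n)$ tends to $0$, and the na\"ive workaround --- conditioning a $\Ber(k/n)$-vector on having weight exactly $m$ --- costs a factor $\Theta(\sqrt k)$ in the bound, which is fatal. What is needed instead is the ``Poisson-type'' counterpart of the nonnegative-coefficient Littlewood--Offord inequality for uniformly random subsets, i.e.\ (a uniform-measure version of) the inequalities developed in \cref{subsec:poisson}, or a direct antichain-style proof in the spirit of \cref{thm:positive-polynomial-LO} adapted to the uniform measure; the Poisson rather than Gaussian nature of this regime is precisely why one should only expect the $o(1)$ bound here, rather than the $O(1/\sqrt k)$ of \cref{conj:sqrt}. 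A secondary point is transferring the lower bound on $t_C$ and its concentration cleanly through the two-stage exposure, but that is routine.
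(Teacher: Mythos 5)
This statement is a conjecture of Alon, Hefetz, Krivelevich and Tyomkyn, which the paper does not prove itself; it only cites the proof by Kwan, Sudakov and Tran \cite{KST}. So there is no internal proof to compare against, and what follows is an assessment of the proposal on its own terms.

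The case split (Markov's inequality for $e(G)=O(n^2/k)$ or $e(\overline G)=O(n^2/k)$, then a two-stage vertex exposure $A=C\sqcup R$ in the balanced case) is sensible and the Markov step is correct. You honestly flag that the final anti-concentration step needs a nonnegative-coefficient Littlewood--Offord inequality for a uniformly random $m$-subset in the sparse regime, and that neither \cref{thm:positive-polynomial-LO} (fixed $p$) nor \cref{thm:poisson-nonnegative} ($1/e+o(1)$, not $o(1)$) supplies it, and that the na\"ive Bernoulli-to-uniform conversion costs $\Theta(\sqrt k)$. That diagnosis is right, but the gap is actually worse than a missing analogue: the tool you want does not exist in the form you invoke it. An anti-concentration bound for a nonnegative-coefficient polynomial sampled at a uniform $m$-subset cannot be controlled by the size $t_C$ of the singleton matching alone. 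Concretely, if the nonzero linear coefficients $\deg_C(u)$ are all equal to $1$ and the set $U$ of such $u$ is all (or nearly all) of $V\setminus C$, then $\sum_{u}\deg_C(u)\one_R(u)=|R\cap U|$ is deterministic (or nearly so) because $|R|=m$ is fixed; the polynomial $P(\one_S)=|S|$ with rank $N$ already shows that, under the uniform-$m$ measure, rank $\omega(1)$ (even rank $\omega(n/k)$) gives \emph{no} anti-concentration. This is a qualitative difference from the $\Ber(p)$ setting, where the number of active matching elements is itself random and a rank/matching hypothesis suffices. Your heuristic ``$t_C\cdot(k/n)=\omega(1)$ active singletons, hence $o(1)$'' silently assumes that randomness in the count of active singletons translates into anti-concentration of the value, which fails when the coefficients are equal and $|R|$ is fixed.

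To repair the balanced case one must exploit something beyond $t_C$: either the \emph{spread} of the degree-into-$C$ sequence $(\deg_C(u))_{u\notin C}$ (after centering by $|R|=m$, the relevant quantity is the variance of $\sum_u(\deg_C(u)-\bar d)\one_R(u)$, not the count of nonzero $\deg_C(u)$), or the quadratic part $e(G[R])$, or both. You would then also need to show that in the balanced case at least one of these sources of randomness is nondegenerate with probability $1-o(1)$ over $C$, which is not addressed by the $t_C=\omega(n/k)$ computation. Likewise, the ``direct antichain-style proof adapted to the uniform measure'' you gesture at does not straightforwardly give $o(1)$: the LYM/Sperner bound for an antichain, conditioned on $|R\cap U|=j$, yields only the trivial bound $\le 1$ rather than $O(1/\sqrt{\operatorname{Var}|R\cap U|})$. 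So this is a genuine missing idea, not merely a missing citation.
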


\begin{conjecture}[{\cite[Conjecture~1.1]{AHKT}}]
\label{conj:1/e}Suppose $k\to\infty$ and $n$ grows sufficiently rapidly in terms of $k$. Then
for all $0<\ell<\binom{k}{2}$ we have $\Pr\left(X_{G,k}=\ell\right)\le1/e+o\left(1\right)$.
\end{conjecture}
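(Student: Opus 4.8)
The plan is to deduce \cref{conj:1/e} from the Poisson-type anti-concentration inequality of \cref{subsec:poisson}, after two reductions that exploit the freedom to take $n$ as large as we wish in terms of $k$. First I would trade the uniformly random $k$-set for $k$ independent uniformly random vertices $v_{1},\dots,v_{k}\in V(G)$, and set $Y:=\sum_{i<j}\one[v_{i}v_{j}\in E(G)]$. As soon as $n\gg k^{2}$ the chance that $v_{1},\dots,v_{k}$ are not pairwise distinct is $O(k^{2}/n)=o(1)$, and conditionally on their being distinct $\{v_{1},\dots,v_{k}\}$ is a uniformly random $k$-set; hence $\Pr(X_{G,k}=\ell)\le\Pr(Y=\ell)+o(1)$, and it is enough to bound $\Pr(Y=\ell)$.

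Second, I would dispose of the two extreme density regimes by a first moment bound. Since $\E[Y]=\binom{k}{2}\cdot 2e(G)/n^{2}$, if $e(G)=o(n^{2}/k^{2})$ then $\E[Y]=o(1)$, so $\Pr(Y=\ell)\le\E[Y]=o(1)$ for every $\ell\ge1$, which suffices because $\ell>0$. Via the symmetry $X_{G,k}=\binom{k}{2}-X_{\overline{G},k}$ and the hypothesis $\ell<\binom{k}{2}$, the same argument applied to $\overline{G}$ and the value $\binom{k}{2}-\ell$ disposes of the case $\binom{n}{2}-e(G)=o(n^{2}/k^{2})$. So from here on I may assume that both $e(G)$ and $\binom{n}{2}-e(G)$ are $\Omega(n^{2}/k^{2})$.

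For the remaining ``balanced'' case the idea is to expose $v_{1},\dots,v_{k}$ one vertex at a time and write $Y=\sum_{j=1}^{k}D_{j}$, where $D_{j}:=|N(v_{j})\cap\{v_{1},\dots,v_{j-1}\}|\ge 0$ counts the back-edges created by $v_{j}$ — or, more flexibly, to condition on all but a slowly growing number $m=m(k)$ of the vertices and regard $Y$, up to an additive constant determined by the conditioning, as a function of the remaining $m$ independent uniform vertices. Within this process one tries to exhibit a large (size $\omega(1)$) family of approximately independent events, each of probability $o(1)$, whose occurrence forces $Y$ up by a positive amount, and then to verify that a suitably conditioned version of $Y$ built from such a family meets the hypotheses of the Poisson-type inequality of \cref{subsec:poisson}, whose conclusion is exactly that its point probabilities are at most $1/e+o(1)$. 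If no such large family can be found, then $G$ must be structurally degenerate — a bounded number of high-degree vertices carry most of the edges, or $G$ lies within edit distance $o(n^{2})$ of a graph of bounded complexity such as a disjoint union of a clique and an independent set — and in each such case $Y$ is, after a deterministic shift and an $o(1)$ total-variation error, an explicit low-complexity statistic of the chosen set (a binomial coefficient of a hypergeometric count, a product of two hypergeometric counts, and so on), whose point probabilities a direct computation bounds by $O(1/\sqrt{k})=o(1)$, comfortably below $1/e$.

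I expect the crux to be this dichotomy and the estimates around it, for two reasons. The increments $D_{j}$ are correlated through the shared vertices $v_{1},\dots,v_{j-1}$, so extracting a genuinely (almost) independent subfamily of rare, monotone ``switches'' and controlling the error incurred by treating them as independent is a Stein--Chen-type task whose difficulty is governed by the degree sequence of $G$ — a single vertex of enormous degree already ruins a naive Poisson approximation of $Y$ and must be peeled off first. And one must show that after conditioning on whatever degenerate part of $G$ is peeled off, the surviving randomness either spreads out (yielding $o(1)$) or collapses, up to a deterministic shift, to a Poisson distribution of parameter at most $1$ (yielding at most $1/e$): these are the only two possibilities, and capturing the second is precisely what the Poisson-type inequality is for. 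That $1/e$ is the correct constant — and that only a shifted Poisson of parameter at most $1$ can beat $o(1)$ — is confirmed by taking $G$ quasirandom of edge density $1/\binom{k}{2}$, which makes $Y$ asymptotically $\Po(1)$, so that $\Pr(Y=1)\to1/e$.
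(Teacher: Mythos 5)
The paper does not prove this conjecture itself: it was established independently by Fox and Sauermann~\cite{FS} and by Martinsson, Mousset, Noever and Truji\'c~\cite{MMNT}, and the present paper merely cites those works. What the paper's Poisson-type inequality (\cref{thm:poisson-nonnegative}) actually yields is only the weaker \emph{Bernoulli version} (\cref{prop:bernoulli-1-e}), in which the uniformly random $k$-set is replaced by a binomial random set $A^{\Ber}$ with each vertex included independently; in that model the edge count really is a nonnegative-coefficient polynomial of independent $\Ber(k/n)$ variables, so the theorem applies directly. Crucially, the authors state explicitly, both when they introduce the problem and again in the concluding remarks, that they were unable to bridge from this Bernoulli model back to the original conjecture: ``we were not able to find a polynomial anti-concentration inequality that implies \cref{conj:1/e}.'' So the route you are sketching is precisely the one the authors tried and abandoned.

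Your first two reductions are correct and standard: replacing the random $k$-set by $k$ iid uniform vertices costs $O(k^2/n)=o(1)$ since $n$ may grow arbitrarily fast in $k$, and the first-moment estimate together with complementation disposes of $e(G)=o(n^2/k^2)$ and its complement. The gap is in the third step, which is where the entire difficulty lives. To invoke \cref{thm:poisson-nonnegative} you need to exhibit $Y$, or a suitable conditioning of $Y$, as a polynomial with nonnegative coefficients of \emph{independent} $\Ber(p)$ random variables with $p\to 0$. But after conditioning on all but $m$ of the vertices, the remaining randomness is in $m$ independent \emph{uniform} vertex choices, not in independent Bernoulli coordinates, and the pairwise edge indicators $\one[v_iv_j\in E(G)]$ share vertices and are not independent. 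Your plan --- extract a large ``approximately independent'' family of rare monotone switches, and if none exists declare $G$ structurally degenerate and finish by direct computation --- is a program, not a proof: you neither define the dichotomy, nor quantify the independence error (which is a genuine Stein--Chen-type estimate governed by the degree sequence of $G$), nor show that the surviving randomness satisfies the hypotheses of the theorem. This is exactly the part that the paper's authors could not carry out, and the actual proofs in \cite{FS,MMNT} ``proceeded along rather different lines,'' as the paper remarks. As a small secondary point, your final claim that only a shifted Poisson of parameter at most $1$ can beat $o(1)$ is also false: for instance $\Pr(\Po(2)=2)=2e^{-2}\approx 0.27$ is bounded away from $0$.
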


There has already been a lot of progress on these conjectures. Kwan,
Sudakov and Tran~\cite{KST} proved \cref{conj:omega} and proved that
in the setting of \cref{conj:sqrt}, we have $\Pr\left(X_{G,k}=\ell\right)=\left(\log k\right)^{O\left(1\right)}/\sqrt{k}$.
Combining the results of \cite{KST} with several new ideas, \cref{conj:1/e}
was then proved, independently by Fox and Sauermann~\cite{FS} and
by Martinsson, Mousset, Noever and Truji\'c~\cite{MMNT}.

Actually, Kwan, Sudakov and Tran's work on \cref{conj:omega,conj:sqrt}
involved an application of the Meka--Nguyen--Vu polynomial anti-concentration
inequality mentioned earlier. To illustrate the connection between
polynomial anti-concentration and this problem, instead of the random
size-$k$ subset $A\subseteq V\left(G\right)$, consider the closely
related random subset $A^{\Ber}\subseteq V\left(G\right)$, where
each of the $n$ vertices is included with probability $k/n$ independently.
Then, $X_{G,k}^{\Ber}:=e\left(G\left[A^{\Ber}\right]\right)$ can be
interpreted as a quadratic polynomial of a $\Ber\left(k/n\right)^{n}$-distributed
random vector, whose coefficients correspond to edges in the graph.

As our first application of our new anti-concentration theorems, we observe that \cref{thm:rough-LO} can be used to prove a stronger ``Bernoulli version'' of \cref{conj:sqrt},
in the more general setting of hypergraphs.
\begin{prop}
\label{prop:bernoulli-sqrt}Fix $r\in\NN$, suppose $k\to\infty$
and $n\ge2k$, and consider any $\ell$ satisfying $\ell=\Omega\left(k^{r}\right)$.
Then for any $r$-uniform hypergraph $G$, we have $\Pr\left(\left|X_{G,k}^{\Ber}-\ell\right|\le k^{r-1}\right)=O\left(1/\sqrt{k}\right)$.
\end{prop}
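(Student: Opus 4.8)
The plan is to combine a crude first-moment bound with the random-maximal-chain argument underlying \cref{thm:rough-LO}. Write $X:=X_{G,k}^{\Ber}=e(G[A^{\Ber}])$ and $p:=k/n\le 1/2$; identifying $A^{\Ber}$ with $\x\in\Ber(p)^{n}$, we have $X=\sum_{e\in E(G)}\prod_{i\in e}\xi_{i}$, a multilinear degree-$r$ polynomial with $0/1$ coefficients, and we set $W:=[\ell-k^{r-1},\ell+k^{r-1}]$. The case $r=1$ is an instance of the weighted Erd\H os--Littlewood--Offord theorem, so assume $r\ge2$; then $k^{r-1}=o(\ell)$, so $\ell-k^{r-1}=(1-o(1))\ell$.

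First I would dispose of the ``sparse'' case. If $\E X=e(G)p^{r}\le k^{r-1/2}$ then, since $\ell-k^{r-1}=\Omega(k^{r})$, Markov's inequality gives $\Pr(X\in W)\le\Pr(X\ge\ell-k^{r-1})\le\E X/(\ell-k^{r-1})=O(1/\sqrt k)$, as desired, so assume $e(G)p^{r}>k^{r-1/2}$. Next comes the one genuinely new ingredient needed on top of \cref{thm:rough-LO}: the deterministic inequality $X=e(G[A^{\Ber}])\le\binom{|A^{\Ber}|}{r}$. It forces $\{X\ge\ell-k^{r-1}\}\subseteq\{|A^{\Ber}|\ge m_{*}\}$, where $m_{*}:=\min\{m:\binom mr\ge\ell-k^{r-1}\}$; since $\ell=\Omega(k^{r})$ and $k^{r-1}=o(\ell)$ we get $m_{*}=(1+o(1))(r!\,\ell)^{1/r}=\Omega(k)$ and moreover $\ell=\binom{m_{*}}{r}-O(k^{r-1})$, so $W$ is contained in a window of width $O(k^{r-1})$ around $\binom{m_{*}}{r}$. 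As $|A^{\Ber}|\sim\Bin(n,p)$ has mean $k$, a Chernoff bound gives $\Pr(|A^{\Ber}|\ge k+k^{2/3})\le e^{-\Omega(k^{1/3})}$; hence if $m_{*}\ge k+k^{2/3}$ then $\Pr(X\in W)=o(1/\sqrt k)$ and we are done, and otherwise $m_{*}=\Theta(k)$.

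For the remaining case $m_{*}=\Theta(k)$ I would run the argument from the proof of \cref{thm:rough-LO}: realise $A^{\Ber}$ as the set $S_{T}$ of the first $T$ elements of a uniformly random ordering of $[n]$, with $T\sim\Bin(n,p)$ independent; for each fixed ordering the values $e(G[S_{m}])$ are non-decreasing in $m$, so the levels with $e(G[S_{m}])\in W$ form an interval, and since the point probabilities of $\Bin(n,p)$ are $O(1/\sqrt k)$ (here $np(1-p)\ge k/2$ is used), we obtain
\[
\Pr(X\in W)\le O(1/\sqrt k)\cdot\sum_{m\ge m_{*}}\Pr\big(e(G[S_{m}])\in W\big),
\]
where now $S_{m}$ is a uniformly random $m$-set and the terms $m<m_{*}$ vanish by the deterministic bound. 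So everything reduces to showing $\sum_{m\ge m_{*}}\Pr(e(G[S_{m}])\in W)=O(1)$ --- i.e.\ that a random maximal chain meets $W$ at only $O(1)$ levels on average, which is the precise sense in which the events here are ``almost antichains''.

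This last estimate is the step I expect to be hardest. The natural route is to pass to the complement hypergraph $\bar G$: the event $e(G[S_{m}])\in W$ is the event that the induced non-edge count $e(\bar G[S_{m}])=\binom mr-e(G[S_{m}])$ lies in an interval of width $O(k^{r-1})$ whose centre $\binom mr-\ell$ increases by $\binom m{r-1}=\Theta(k^{r-1})$ as $m$ increments (using $m=\Theta(k)$), so consecutive such windows are essentially disjoint. One then splits on the density of $\bar G$: if $\bar G$ is sparse, only $O(1)$ levels $m$ are feasible and each contributes $O(1)$; if $\bar G$ is dense, the counts $e(\bar G[S_{m}])$ concentrate (via a lower-tail bound for induced subhypergraph counts in random vertex subsets), so for $m$ past $m_{*}$ the required small value of $e(\bar G[S_{m}])$ is a large-deviation event and the corresponding terms sum to $o(1)$, leaving only the dominant level $m\approx m_{*}$; the intermediate density range is handled by applying the same dichotomy to $\bar G$ in place of $G$. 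Note that the hypothesis $\ell=\Omega(k^{r})$ enters exactly through $m_{*}=\Omega(k)$, which is what forces the final bound to be $O(1/\sqrt k)$ rather than something weaker.
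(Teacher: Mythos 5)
Your setup is correct and clean: realising $A^{\Ber}$ as a random maximal chain at a binomial level, observing the monotonicity of $e(G[S_m])$ in $m$, and reducing to the claim
\[
\sum_{m\ge m_*}\Pr\bigl(e(G[S_m])\in W\bigr)=O(1)
\]
is exactly the right way to deploy \cref{thm:rough-LO}-style thinking here, and the Markov/deterministic-localisation preliminaries are fine. But the final claim --- that the expected number of levels the chain spends in $W$ is $O(1)$ --- is precisely the hard part, and what you've written for it is not a proof. The ``dichotomy on the density of $\bar G$'' does not close: your sparse case needs $e(\bar G)=O(k^{r-1})$ (so that $\binom{m}{r}-\ell\le e(\bar G[S_m])\le e(\bar G)$ forces $m$ into an $O(1)$-length interval), your dense case needs $e(\bar G[S_m])$ to concentrate on a scale $o(k^{r-1})$ relative to the moving target $\binom{m}{r}-\ell$, and the wide intermediate range is dismissed by ``apply the same dichotomy to $\bar G$ in place of $G$'' --- but the dichotomy is already symmetric in $G\leftrightarrow\bar G$, so this is circular and adds nothing. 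What you actually need at this point is a reason why, conditionally on reaching $W$, the next few increments of the chain are $\Omega(k^{r-1})$ with high probability; without that, a chain can in principle linger.

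The paper's proof supplies exactly this missing ingredient, and it is worth seeing where your route and theirs part ways. Rather than arguing about the chain directly, the paper first reveals a supersample $A'$ of size $\approx 2k$; if $e(G[A'])\le\ell$, Azuma--Hoeffding over the deletion step kills the probability (this plays the role of your Markov case, but conditionally). Otherwise $e(G[A'])=\Omega(k^r)$, and \cref{lem:average-min-degree} extracts $B\subseteq A'$ with $\deg_{G[A'][B]}(i)=\Omega(k^{r-1})$ for every $i\in B$. Conditioning on $A^{\Ber}\setminus B$ and viewing $e(G[A^{\Ber}])$ as a function of the coordinates indexed by $B$, each $\Delta_i$ stochastically dominates a binomial with mean $\Omega(k^{r-1})$, so the hypothesis of \cref{thm:rough-LO} is met. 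In other words, the paper does not prove the $O(1)$ chain-occupancy claim in the generality you need it; it engineers a coordinate subset on which the increments are provably large, which is a genuinely different idea. Your proposal reduces the problem to an anti-concentration statement of comparable difficulty to the one being proved, and then sketches an argument for it that has a real gap in the intermediate-density regime. To repair it along your own lines you would need, in effect, to rediscover the min-degree extraction: show that when $e(G[S_m])=\Omega(k^r)$ and $m=\Theta(k)$, a constant fraction of the remaining vertices have degree $\Omega(k^{r-1})$ into $S_m$, so the chain escapes $W$ in $O(1)$ expected steps. As written, the proposal is incomplete.
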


Note that \cref{prop:bernoulli-sqrt} is best-possible,
as can be seen by considering the case where $G$ is a clique. We
defer the proof of \cref{prop:bernoulli-sqrt} to \cref{sec:edge-ber}.

While the Littlewood--Offord point of view has been very useful for attacking
\cref{conj:sqrt}, the proofs of \cref{conj:1/e} in \cite{FS,MMNT}
proceeded along rather different lines. Our original motivation for
developing Poisson-type analogues to the Littlewood--Offord problem (where $p$ may go to zero) was that they may give a simpler proof of \cref{conj:1/e}
and faciliate generalisation to hypergraphs (a problem that was also
suggested by Alon, Hefetz, Krivelevich and Tyomkyn). While we did not manage to achieve this original goal, we were able to prove several
Poisson-type anti-concentration inequalities (stated in the next subsection),
one of which (\cref{thm:poisson-nonnegative}) implies the following
``Bernoulli version'' of \cref{conj:1/e}. The short deduction can
be found in \cref{sec:edge-ber}. 
\begin{prop}
\label{prop:bernoulli-1-e}Suppose $n/k\to\infty$. Then for any $\ell\ne0$
and any $r$-uniform hypergraph $G$, we have $\Pr\left(X_{G,k}^{\Ber}=\ell\right)\le1/e+o\left(1\right)$.
\end{prop}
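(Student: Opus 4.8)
The plan is to deduce \cref{prop:bernoulli-1-e} from \cref{thm:poisson-nonnegative}, whatever exact hypotheses that theorem turns out to require. The natural strategy is to recognise $X_{G,k}^{\Ber}$ as a polynomial with nonnegative coefficients in the Bernoulli random vector $\x\in\Ber(p)^{n}$ with $p=k/n\to 0$, and then check that the structural hypotheses of \cref{thm:poisson-nonnegative} are met. Specifically, write $X_{G,k}^{\Ber}=\sum_{e\in E(G)}\prod_{v\in e}\xi_{v}$, which is a degree-$r$ multilinear polynomial; all coefficients equal $1$, so in particular every nonzero coefficient has absolute value at least $1$, which is the condition entering the definitions of rank used earlier in the excerpt. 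The key quantity to control is then some measure of how ``spread out'' $G$ is — e.g.\ the rank $r(f)$ or a matching number in the associated hypergraph — and one must argue that the Poisson-type bound $1/e+o(1)$ holds regardless of how this quantity behaves, since the conjectured bound is uniform over all $G$.

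First I would handle the \emph{degenerate case} separately: if $G$ has very few edges relative to $k$ — more precisely, if the expected number of ``active'' hyperedges $\E[X_{G,k}^{\Ber}]=p^{r}e(G)$ is bounded or the vertex set touched by $E(G)$ is small — then $X_{G,k}^{\Ber}$ should itself be close to a Poisson (or a sum of few Bernoulli-type terms), and the bound $\Pr(X_{G,k}^{\Ber}=\ell)\le 1/e+o(1)$ for $\ell\neq 0$ follows from the classical fact that the maximum point probability of $\Po(\lambda)$ over positive integers is at most $1/e$ (attained in the limit $\lambda\to 1$), together with a Poisson approximation estimate (Le Cam / Chen--Stein) for the number of active hyperedges. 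The point is that the worst case for \cref{conj:1/e} is exactly the ``Poisson regime'', so this case is not an annoyance but the heart of where the constant $1/e$ comes from.

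Second, in the \emph{non-degenerate case} — where $G$ is spread out enough that the relevant rank-type parameter, call it $R$, tends to infinity — I would invoke \cref{thm:poisson-nonnegative} directly to get a bound of the form $O(1/\sqrt{R})=o(1)$, which is stronger than $1/e+o(1)$. The bridge between the two cases is a dichotomy argument: either $G$ is ``essentially low-complexity'' (few edges / small support / bounded rank), in which case Poisson approximation applies, or it has a large matching in the associated hypergraph, in which case the quantitative polynomial anti-concentration bound kicks in. I would make this dichotomy precise by a greedy argument extracting a large matching unless the hyperedges are concentrated on a small vertex set, and in the latter situation bound $X_{G,k}^{\Ber}$ crudely (it is a function of a bounded-size Bernoulli vector, each coordinate of which is $0$ with probability $1-p\to 1$, so with probability $1-o(1)$ all of them are $0$ and $X_{G,k}^{\Ber}=0$, making $\Pr(X_{G,k}^{\Ber}=\ell)=o(1)$ for any fixed $\ell\neq 0$; a slightly more careful accounting handles $\ell$ growing).

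The main obstacle I anticipate is the bookkeeping in the intermediate regime, where the matching number $R$ grows but only slowly (say $R$ bounded by a slowly growing function), so that neither the crude ``everything is zero'' bound nor the asymptotic Poisson approximation is immediately decisive. Here one must quantify the Poisson approximation error in terms of $R$ and the second-moment quantity $\sum_{e\cap e'\neq\emptyset} p^{|e\cup e'|}$ measuring overlaps among hyperedges, and show the error is $o(1)$ whenever \cref{thm:poisson-nonnegative} does not already give $o(1)$. Since \cref{prop:bernoulli-1-e} is advertised as a ``short deduction'', I expect the actual resolution is that \cref{thm:poisson-nonnegative} is stated in a robust enough form (a clean $1/e+o(1)$ bound under a mild nondegeneracy hypothesis, with the degenerate case folded in) that this tension largely disappears, and the remaining work is just verifying the hypotheses and translating between the hypergraph language and the polynomial language.
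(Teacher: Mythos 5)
Your approach is essentially the paper's. The entire deduction in the paper is exactly the first step you describe: set $p=k/n$, $\x\in\Ber(p)^n$, write $X_{G,k}^{\Ber}\stackrel{d}{=}f(\x)$ with $f(\x)=\sum_{e\in E(G)}\prod_{v\in e}\xi_v$, note that $f$ has nonnegative coefficients and zero constant term (so $\ell\neq 0$ is not the constant coefficient), and apply \cref{thm:poisson-nonnegative}. That is the whole proof.

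The extended dichotomy you sketch (degenerate vs.\ non-degenerate, Chen--Stein Poisson approximation vs.\ a rank/matching lower bound) is unnecessary, because \cref{thm:poisson-nonnegative} as stated has no degree or rank hypothesis whatsoever: it applies to \emph{any} polynomial with nonnegative coefficients, and gives the uniform bound $1/e + o_{p\to 0}(1)$ for any target value other than the constant coefficient. The sparse, ``Poisson-like'' worst case you were guarding against is already handled inside the proof of that theorem (a clean induction on $n$ reducing to $\sup_m \Pr(\Bin(m,p)=1)\to 1/e$). One small miscue is worth flagging: in your ``non-degenerate case'' you suggest that \cref{thm:poisson-nonnegative} would yield a bound of the form $O(1/\sqrt R)$; a bound of that shape comes from \cref{thm:positive-polynomial-LO} (the fixed-$p$ regime), not from the Poisson-type theorem, which gives the constant $1/e+o(1)$ independently of any rank parameter. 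Your closing instinct --- that the theorem is ``stated in a robust enough form'' and the tension disappears --- is exactly right; the rest of the proposal is precaution against a hypothesis the theorem does not impose.
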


\subsection{Poisson-type anti-concentration inequalities for polynomials\label{subsec:poisson}}

Consider first the Littlewood--Offord case where $X=a_{1}\xi_{1}+\dots+a_{n}\xi_{n}$,
for some fixed sequence $\left(a_{1},\dots,a_{n}\right)\in\RR^{n}$
and a random vector $\left(\xi_{1},\dots,\xi_{n}\right)\in\Ber\left(p\right)^{n}$.
We want to prove an anti-concentration theorem for the case where $p$
is small. Of course, if $p$ is \emph{extremely} small, then we are
very likely to see $\xi_{1}=\dots=\xi_{n}=0$, meaning that $\Pr\left(X=0\right)\approx1$.
Discounting this trivial case, we are able to prove the following
theorem (in \cref{sec:poisson}).
\begin{thm}
\label{thm:poisson-L-O}Consider a sequence $\left(a_{1},\dots,a_{n}\right)\in\RR^{n}$,
let $\x\in\Ber\left(p\right)^{n}$
and let $X:=a_{1}\xi_{1}+\dots+a_{n}\xi_{n}$. Then for any $x\ne0$,
\[
\Pr\left(X=x\right)\le\frac{1}{e}+o_{p\to0}\left(1\right).
\]
\end{thm}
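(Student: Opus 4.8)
The plan is to reduce to a one-dimensional anti-concentration estimate by conditioning on which of the $\xi_i$ are nonzero, and then to exploit the fact that, when $p\to 0$, the number $S$ of nonzero coordinates is (approximately) Poisson and in particular the event $\{S=1\}$ already forces $X$ to range over all $n$ values $a_1,\dots,a_n$. First I would discard the degenerate coordinates: if $a_i=0$ then $\xi_i$ is irrelevant, so we may assume every $a_i\ne 0$; also we may assume $n\ge 1$, else $X\equiv 0$ and there is nothing to prove. The key observation is that the coefficients split into finitely many ``value classes'': fix $x\ne 0$; a subset $I\subseteq\{1,\dots,n\}$ with $\xi_i=1$ exactly for $i\in I$ contributes $X=\sum_{i\in I}a_i$, so $\Pr(X=x)=\sum_{I:\ \sum_{i\in I}a_i=x}\prod_{i\in I}p\prod_{i\notin I}(1-p)$.

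Next I would organise this sum by the size $|I|=t$. Write $q_t(x)$ for the number of size-$t$ subsets $I$ with $\sum_{i\in I}a_i=x$, so $\Pr(X=x)=\sum_{t=0}^n q_t(x)\,p^t(1-p)^{n-t}$. Since $x\ne 0$ we have $q_0(x)=0$. For $t=1$, $q_1(x)$ is simply the number of indices $i$ with $a_i=x$; call it $m$. The heart of the matter is a \emph{layer-by-layer} bound: for each fixed $t\ge 1$, I claim $q_t(x)\le \binom{n-1}{t-1}$ more precisely, I want to compare the ``layer-$t$ contribution'' $q_t(x)p^t(1-p)^{n-t}$ to the probability $\binom n t p^t(1-p)^{n-t}=\Pr(S=t)$ that there are exactly $t$ nonzero coordinates, and sum. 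The cleanest route is an Erd\H os--Ko--Rado/Littlewood--Offord-style argument on each layer: among all size-$t$ subsets of an $n$-set, at most a $(t/n)$-fraction through a one-coordinate-fixing trick, or directly at most $\binom{n-1}{t-1}$ of them can have a prescribed sum when one uses that the subsets with sum $x$, together with the family of all $(t{-}1)$-subsets of the remaining coordinates after removing a fixed ``heavy'' index, form a structure controlled by Erd\H os's antichain bound. From $q_t(x)\le\binom{n-1}{t-1}$ one gets
\[
\Pr(X=x)\le\sum_{t\ge 1}\binom{n-1}{t-1}p^t(1-p)^{n-t}=p\sum_{s\ge 0}\binom{n-1}{s}p^{s}(1-p)^{(n-1)-s}=p,
\]
which is $o_{p\to0}(1)$, far better than $1/e+o(1)$. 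The subtlety is that the bound $q_t(x)\le\binom{n-1}{t-1}$ is \emph{false} in general (e.g. $a_i$ all equal to $x$ forces every coordinate to be ``heavy''), so this naive approach needs the extra idea below.

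The fix, and the step I expect to be the main obstacle, is handling the case where many coefficients are equal to (or sum in many ways to) $x$ — this is exactly the regime where $X\in\Bin(m,p)\cdot x + (\text{other terms})$ behaves Poisson-like rather than $p$-small, and where the $1/e$ genuinely appears. Here I would argue as follows: let $m$ be the maximum multiplicity of any real value among $a_1,\dots,a_n$ (equivalently, the largest set of equal coefficients). If $m$ is bounded, a perturbation/genericity argument lets us treat the $a_i$ as $\QQ$-linearly ``spread out'' and the layer bound $q_t(x)=O_m\big(\binom{n}{t}\cdot t/n\big)$ holds, giving $\Pr(X=x)=O_m(p)\to 0$. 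If $m\to\infty$, then those $m$ equal coefficients (say all equal to some value $a$) contribute a $\Bin(m,p)$ term; conditioning on the remaining coordinates, $\Pr(X=x)$ is an average of point probabilities of (shifted) binomials $\Bin(m,p)$, each of which is at most $\max_j\binom m j p^j(1-p)^{m-j}$, and since $p\to 0$ this max is $(1-p)^m\le e^{-pm+O(p^2 m)}$ when the mode is at $0$, and is $\le 1/e+o(1)$ uniformly once $m\to\infty$ by the Poisson/Stirling estimate $\max_j\binom m j p^j(1-p)^{m-j}\le 1/\sqrt{2\pi\lambda}\cdot(1+o(1))$ for $\lambda=pm$ bounded, and $\le 1/e+o(1)$ for $\lambda$ in any range — this is precisely the Poisson point-mass bound quoted in the introduction for $\Bin(n,1/n)$. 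The remaining bookkeeping is to split $\{1,\dots,n\}$ into the ``heavy value'' $a$ and the rest, note $x$ is hit only if the binomial part lands at one of finitely many values $\lfloor (x-y)/a\rfloor$ determined by the rest, and apply the binomial point bound conditionally. Combining the two cases ($m$ bounded vs.\ $m\to\infty$) along any subsequence yields $\Pr(X=x)\le 1/e+o_{p\to0}(1)$, completing the proof.
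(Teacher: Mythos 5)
The intuition behind your proposal (that the $1/e$ is fundamentally Poisson/binomial behaviour arising from many equal coefficients) is correct, but the execution has several genuine gaps, and as written the argument does not prove the theorem. The central flaw is in the case $m\to\infty$: after conditioning on the contribution $Y$ of the non-heavy coordinates, you bound $\Pr(X=x)$ by $\max_j\binom{m}{j}p^j(1-p)^{m-j}$, and you then assert that this maximum is at most $1/e+o(1)$ ``for $\lambda=pm$ in any range''. This is simply false when the mode is at $0$: if $pm\to 0$ (which is entirely possible even with $m\to\infty$, e.g.\ $m\approx p^{-1/2}$), then $\max_j\binom{m}{j}p^j(1-p)^{m-j}=(1-p)^m\to 1$, not $1/e$. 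The bound $1/e+o(1)$ is only valid for $\Pr\big(\Bin(m,p)=j\big)$ with $j\ge 1$; to restrict to $j\ge 1$ you would need to know that the other coordinates cannot already sum to $x$ (i.e.\ that $Y\ne x$ deterministically), and there is no reason for that to be true. This is precisely the interaction that requires a more careful argument.

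The other case ($m$ bounded) is also not rigorous. The ``perturbation/genericity'' step is never made precise: the $a_i$ are given, not chosen, and if you genuinely perturb them you change the quantity you are trying to bound. (As an aside, for $m=1$ one \emph{can} obtain $q_t(x)\le\binom{n}{t-1}/t$ by a clean double-counting argument: each $t$-set in the family has $t$ downward shadows and, by distinctness of the $a_i$, no $(t-1)$-set is a shadow of two of them --- but this does not extend to $m>1$ in the form you need.) Finally, the case split ``$m$ bounded vs.\ $m\to\infty$ along any subsequence'' sits uneasily with the theorem's requirement of a bound $1/e+g(p)$ with $g$ depending only on $p$ and not on $n$.

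For comparison, the paper's proof avoids all of this by a short induction on $n$. It defines $\tau(p):=\sup_{m}mp(1-p)^{m-1}$, observes that $\tau(p)\to 1/e$ as $p\to 0$, and proves $\Pr(X=x)\le\tau(p)$ directly. The base case is exactly your ``all coefficients equal'' situation: if all $a_i=x$ then $X=x$ forces exactly one $\xi_i=1$, and $\Pr\big(\Bin(n,p)=1\big)\le\tau(p)$. Otherwise, choosing a coordinate $j$ with $a_j\ne x$ and conditioning on $\xi_j\in\{0,1\}$ reduces to the $(n-1)$-variable problem with the two targets $x$ and $x-a_j$, both nonzero, so the induction hypothesis applies to each; averaging gives $\Pr(X=x)\le\tau(p)$. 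This inductive step is exactly the mechanism that is missing from your decomposition: it lets one peel off coordinates one at a time while preserving the ``target is nonzero'' invariant, whereas your conditioning on the entire non-heavy part can land the binomial target at zero.
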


(The notation $o_{p\to0}\left(1\right)$ refers to a function $g\left(p\right)$,
not depending on $n$, such that $g\left(p\right)\to0$ as $p\to0$). We remind the reader that in the case where $p$ does not tend to zero, the Littlewood--Offord theorem gives a bound of $O(1/\sqrt n)$ on the point probabilities of $X$.

One might hope to prove that the same conclusion
holds whenever $X$ is a polynomial of bounded degree with zero constant
coefficient. Unfortunately, this is not true in general: for example,
if $X=\sum_{i=1}^{n}\xi_{i}-\sum_{i=1}^{n}\sum_{j=i+1}^{n}\xi_{i}\xi_{j}$,
and $p=1/n$, then $\Pr\left(X=1\right)=3/\left(2e\right)+o\left(1\right)$.
Nevertheless, we are able to prove that $\Pr\left(X=x\right)$ is bounded away from 1 for any $x\ne0$, as follows.
\begin{prop}
\label{prop:weak-bound}Consider an $n$-variable polynomial $f$
with degree at most $d$, and let $\x\in\Ber\left(p\right)^{n}$
for some $p\le1/2$. Then for any $x$ not equal to the constant coefficient
of $f$,
\[
\Pr\left(f\left(\x\right)=x\right)\le1-2^{-d}.
\]
\end{prop}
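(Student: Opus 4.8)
The plan is to induct on the degree $d$. For the base case $d=0$, the polynomial $f$ is constant, so $\Pr(f(\x)=x)=0$ unless $x$ equals the constant coefficient, and $0\le 1-2^{0}=0$ holds trivially (in fact the probability is $0$). For the inductive step, suppose the claim holds for degree $d-1$. Write $f(\x)=\xi_1\cdot g(\xi_2,\dots,\xi_n)+h(\xi_2,\dots,\xi_n)$, where $g$ has degree at most $d-1$ and $h$ has degree at most $d$; note that the constant coefficient $c$ of $f$ equals the constant coefficient of $h$. I would like to reduce to a polynomial in fewer variables whose degree has dropped, so that the inductive hypothesis applies; the mechanism is to condition on $\xi_1=1$ and compare with $\xi_1=0$.

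First I would handle the trivial case $g\equiv 0$: then $f=h$ does not depend on $\xi_1$, so $f$ is effectively an $(n-1)$-variable polynomial of degree at most $d$ — this does not immediately lower the degree, so instead I would peel off a different variable, or more cleanly, observe that in this case we may simply regard $f$ as a polynomial in $n-1$ variables and repeat; since there are finitely many variables and $f$ is not constant (else $x=c$ is excluded), some variable genuinely appears, and after reordering we may assume $\xi_1$ appears in $f$, i.e. $g\not\equiv 0$. Now condition on the values of $\xi_2,\dots,\xi_n$. On the event $\{g(\xi_2,\dots,\xi_n)=0\}$, we have $f(\x)=h(\xi_2,\dots,\xi_n)$ regardless of $\xi_1$; on the complementary event $\{g\ne 0\}$, the two values $f(0,\xi_2,\dots,\xi_n)=h$ and $f(1,\xi_2,\dots,\xi_n)=g+h$ are distinct, so at most one of them equals $x$, and hence
\[
\Pr\bigl(f(\x)=x \cond \xi_2,\dots,\xi_n\bigr)\le \max\{\Pr(\xi_1=0),\Pr(\xi_1=1)\}=1-p\le 1-2^{-1}
\]
on that event. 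Combining, $\Pr(f(\x)=x)\le \Pr(g=0)\cdot\Pr(h=x\cond g=0)+\Pr(g\ne 0)\cdot(1-p)$. This is not yet quite what I want, because on the event $g=0$ I need to control $\Pr(h=x)$ in terms of the degree drop. The key realisation is that $\{g=0\}$ is itself (the complement of) a polynomial event of degree $\le d-1$, but that is awkward to feed into the induction directly.

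So I would instead argue slightly differently: apply the inductive hypothesis to $g$, a degree-$(d-1)$ polynomial in the variables $\xi_2,\dots,\xi_n$. If the constant coefficient of $g$ is $0$, then $\Pr(g(\xi_2,\dots,\xi_n)\ne 0)\le 1-2^{-(d-1)}$ by the inductive hypothesis (applied with the value $0$, which is the constant coefficient — wait, that direction is trivial), so I actually want $\Pr(g=0)\ge 2^{-(d-1)}$, which requires a \emph{lower} bound on a point probability, not an upper bound. The clean fix, and the step I expect to be the main obstacle, is to choose which value of $\xi_1$ to condition on more carefully: I will show $\Pr(f(\x)=x)\le 1-p\cdot 2^{-(d-1)}$ is \emph{not} strong enough, so instead I condition the other way. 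Set $f_0=f(0,\xi_2,\dots,\xi_n)=h$ and $f_1=f(1,\xi_2,\dots,\xi_n)=g+h$. Then $\Pr(f(\x)=x)\le (1-p)\Pr(f_0=x)+p\Pr(f_1=x)$. Both $f_0$ and $f_1$ are $(n-1)$-variable polynomials; $f_0$ has degree $\le d$ with constant coefficient $c\ne x$, but $f_1$ may have full degree $d$ too, so naive induction stalls. The resolution is the observation that $f_1-f_0=g$ has degree $\le d-1$: picking $i$ with $\xi_i$ appearing in $g$, and conditioning on all variables except $\xi_1,\xi_i$, a $2\times 2$ case analysis shows that among the four values $f(\varepsilon_1,\varepsilon_i,\cdot)$ at least one pair differs, forcing the conditional probability below $1-p^2\ge \tfrac34\ge 1-2^{-2}$ when $d\ge 2$, and more generally an inductive refinement of this two-variable argument gives the bound $1-2^{-d}$. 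I would carry out this two-variable conditioning carefully, tracking that each unit of degree buys one factor of roughly $\tfrac12$ in the surviving mass, which is precisely what yields $1-2^{-d}$; getting the constants to line up exactly (rather than, say, $1-p^d$) is the delicate bookkeeping, and I would present it as the core of the argument.
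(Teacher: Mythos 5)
Your sketch goes off the rails at the point where you try to extract a quantitative bound from the conditioning argument, and the difficulty is not mere bookkeeping: the naive approach genuinely cannot yield $1-2^{-d}$ for $p<1/2$. Look at your one-variable step. After conditioning on $\xi_2,\dots,\xi_n$ and noting that $f(0,\cdot)\ne f(1,\cdot)$ on $\{g\ne0\}$, you conclude the conditional probability is at most $\max\{p,1-p\}=1-p$, and then write ``$1-p\le 1-2^{-1}$''. But for $p\le 1/2$ one has $1-p\ge 1/2$, so this inequality points the wrong way and is an equality only at $p=1/2$. The same defect recurs in your $2\times2$ discussion: you derive a conditional bound of $1-p^2$ and then observe $1-p^2\ge 3/4\ge1-2^{-2}$, which again runs in the wrong direction. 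In general, the only outcome the combinatorial Nullstellensatz guarantees to miss $x$ could be the all-ones outcome $(\xi_1,\dots,\xi_d)=(1,\dots,1)$, which under $\Ber(p)^d$ has probability $p^d\ll 2^{-d}$; so direct conditioning gives at best $\Pr(f(\x)=x)\le 1-p^d$, which tends to $1$ as $p\to0$. A second, related gap: once you condition on $\xi_2,\dots,\xi_n$, the relevant ``constant term'' for the remaining one-variable polynomial is $h(\xi_2,\dots,\xi_n)$, not the constant coefficient of $f$, so the hypothesis $x\ne f(0,\dots,0)$ (which is exactly what saves you in the degree-one case, giving $\Pr=p$ rather than $1-p$) is lost after conditioning, and you have nothing to induct with.

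The missing idea, which is how the paper proceeds, is to factor the randomness rather than condition away variables: write $\xi_i=\xi_i'\gamma_i$ with independent $\x'\in\Ber(2p)^n$ and $\g\in\Ber(1/2)^n$ (this requires $p\le 1/2$ and gives exactly the $\Ber(p)^n$ law). Condition on $\x'$. The resulting multilinear polynomial in $\g$ still has the \emph{same} constant coefficient as $f$, since setting all $\gamma_i=0$ kills every product $\xi_i'\gamma_i$; so if it degenerates to a constant, that constant is $f(0,\dots,0)\ne x$ and the conditional probability is $0$. Otherwise it has degree between $1$ and $d$, and now the remaining coordinates $\g$ are \emph{uniform}, so each of the $2^d$ outcomes of the $d$ active variables has probability exactly $2^{-d}$ and the Nullstellensatz gives a clean $1-2^{-d}$. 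This factoring step is what ``buys a factor of $1/2$ per degree'' in a way that actually holds for all $p\le1/2$; without it, you are stuck with factors of $p$, which is why your constants refuse to line up.
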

We prove \cref{prop:weak-bound} in \cref{sec:poisson}, with the combinatorial Nullstellensatz (see \cite{Alo99}). Next, one way to recover the ``$1/e$'' behaviour is to consider only
polynomials with nonnegative coefficients, as in \cref{thm:positive-polynomial-LO}.
We also prove the following theorem in \cref{sec:poisson}.
\begin{thm}
\label{thm:poisson-nonnegative}Consider an $n$-variable polynomial
$f$ with nonnegative coefficients, and consider a random vector $\x\in\Ber\left(p\right)^{n}$.
Then for any $x$ not equal to the constant coefficient of $f$,
\[
\Pr\left(f\left(\x\right)=x\right)\le\frac{1}{e}+o_{p\to0}\left(1\right).
\]
\end{thm}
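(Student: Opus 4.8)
The plan is to induct on the number of variables $n$, proving the clean statement that $\Pr(f(\x)=x)\le e^{p-1}$ whenever $f$ is an $n$-variable polynomial with nonnegative coefficients and $x$ is not the constant coefficient of $f$; since $e^{p-1}=1/e+o_{p\to 0}(1)$, this gives the theorem. Subtracting the constant coefficient from $f$ (which only shifts $x$), I may assume the constant coefficient of $f$ is $0$, so $f$ is nonnegative and monotone nondecreasing on $\{0,1\}^n$ and $x\ne 0$; and if $x<0$ the claim is trivial, so assume $x>0$. The case $n=0$ is immediate.

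For the inductive step, let $c_i\ge 0$ be the coefficient of the degree-one monomial $x_i$ in $f$; the argument rests on a dichotomy. \emph{Case 1: some $c_i\ne x$} (this includes any variable not appearing in $f$, since then $c_i=0\ne x$). Grouping the monomials of $f$ by whether they contain $x_i$, write $f=x_i\,g+h$ with $g,h$ polynomials in the remaining $n-1$ variables having nonnegative coefficients; then the constant coefficient of $h$ is $0$ and that of $g+h$ is $c_i$. Conditioning on $\xi_i$ (and writing $\x'$ for the other coordinates),
\[
\Pr(f(\x)=x)=(1-p)\,\Pr\bigl(h(\x')=x\bigr)+p\,\Pr\bigl((g+h)(\x')=x\bigr).
\]
Because $x\ne 0$ and $x\ne c_i$, the inductive hypothesis applies to both $h$ and $g+h$, bounding each probability on the right by $e^{p-1}$, whence $\Pr(f(\x)=x)\le (1-p)e^{p-1}+p\,e^{p-1}=e^{p-1}$. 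The whole point of choosing a variable with $c_i\ne x$ is that \emph{both} subproblems then fall under the hypothesis, so this step loses nothing.

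\emph{Case 2: $c_i=x$ for every $i$.} Then every variable appears and $f=x(x_1+\dots+x_n)+f_{\ge 2}$, where $f_{\ge 2}$ collects the monomials of degree $\ge 2$ and has nonnegative coefficients. Thus $f(\x)=x(\xi_1+\dots+\xi_n)+f_{\ge 2}(\x)$ is a sum of two nonnegative quantities, and since $x>0$ the event $\{f(\x)=x\}$ occurs exactly when $\xi_1+\dots+\xi_n=1$ (a single active variable forces $f_{\ge 2}(\x)=0$ automatically, as each monomial of $f_{\ge 2}$ requires two switched-on variables). Hence $\Pr(f(\x)=x)=np(1-p)^{n-1}$, and the elementary estimates $(1-p)^{n-1}\le e^{-(n-1)p}$ and $u e^{-u}\le 1/e$ (with $u=np$) give $np(1-p)^{n-1}\le e^{p}\,(np)e^{-np}\le e^{p-1}$. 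This closes the induction.

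The one thing that genuinely needs care is that an induction running over up to $n$ variables must not leak a constant (or even a factor close to $1$) at each step, since the bound would then be useless once $np$ is large; the dichotomy is engineered precisely so that the recursive case is \emph{exactly} loss-free while the terminal case is dispatched in one shot by a binomial computation. I expect that to be the only subtle point — the rest is routine. (The same scheme, incidentally, should reprove \cref{thm:poisson-L-O}: for linear $f$ the ``degenerate'' case is $a_1=\dots=a_n=x$, and there too $f(\x)=x$ forces a single active variable.)
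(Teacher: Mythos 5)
Your proof takes essentially the same route as the paper: both run an induction on $n$ with the same dichotomy, and in the inductive step both condition on a $\xi_i$ whose linear coefficient differs from the target so that \emph{both} conditional subproblems remain within scope of the hypothesis, while the degenerate case (all linear coefficients equal to the target) is handled by observing that the event forces exactly one $\xi_i$ to be on, reducing to a $\Bin(n,p)$ point probability. The only cosmetic difference is that you carry the explicit bound $e^{p-1}$ through the induction, whereas the paper carries $\tau(p)=\sup_{m}mp(1-p)^{m-1}$ and bounds it by $1/e+o_{p\to0}(1)$ in a separate short lemma; these agree since $\tau(p)\le e^{p-1}$.

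There is, however, one small but genuine gap: both your decomposition $f=x_i\,g+h$ with $g,h$ free of $x_i$, and your Case~2 claim that a single active variable forces $f_{\ge2}(\x)=0$, implicitly assume $f$ is \emph{multilinear}. For example, if $f(x_1,x_2)=x\cdot x_1+x\cdot x_2+b\,x_1^2$ with $b>0$ then $c_1=c_2=x$, yet $f(1,0)=x+b\ne x$, so $\{f(\x)=x\}$ is not the event $\{\xi_1+\xi_2=1\}$; similarly the decomposition $f=x_1g+h$ leaves $g$ still containing $x_1$. The paper dispatches this in one sentence, which you should add at the outset: since $\xi_i\in\{0,1\}$ gives $\xi_i^k=\xi_i$, replace $f$ by its multilinear reduction, which has the same constant coefficient, still has nonnegative coefficients (each new coefficient is a sum of old ones), and induces the same random variable $f(\x)$. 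With that fix the argument is complete. Your parenthetical remark that the same scheme covers \cref{thm:poisson-L-O} is also correct and is precisely how the paper unifies the two proofs.
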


We emphasise that \cref{thm:poisson-nonnegative} makes no assumption
on the degree of the polynomial $f$.

\subsection{Subgraph counts in random graphs\label{subsec:subgraph-statistics}}

Fix $p\in\left(0,1\right)$ and let $G\in\GG\left(n,p\right)$ be
a random labelled graph on the vertex set $\left\{ 1,\dots,n\right\} $
where every pair of vertices is included as an edge with probability
$p$ independently. This is called the binomial or Erd\H os--Renyi
model of random graphs. For a fixed graph $H$, let $X_{H}$ be the
number of copies of $H$ in $G$. The study of $X_{H}$ and its distribution
is a fundamental topic in the theory of random graphs (see for example
\cite{Bol01,JLR00}). It is well-known that for any $H$ with no isolated
vertices, $X_{H}$ satisfies a central limit theorem, but the anti-concentration
behaviour of $X_{H}$ is not as well-understood. In this setting where
$p$ is fixed, one can deduce\footnote{The central limit theorem of Barbour, Karo\'nski and Ruci\'nski
is not stated with a metric that allows one to directly read off an
estimate for the distribution function of $X_{H}$. But, it is possible
to deduce such an estimate with the method of \cite[Proposition~1.2.2]{Ros11}.} from a quantitative central limit theorem by Barbour, Karo\'nski
and Ruci\'nski~\cite{BKR89} that $\Pr\left(X_{H}=x\right)\le O\left(1/\sqrt{n}\right)$
for all $x$. As an application of their Littlewood--Offord-type
polynomial anti-concentration inequality mentioned earlier in this
paper, Meka, Nguyen and Vu proved the stronger bound that $\Pr\left(X_{H}=x\right)\le n^{o(1)-1}$. This was a consequence of a more general result concerning random graphs of the form $G_p$, obtained by starting with a fixed graph $G$ and including each edge of $G$ with probability $p$ independently. Specifically, Meka, Nguyen and Vu observed that if $G$ has $r$ edge-disjoint copies of $H$, and $X_H(G_p)$ is the number of copies of $H$ in $G_p$, then $X_H(G_p)$ can be interpreted as a rank-$r$ polynomial of independent $p$-Bernoulli random variables, so $\Pr(X_H(G_p)=x)\le r^{o(1)-1/2}$ for all $x\in \NN$. Since the polynomial corresponding to $X_H(G_p)$ has nonnegative coefficients, one can use \cref{thm:positive-polynomial-LO} in place of the Meka--Nguyen--Vu
anti-concentration inequality to improve this as follows.
\begin{cor}
Fix $p\in (0,1)$ and let $G$ be a graph with $r$ edge-disjoint copies of $H$. Then for any $x\in \NN$ we have
$$\Pr\left(X_{H}(G_p)=x\right)=O\left(1/\sqrt{r}\right).$$
In particular, in $\GG(n,p)$ we have
$$\Pr\left(X_{H}=x\right)\le O(1/n).$$
\end{cor}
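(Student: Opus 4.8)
The plan is to apply \cref{thm:positive-polynomial-LO} directly to the polynomial $f$ in $|E(G)|$ variables (one variable $x_e$ for each edge $e$ of $G$) whose value $f(\x)$ counts the copies of $H$ in the subgraph of $G$ induced by the edge-indicator vector $\x$. Concretely, enumerating the (not necessarily edge-disjoint) copies $H_1,\dots,H_m$ of $H$ in $G$, we have $X_H(G_p)=\sum_{j=1}^m \prod_{e\in E(H_j)} x_e$, which is a multilinear polynomial of degree $d:=|E(H)|$ (a fixed constant, since $H$ is fixed) with all coefficients equal to $1$, hence nonnegative. When we plug in $\x\in\Ber(p)^{|E(G)|}$, the variable $x_e$ is the indicator that edge $e$ survives, so $f(\x)$ has exactly the law of $X_H(G_p)$.

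Next I would lower-bound $r(f)$, the largest matching in the hypergraph on vertex set $E(G)$ whose hyperedges are the edge-sets $E(H_j)$ of size-$d$ coefficients of absolute value at least $1$ (all of them, here). By hypothesis $G$ contains $r$ pairwise edge-disjoint copies of $H$; the edge-sets of these $r$ copies are pairwise disjoint $d$-element subsets of $E(G)$, each of which is a hyperedge of the hypergraph defining $r(f)$, so they form a matching of size $r$. Hence $r(f)\ge r$. Now \cref{thm:positive-polynomial-LO} (applied with this $p$, this $d$, and $x$ — taking any $x\in\RR$, in particular any $x\in\NN$) gives
\[
\Pr\left(X_H(G_p)=x\right)\le \Pr\left(|f(\x)-x|<1\right)\le O\left(1/\sqrt{r(f)}\right)\le O\left(1/\sqrt r\right),
\]
where the implied constant depends only on $p$ and $d=|E(H)|$, i.e.\ only on $p$ and $H$, which is exactly the claimed bound.

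For the ``in particular'' statement, take $G=K_n$ and note that $\GG(n,p)$ has the same distribution as $(K_n)_p$. It is standard that $K_n$ contains $\Omega(n^2)$ pairwise edge-disjoint copies of $H$ whenever $H$ is a fixed graph (for instance, greedily: each copy of $H$ uses a bounded number of edges, so one can keep extracting edge-disjoint copies as long as $\Omega(n^2)$ edges remain; alternatively invoke a Wilson-type packing result). Applying the first part with $r=\Omega(n^2)$ yields $\Pr(X_H=x)\le O(1/\sqrt{r})=O(1/n)$.

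The only mildly nontrivial point — and the part I'd want to state carefully rather than the genuine obstacle — is the bookkeeping that $f(\x)$ genuinely has the distribution of $X_H(G_p)$ and that ``$r$ edge-disjoint copies of $H$'' translates into ``matching of size $r$'' in precisely the hypergraph appearing in the definition of $r(f)$; once that dictionary is in place the corollary is immediate from \cref{thm:positive-polynomial-LO}. For the $\GG(n,p)$ consequence, the only input needed is the elementary fact that $K_n$ admits $\Omega(n^2)$ edge-disjoint copies of a fixed $H$, which I would justify in one line by the greedy argument above.
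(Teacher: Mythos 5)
Your proof is correct and follows exactly the route the paper intends: encode $X_H(G_p)$ as a multilinear polynomial in edge-indicator variables (degree $|E(H)|$, nonnegative coefficients), observe that $r$ edge-disjoint copies of $H$ give a matching of size $r$ in the hypergraph defining $r(f)$, and invoke \cref{thm:positive-polynomial-LO}; the $\GG(n,p)$ case then follows from the standard fact that $K_n$ packs $\Omega(n^2)$ edge-disjoint copies of a fixed $H$. The paper presents this corollary as an immediate consequence of the preceding discussion rather than writing out a proof, but the argument you give is precisely the one being invoked.
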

We believe that in $\GG(n,p)$, the above bound is far from optimal.
\begin{conjecture}
\label{conj:H-count}Fix $p\in\left(0,1\right)$ and fix a graph $H$
with $h$ non-isolated vertices. Let $G\in\GG\left(n,p\right)$. Then
for any $x\in\NN$,
\[
\Pr\left(X_{H}=x\right)=O\left(1/\sqrt{\Var\left(X_{H}\right)}\right)=O\left(1/n^{h-1}\right).
\]
\end{conjecture}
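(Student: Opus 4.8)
Since \cref{conj:H-count} is stated as a conjecture, the following is a proposed line of attack rather than a proof. First, one records the classical fact that for fixed $H$ with $h$ non-isolated vertices and fixed $p$ one has $\Var\left(X_{H}\right)=\Theta\left(n^{2h-2}\right)$: writing $X_{H}=\sum_{\phi}Y_{\phi}$ over the $\Theta(n^{h})$ copies of $H$, a pair $Y_{\phi},Y_{\psi}$ has nonzero covariance only if $\phi$ and $\psi$ share an edge, and the dominant contribution comes from the $\Theta(n^{2h-2})$ pairs sharing exactly one edge (each contributing covariance $\Theta_{p}(1)$). In particular the two expressions in the conjecture agree, so it suffices to prove $\Pr\left(X_{H}=x\right)=O\left(n^{-(h-1)}\right)$. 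Since $X_{H}$ is integer-valued, the plan is to use the Esseen-type bound $\Pr\left(X_{H}=x\right)\le\frac{1}{2\pi}\int_{-\pi}^{\pi}\left|\varphi(t)\right|\d t$, with $\varphi(t):=\E\,e^{itX_{H}}$, and to estimate $\varphi$ separately on a ``central'' window $\left|t\right|\le T_{0}$ and on the ``tail'' range $T_{0}\le\left|t\right|\le\pi$, where $T_{0}$ has order $1/\sqrt{\Var\left(X_{H}\right)}=\Theta\left(n^{-(h-1)}\right)$.

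On the central window I would invoke a quantitative central limit theorem for $X_{H}$. Standard cumulant bounds for sums with a sparse dependency graph --- here each $Y_{\phi}$ is dependent on only $\Theta(n^{h-2})$ of the others, which gives $\left|\kappa_{k}\left(X_{H}\right)\right|\lesssim_{k}n^{h+(k-1)(h-2)}$ --- imply $\left|\varphi(t)\right|\le e^{-c\,t^{2}\Var\left(X_{H}\right)}$ uniformly for $\left|t\right|\le T_{0}$; the point is simply that $T_{0}\asymp n^{-(h-1)}$ lies comfortably inside the range where the quadratic term of $\log\varphi$ dominates the higher-order terms. This part of the integral then contributes $O\left(1/\sqrt{\Var\left(X_{H}\right)}\right)=O\left(n^{-(h-1)}\right)$, as required, so this step is essentially bookkeeping on known estimates (it could also be approached via the Stein-method central limit theorem of Barbour, Karo\'nski and Ruci\'nski).

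The crux is the tail range, where one must show $\int_{T_{0}\le\left|t\right|\le\pi}\left|\varphi(t)\right|\d t=O\left(n^{-(h-1)}\right)$, i.e.\ that $\left|\varphi(t)\right|$ is minuscule for all but a negligible set of such $t$. Here I would follow the approach of Gilmer and Kopparty for triangles (later extended by Berkowitz and others): condition on the restriction of $G$ to all but a bounded vertex set $S$, chosen so that, conditionally, $X_{H}$ equals a fixed constant plus a low-degree polynomial in the edge-variables meeting $S$ whose coefficients are (co)degrees in the revealed graph --- quantities of order $n,n^{2},\dots$ --- and then apply a Littlewood--Offord-type estimate to the conditional characteristic function. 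For a linear form $\sum_{j}a_{j}\xi_{j}$ this equals $\prod_{j}\left|1-p+p\,e^{ita_{j}}\right|\le\exp\bigl(-cp\sum_{j}\|a_{j}t/(2\pi)\|^{2}\bigr)$, which is exponentially small unless $t$ is very close to a rational of small denominator. I expect two genuine obstacles for general $H$ (as opposed to $K_{3}$). First, when $H$ has no vertex of degree $1$, every copy through a fixed vertex uses at least two of that vertex's edges, so the conditional object is honestly nonlinear, and one needs a decoupling step or an iterated vertex-revealing scheme to reduce to linear (or otherwise controllable) forms. Second, one must handle the exceptional $t$ near small-denominator rationals, in particular near $\pi$ --- either by showing there are few of them and $\left|\varphi(t)\right|=O\left(n^{-(h-1)}\right)$ there (perhaps via a second conditioning exploiting the fluctuations and mixed parities of the relevant (co)degree sequences in $\GG(n,p)$), or by a direct arithmetic argument. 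A cleaner alternative route, bypassing the tail estimate entirely, would be to prove a full local central limit theorem for $X_{H}$ in $\GG(n,p)$: this is known for $H=K_{3}$ and some other families and immediately yields the conjectured bound, so from this angle the missing ingredient is an extension of the local-limit machinery to arbitrary $H$.
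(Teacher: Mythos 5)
The statement you were given is labelled as a conjecture, and the paper does not prove it; you are right to treat it as an open problem and to offer a line of attack rather than a proof. What the paper actually proves toward \cref{conj:H-count}, all via the combinatorial Littlewood--Offord inequality \cref{thm:rough-LO}, is the coarse-scale bound $\Pr\left(\left|X_H - x\right| \le n^{h-2}\right) = O(1/n)$ (\cref{thm:general-random-graphs}) and, for cliques, the near-optimal $\Pr\left(X_{K_h} = x\right) \le n^{o(1)+1-h}$ (\cref{thm:clique-counts}); those methods carry an inherent $n^{o(1)}$ loss at the finest scale. Your proposed route is genuinely different: Fourier inversion for lattice-valued variables, a central window controlled by cumulant bounds for sums with a sparse dependency graph, and a tail range controlled by conditioning plus Littlewood--Offord estimates on the conditional characteristic function. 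This is precisely the Gilmer--Kopparty/Berkowitz local-limit-theorem machinery, which the paper itself cites as the argument that resolves (and strengthens) \cref{conj:H-count} when $H$ is a clique. So your plan sits on the analytic side of the ledger where the sharpest clique results live, whereas the paper's approach is more elementary and combinatorial but lossier.

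Your preliminary computations are sound: $\Var(X_H) = \Theta(n^{2h-2})$ with the dominant contribution from pairs of copies sharing exactly one edge; each $Y_\phi$ is dependent on $\Theta(n^{h-2})$ others; and the cumulant order $\left|\kappa_k(X_H)\right| \lesssim_k n^{h+(k-1)(h-2)}$ is correct and does make the quadratic term dominate on a window of width $\Theta(1/\sqrt{\Var(X_H)})$, which already yields the central-window contribution $O\left(1/\sqrt{\Var(X_H)}\right)$. The two obstacles you flag for the tail are real and are exactly where the program has not been carried out for general $H$: (i) when $\delta(H)\ge 2$ the conditional count after exposing a bounded vertex set is genuinely nonlinear in the unexposed edge-indicators, so the one-vertex linearization that works for $K_3$ fails and one needs a decoupling or iterated-exposure step; and (ii) the exceptional $t$ near rationals of small denominator (especially $t$ near $\pi$) need a separate argument, since the Littlewood--Offord decay degenerates there. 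Neither the paper nor the works it cites close these for arbitrary $H$, so what you have is a well-informed and plausible research plan with two identified holes, not a proof; since the paper offers no proof of \cref{conj:H-count} either, there is no paper argument to compare against.
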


\cref{conj:H-count} would imply that $Q_{X_{H}}\left(t\right)=O\left(\left(t+1\right)n^{1-h}\right)$.
If true, this is best-possible; anything stronger would contradict
the central limit theorem known to hold for $X_{H}$. Although it
is not obvious how to prove \cref{conj:H-count}, we can use \cref{thm:rough-LO}
to obtain the optimal bound $Q_{X_{H}}\left(n^{h-2}\right)=O\left(1/n\right)$
for anti-concentration at a ``coarse'' scale.
\begin{thm}
\label{thm:general-random-graphs}Fix $p\in\left(0,1\right)$ and
fix a graph $H$ with $h$ vertices and at least one edge. Let $G\in\GG\left(n,p\right)$.
Then for any $x\in\NN$,
\[
\Pr\left(\left|X_{H}-x\right|\le n^{h-2}\right)=O\left(1/n\right).
\]
\end{thm}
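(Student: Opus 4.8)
The plan is to apply \cref{thm:rough-LO} to $X_{H}$, viewed as a function of the $N=\binom n2$ edge-indicator variables of $G$, together with a short covering argument. Applied with $N$ variables, \cref{thm:rough-LO} produces a bound of order $\max_{t}\binom Nt p^{t}(1-p)^{N-t}+o(1/\sqrt N)=O(1/\sqrt N)=O(1/n)$, which is exactly the target, \emph{provided} we can verify its hypothesis. Writing $\Delta_{e}(\x)$ for the discrete derivative of $X_{H}$ in the edge slot $e$ --- that is, the number of copies of $H$ in $G\cup\{e\}$ that use the edge $e$ --- the hypothesis amounts to finding a parameter $s$ with $\Pr(\Delta_{e}(\x)\le 2s)\le N^{-\omega(1)}=n^{-\omega(1)}$ for every edge slot $e$.

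A preliminary reduction lets us assume $H$ has no isolated vertices. If $H$ has $j\ge1$ isolated vertices and $H_{0}$ is $H$ with these removed (so $H_{0}$ has $h_{0}:=h-j\ge2$ vertices and at least one edge), then $X_{H}$ is a fixed polynomial in $n$ of degree $j$ with positive leading coefficient times $X_{H_{0}}$ (namely $\binom{n-h_{0}}{j}X_{H_{0}}$ in the subgraph-counting convention), so $\Pr(|X_{H}-x|\le n^{h-2})\le\Pr(|X_{H_{0}}-x'|\le D\,n^{h_{0}-2})$ for a suitable $x'$ and a constant $D$ depending only on $H$. Hence it suffices to establish the following statement for all graphs $F$ with no isolated vertices: for every constant $D>0$ one has $\Pr(|X_{F}-y|\le D\,n^{h_{F}-2})=O(1/n)$, where $h_{F}=|V(F)|$. (The theorem itself is the case $D=1$.)

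So fix such an $F$. A direct computation gives $\E[\Delta_{e}(\x)]=(C+o(1))\,n^{h_{F}-2}$ uniformly over all edge slots $e$, where $C=C(F,p)>0$; when $h_{F}=2$ we simply have $\Delta_{e}\equiv1$ and $F=K_{2}$, and the claim follows at once from \cref{thm:rough-LO} with $s<1/2$, so assume $h_{F}\ge3$. The key point is a lower-tail bound: $\Pr\bigl(\Delta_{e}(\x)\le\tfrac12\E[\Delta_{e}(\x)]\bigr)\le n^{-\omega(1)}$. Neither Chebyshev's inequality (which only gives failure probability $O(1/n)$) nor the Azuma/Lipschitz bound (the one-edge Lipschitz constant of $\Delta_{e}$ is of order $n^{h_{F}-3}$, far too large) is strong enough here; instead I would use Janson's inequality (see \cite{JLR00}). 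Writing $\Delta_{e}=\sum_{\alpha}I_{\alpha}$, where $\alpha$ ranges over the potential copies of $F$ through $e$ and $I_{\alpha}$ indicates that all edges of $\alpha$ other than $e$ are present in $G$, the relevant quantity $\overline\Delta=\sum\E[I_{\alpha}I_{\beta}]$, summed over ordered pairs whose edge sets share an edge besides $e$, is governed by pairs of copies meeting in exactly three vertices (pairs meeting only in the two endpoints of $e$ share no further edge, and larger overlaps contribute less), of which there are $O(n^{2h_{F}-5})$; thus $\overline\Delta=O(n^{2h_{F}-5})$ and Janson yields $\Pr(\Delta_{e}\le\tfrac12\E[\Delta_{e}])\le\exp(-\Omega(\E[\Delta_{e}]^{2}/\overline\Delta))=\exp(-\Omega(n))$, uniformly in $e$. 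I expect this overlap estimate to be the only genuinely technical part of the argument, and it is routine.

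To conclude, set $s:=\tfrac{C}{8}\,n^{h_{F}-2}$, so that $2s\le\tfrac12\E[\Delta_{e}(\x)]$ for all large $n$ and every $e$; the bound above then verifies the hypothesis of \cref{thm:rough-LO}, giving $\Pr(|X_{F}-y|<s)=O(1/n)$ for every $y\in\RR$. Since $s=\Theta(n^{h_{F}-2})$, the interval $[y-D\,n^{h_{F}-2},\,y+D\,n^{h_{F}-2}]$ is covered by $\lceil 8D/C\rceil+1=O(1)$ intervals of the form $(y_{j}-s,\,y_{j}+s)$, and a union bound over these finitely many intervals gives $\Pr(|X_{F}-y|\le D\,n^{h_{F}-2})=O(1/n)$, as required. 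Note that it is essential to allow $s$ to be \emph{smaller} than $n^{h_{F}-2}$ (the constant $C$ may well be tiny): a single window of radius $s$ need not cover the target interval, but a bounded number of them does.
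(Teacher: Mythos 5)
Your proof is correct and follows the same high-level route as the paper: view $X_H$ as a function of the $\binom n2$ edge indicators, apply \cref{thm:rough-LO}, verify its hypothesis by showing $\E[\Delta_e]=\Theta(n^{h-2})$ together with a superpolynomial lower-tail bound for $\Delta_e$, and then cover the target interval with $O(1)$ windows of radius $s=\Theta(n^{h-2})$. The genuine difference lies in the tool used for the lower-tail bound on $\Delta_e$. You reduce to the case of no isolated vertices so that $\Delta_e$ is a sum of indicators of increasing events, and then invoke Janson's inequality, estimating $\overline\Delta=O(n^{2h_F-5})$ by noting that any two overlapping copies through $e$ share at least three vertices; this yields the required $\exp(-\Omega(n))$ and is perfectly valid. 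However, your dismissal of the Azuma--Hoeffding approach is too hasty: you are right that an edge-by-edge martingale (with $\binom n2$ steps and Lipschitz constant $O(n^{h-3})$) only gives $\exp(-\Omega(1))$, but the paper instead uses \emph{vertex} exposure, which has only $n$ steps, and changing all edges at a single vertex $v\notin e$ still changes $\Delta_e$ by at most $O(n^{h-3})$; this gives $\exp\bigl(-\Omega(n^{2(h-2)}/(n\cdot n^{2(h-3)}))\bigr)=\exp(-\Omega(n))$. That route also handles isolated vertices directly, so the paper does not need your preliminary reduction. Both concentration arguments work; Janson is a natural alternative and perhaps a bit cleaner conceptually, while vertex-exposure Azuma is slightly more elementary and is in fact reused elsewhere in the paper (in the proof of \cref{thm:dispersed}).
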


The short deduction of \cref{thm:general-random-graphs} appears in
\cref{sec:subgraph-counts}.

With a bit more effort, one can combine \cref{thm:rough-LO} with some inductive arguments to prove an almost-optimal bound in the case where $H$ is a clique.

\begin{thm}
\label{thm:clique-counts}Fix $p\in\left(0,1\right)$ and $h\in\NN$.
Then $\Pr\left(X_{K_{h}}=x\right)\le n^{o\left(1\right)+1-h}$ for
all $x\in\NN$.
\end{thm}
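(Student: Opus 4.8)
The plan is to induct on $h$. The base case $h=2$ is the Erd\H os--Littlewood--Offord theorem: $X_{K_2}$ is just the number of edges of $G$, a sum of $\binom n2$ i.i.d.\ Bernoullis, so $\Pr(X_{K_2}=x)=O(1/n)$, which is stronger than needed. For the inductive step, fix $h\ge 3$ and suppose the bound holds for $K_{h-1}$. The idea is to condition on almost all of the randomness and reduce to an instance of Theorem~\ref{thm:rough-LO}. Split the vertex set as $\{1,\dots,n\}=U\cup W$ with $|U|=\lfloor n/2\rfloor$, say. Reveal all edges inside $W$ and all edges between $U$ and $W$; the only randomness left is the edges inside $U$, which form a copy of $\GG(m,p)$ with $m=|U|=\Theta(n)$. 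Writing $\x$ for this remaining $\binom m2$-dimensional Bernoulli vector, $X_{K_h}$ becomes (after conditioning) a function $f(\x)$ of the form $f(\x)=X_{K_h}(G[U])+\big(\text{copies of }K_h\text{ using at least one edge inside }U\text{ and at least one vertex in }W\big)+\text{const}$. We want to apply Theorem~\ref{thm:rough-LO} to $f$ with scale parameter $s=n^{h-2-\varepsilon}$ (or a suitable $n^{h-2}/\text{polylog}$), and then iterate/bootstrap down the scale.

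The heart of the argument is verifying the hypothesis of Theorem~\ref{thm:rough-LO}: for each potential edge $e=\{u,u'\}$ inside $U$, the discrete derivative $\Delta_e(\x)$ — the number of $K_h$-copies that contain $e$ (once the other $\binom m2-1$ edges inside $U$ and all the revealed edges are fixed) — should exceed $2s$ except with probability $n^{-\omega(1)}$. Now $\Delta_e(\x)$ counts $K_h$'s through the edge $e$: these are in bijection with $(h-2)$-cliques in the common neighbourhood of $u$ and $u'$. A vertex $w\in W$ is a common neighbour of $u,u'$ iff both revealed edges $uw,u'w$ are present, which happens for $\Theta(n)$ of the $w\in W$ with probability $1-n^{-\omega(1)}$ by a Chernoff bound; restricted to those $\Theta(n)$ common neighbours in $W$, the revealed edges among them form a copy of $\GG(\Theta(n),p)$, and the number of $(h-2)$-cliques there is $\Theta(n^{h-2})$ with probability $1-n^{-\omega(1)}$ — this is exactly a lower-tail large-deviation estimate for clique counts in a dense random graph, for which one can invoke e.g.\ a Kim--Vu / Janson-type concentration inequality (or directly a second-moment plus martingale bound). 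Hence $\Delta_e(\x)\ge c\,n^{h-2}>2s$ with the required superpolynomially small failure probability, uniformly over $e$, after a union bound over the $O(n^2)$ edges $e$.

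Given the hypothesis, Theorem~\ref{thm:rough-LO} yields $\Pr(|f(\x)-x|<s)=O(1/\sqrt m)=O(1/\sqrt n)$, but this only controls anti-concentration of $X_{K_h}$ at the coarse scale $s\approx n^{h-2}$, and we want point probabilities. This is where the induction on $h$ re-enters: to pass from "coarse" to "fine" anti-concentration, observe that conditioning further on the edges inside $U$ reduces a point event $\{X_{K_h}=x\}$ to a statement about $K_h$-counts in the cross/revealed structure, which — for cliques — can be related to $K_{h-1}$-counts (the copies of $K_h$ through a fixed vertex $v$ correspond to $K_{h-1}$'s in $N(v)$). A clean way to organise this: let $N$ be a slowly growing scale, peel off $\log$-many dyadic scales between $1$ and $n^{h-2}$, and at each scale use a conditioning that exposes one more "layer" of vertices, invoking the inductive hypothesis for $K_{h-1}$ (which gives $n^{o(1)+2-h}$ point probabilities) to control the finer fluctuations while Theorem~\ref{thm:rough-LO} controls the coarse ones. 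Telescoping the $O(\log n)$ scales, each contributing a factor $n^{o(1)}$, produces the final bound $\Pr(X_{K_h}=x)\le n^{o(1)+1-h}$.

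I expect the main obstacle to be precisely this coarse-to-fine bootstrapping: making rigorous the claim that after conditioning, a point event for $X_{K_h}$ decomposes into a coarse-scale part (handled by Theorem~\ref{thm:rough-LO}) and a residual part governed by $K_{h-1}$-counts, with the errors multiplying correctly across $O(\log n)$ iterations without blowing up the $n^{o(1)}$ budget. The large-deviation input for $\Delta_e$ is standard, and the base case is immediate; the delicate accounting is in the recursion.
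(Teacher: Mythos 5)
Your high-level plan — induct on $h$, use \cref{thm:rough-LO} for coarse anti-concentration, then invoke the induction hypothesis for $K_{h-1}$ to push down to point probabilities — is essentially the paper's, and the verification of the discrete-derivative hypothesis for \cref{thm:rough-LO} via Chernoff/Azuma is fine. But the step you yourself flag as the "main obstacle" is precisely where there is a genuine gap: the "bootstrap over $O(\log n)$ dyadic scales" does not work as described, and the dependence between the coarse and fine parts of the decomposition is never addressed. Concretely: in your $U\cup W$ split you reveal all edges outside $U$, so after applying \cref{thm:rough-LO} to the edges inside $U$ there is no randomness left over which to "condition further" — a point event $\{X_{K_h}=x\}$ becomes deterministic once the edges inside $U$ are fixed, and there is no residual $K_{h-1}$-counting problem to hand to the inductive hypothesis. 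And if one tries instead to iterate a $U/W$-style conditioning across $O(\log n)$ scales and multiply probabilities, one gets something like $n^{-\Theta(\log n)}$, not $n^{o(1)+1-h}$; the events at successive scales do not nest in the way the telescoping sketch would require, and it is not specified which randomness is frozen and which is live at each stage.

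The paper avoids these issues by making a single two-step decomposition that is driven by a clean algebraic identity: fix one vertex $v$, so that $X_{K_h}=X_{K_h}(G-v)+X_{K_{h-1}}(G[N_G(v)])$. One then gets anti-concentration of the first summand at scale $n^{h-3/2}\log n$ from \cref{thm:rough-LO}, and anti-concentration of the second summand at point scale from the binomial fluctuations of $|N_G(v)|$ together with the induction on $h$; each contributes a factor of $O(\log n/\sqrt n)$ beyond the inductive bound of $n^{o(1)+2-h}$, yielding $n^{o(1)+1-h}$ without any multi-scale recursion. The central technical difficulty — which your proposal never confronts — is that $X_{K_h}(G-v)$ and $X_{K_{h-1}}(G[N_G(v)])$ both depend on $G-v$, so one cannot simply condition on a value of the coarse part and then apply the plain inductive bound to the fine part. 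The paper resolves this by proving a strictly stronger statement by induction (\cref{thm:dispersed}, the "dispersedness" property): with high probability, the \emph{fixed} graph $G-v$ is such that a uniformly random $k$-subset of its vertices induces any prescribed number of $(h-1)$-cliques with probability at most $n^{o(1)+2-h}$, and this holds simultaneously for all $k$ in a linear range. With this hypothesis, the anti-concentration of $X_{K_{h-1}}(G[N_G(v)])$ holds \emph{conditionally} on any typical outcome of $G-v$, which is exactly what the two-step decomposition needs. That strengthening of the induction hypothesis — not a multi-scale bootstrap — is the idea your proposal is missing, and without it the recursion does not close.
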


The proof of \cref{thm:clique-counts} appears in \cref{sec:subgraph-counts}. We note that, after we had proved \cref{thm:clique-counts} and were working on writing this paper, Berkowitz~\cite{Ber18} released a preprint proving a \emph{local limit theorem} that gives an asymptotic estimate for the point probabilities of $X_{K_h}$ in terms of the density of a normal distribution (see also~\cite{GK16,Ber16}). This local limit theorem directly implies \cref{thm:clique-counts} and a strengthening of \cref{conj:H-count} in the case where $H$ is a clique. However, we still feel that it is worthwhile to include the proof of \cref{thm:clique-counts} in this paper: our proof is simpler and more combinatorial, and with some more work the ideas can be generalised to give a comparable bound for a larger class of subgraphs $H$. In a separate paper~\cite{FKS2} we will introduce some additional ideas to generalise \cref{thm:clique-counts} to all connected $H$.

We remark that the number of cliques of each size is determined by
the Tutte polynomial of a graph (see for example \cite[Theorem~2.4]{MN04}),
so \cref{thm:clique-counts} has the following corollary.
\begin{cor}
\label{cor:tutte}
The probability that two independently chosen random graphs from $\GG\left(n,1/2\right)$
have the same Tutte polynomial is $n^{-\omega\left(1\right)}$.
\end{cor}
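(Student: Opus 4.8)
The plan is to combine \cref{thm:clique-counts} with a standard counting argument over the possible Tutte polynomials, so the whole deduction reduces to showing that the number of cliques of each size is a Tutte-polynomial invariant. Let $G,G'$ be independent samples from $\GG(n,1/2)$. For a graph on $n$ vertices, the number of $h$-cliques for $h=0,1,\dots,n$ is determined by the Tutte polynomial (using the cited fact, e.g.\ \cite[Theorem~2.4]{MN04}, which expresses these clique counts in terms of evaluations/coefficients of the Tutte polynomial). Hence, if $G$ and $G'$ have the same Tutte polynomial, then in particular $X_{K_h}(G)=X_{K_h}(G')$ for every $h$; it suffices to bound the probability of the latter event.

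First I would fix some slowly growing $h=h(n)\to\infty$, say $h=\lceil\log\log n\rceil$ (any function tending to infinity slowly enough that $n^{o(1)}$ in \cref{thm:clique-counts} is still $o(n^{\varepsilon})$ for the relevant range works; concretely it is enough that for each fixed constant $C$ the bound $n^{o(1)+1-h}$ beats $n^{-C}$ for large $n$, which holds once $h\to\infty$). Then condition on $G'$ and write
\[
\Pr\bigl(X_{K_h}(G)=X_{K_h}(G')\bigr)=\E_{G'}\Bigl[\Pr\bigl(X_{K_h}(G)=x\bigr)\big|_{x=X_{K_h}(G')}\Bigr]\le\max_{x\in\NN}\Pr\bigl(X_{K_h}(G)=x\bigr)\le n^{o(1)+1-h},
\]
where the last inequality is exactly \cref{thm:clique-counts} applied with this choice of $h$ (note $h$ is allowed to depend on $n$ there, since the exponent $o(1)$ absorbs any dependence). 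Since $h\to\infty$, the right-hand side is $n^{-\omega(1)}$. Finally, the event that $G$ and $G'$ have the same Tutte polynomial is contained in the event $X_{K_h}(G)=X_{K_h}(G')$, so its probability is also $n^{-\omega(1)}$, as claimed.

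The only genuinely delicate point is the interaction between the rate $o(1)$ in the exponent of \cref{thm:clique-counts} and the choice of $h=h(n)$: one must be sure that $h$ grows slowly enough that \cref{thm:clique-counts} still applies with a controlled error, yet fast enough that $n^{1-h}$ already forces a super-polynomially small bound. Any $h\to\infty$ does both jobs, so this is not a real obstacle, merely a remark to make carefully. Everything else — the reduction to a single clique count via conditioning on $G'$, and the appeal to the Tutte-polynomial determinacy of clique numbers — is routine.
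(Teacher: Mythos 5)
Your overall strategy is right — reduce matching Tutte polynomials to matching clique counts $X_{K_h}$, condition on $G'$, and invoke \cref{thm:clique-counts} — and this is the deduction the paper has in mind. However, there is a genuine gap in the parameter choice. \cref{thm:clique-counts} is stated for \emph{fixed} $h\in\NN$, and nothing in that theorem asserts that the $o(1)$ error term is uniform in $h$. Your claim that ``$h$ is allowed to depend on $n$ there, since the exponent $o(1)$ absorbs any dependence'' is not justified: for a growing $h=h(n)$, the implicit error in the exponent could in principle grow faster than $h$, in which case the bound $n^{o(1)+1-h}$ carries no content. So the phrase ``Any $h\to\infty$ does both jobs'' is exactly the step that fails.

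The repair is a standard diagonal argument, which you actually gesture at parenthetically but then abandon. To show the probability is $n^{-\omega(1)}$, it suffices to show that for every constant $C$ the probability is at most $n^{-C}$ for $n$ large. Fix $C$, take $h:=C+2$ (a constant, not a function of $n$), and apply \cref{thm:clique-counts} with this fixed $h$: for $n$ sufficiently large, $\max_{x\in\NN}\Pr(X_{K_h}(G)=x)\le n^{o(1)+1-h}\le n^{-C}$. Conditioning on $G'$ exactly as you do, and using that equality of Tutte polynomials forces $X_{K_h}(G)=X_{K_h}(G')$, gives $\Pr(\text{same Tutte polynomial})\le n^{-C}$ for $n$ large. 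Since $C$ was arbitrary, the corollary follows. Everything else in your write-up — the reduction via the Tutte-polynomial determinacy of clique counts, and the conditioning on $G'$ — is correct.
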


\cref{cor:tutte} improves on a bound of $O\left(1/\log n\right)$ for this probability due to Loebl,
Matou\v sek and Pangr\'ac~\cite[Corollary~1.3]{LMP04} (the study
of this question was motivated by a conjecture of Bollob\'as, Pebody
and Riordan~\cite{BPR00} that almost all graphs are determined by
their Tutte polynomial).

\subsection{Structure of the paper and outline of the proofs}

The rest of the paper is organised as follows. First, in \cref{sec:positive-polynomial-LO}
we prove \cref{thm:positive-polynomial-LO,thm:rough-LO}. The rough idea for both proofs is the same, and is motivated by Erd\H os' proof of the Littlewood--Offord theorem. We consider a process that flips the bits $\xi_i$ from zero to one in a random order, where we start with $\x$ being the all-zero vector, and end with $\x$ being the all-one vector. We show that our random variable $f(\x)$ tends to increase fairly substantially on each flip (for \cref{thm:positive-polynomial-LO}, this is where we use the assumption that the coefficients are nonnegative). We deduce that during our process, $f(\x)$ does not tend to spend very long in the vicinity of any given value. This can then be translated into an anti-concentration result.

Next, in \cref{sec:poisson} we prove \cref{thm:poisson-L-O}, \cref{prop:weak-bound} and \cref{thm:poisson-nonnegative}. First, \cref{prop:weak-bound} has a fairly routine proof, using the combinatorial Nullstellensatz. Second, \cref{thm:poisson-L-O,thm:poisson-nonnegative} are proved in a unified way, via a careful induction on $n$.


In the next two sections we give some applications: in \cref{sec:edge-ber}
we prove \cref{prop:bernoulli-sqrt,prop:bernoulli-1-e},
and in \cref{sec:subgraph-counts} we prove \cref{thm:general-random-graphs,thm:clique-counts}. These are all quite direct deductions from the theorems proved in \cref{sec:positive-polynomial-LO,sec:poisson}, with the exception of \cref{thm:clique-counts}. Roughly speaking, the idea for the proof of \cref{thm:clique-counts} is to fix a vertex $v$ and then decompose the random variable $X_{K_h}(G)$ (counting copies of $K_h$ in $G\in\GG(n,p)$) as $X_{K_{h}}=X_{K_{h}}\left(G-v\right)+X_{K_{h-1}}(G\left[N_{G}\left(v\right)\right])$. That is to say, every copy of $K_h$ in $G$ either does not use the vertex $v$, or it is comprised of the vertex $v$ and a copy of $K_{h-1}$ inside the neighbourhood $N_G(v)$ of $v$. We then apply \cref{thm:rough-LO} to $X_{K_{h}}\left(G-v\right)$, and deal with $X_{K_{h-1}}(G\left[N_{G}\left(v\right)\right])$ by induction on $h$. The main challenge for this approach is that the random variables $X_{K_{h}}\left(G-v\right)$ and $X_{K_{h-1}}(G\left[N_{G}\left(v\right)\right])$ are not independent.

Finally, \cref{sec:concluding} contains some concluding remarks,
including some open questions and some further miscellaneous results.

\subsection{Notation}

We use standard asymptotic notation throughout, and all asymptotics
are as $n\to\infty$ unless stated otherwise. For functions $f=f\left(n\right)$
and $g=g\left(n\right)$ we write $f=O\left(g\right)$ to mean there
is a constant $C$ such that $\left|f\right|\le C\left|g\right|$,
we write $f=\Omega\left(g\right)$ to mean there is a constant $c>0$
such that $f\ge c\left|g\right|$ for sufficiently large $n$, we
write $f=\Theta\left(g\right)$ to mean that $f=O\left(g\right)$
and $f=\Omega\left(g\right)$, and we write $f=o\left(g\right)$ or
$g=\omega\left(f\right)$ to mean that $f/g\to0$ as $n\to\infty$.

We also use standard graph-theoretic notation: $V\left(G\right)$ and
$E\left(G\right)$ are the sets of vertices and (hyper)edges of a
(hyper)graph $G$, and $v\left(G\right)$ and $e\left(G\right)$ are
the cardinalities of these sets. The subgraph of $G$ induced by a
vertex subset $U$ is denoted $G\left[U\right]$, the neighbourhood of a vertex $v$ in a graph $G$ is denoted $N_G(v)$, and the degree of $v$ is denoted $\deg_{G}\left(v\right)=|N_G(v)|$.

For a zero-one vector $\boldsymbol{x}\in\left\{ 0,1\right\} ^{n}$,
we write $\left|\boldsymbol{x}\right|$ for the number of entries
that are ones. For a real number $x$, the floor and ceiling functions
are denoted $\floor x=\max\left\{ i\in\ZZ:i\le x\right\} $ and $\ceil x=\min\left\{ i\in\ZZ:i\ge x\right\} $.
Finally, all logarithms are in base $e$.

\subsection{Concentration inequalities}

For the convenience of the reader, in this section we collect some standard concentration inequalities that will be used throughout the paper (since these inequalities are standard, we will refer to them by name and not by their theorem number). First, we will frequently need to use Chernoff bounds for the binomial and hypergeometric distributions. The following bounds  can be found in \cite[Corollary~2.3 and Theorem~2.10]{JLR00}.

\begin{lem}[Chernoff bound]
Suppose $X$ has a binomial or hypergeometric distribution, and consider $0<\varepsilon\le 3/2$. Then
$$\Pr(|X-\E X|\ge \varepsilon \E X)\le 2\exp\left(-\frac{\varepsilon^2}{3}\E X\right).$$
\end{lem}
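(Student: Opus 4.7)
The plan is to prove the Chernoff bound via the classical exponential moment (Bernstein) method for the binomial case, and then reduce the hypergeometric case to the binomial case through Hoeffding's coupling.

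First I would handle the binomial case. Writing $X=\xi_{1}+\dots+\xi_{n}$ as a sum of independent Bernoullis with means $p_{i}$, Markov's inequality applied to $e^{tX}$ for $t>0$ gives
$$\Pr(X\ge (1+\varepsilon)\mu)\le e^{-t(1+\varepsilon)\mu}\prod_{i=1}^{n}\E e^{t\xi_{i}},$$
where $\mu=\E X=\sum p_{i}$. Using $\E e^{t\xi_{i}}=1-p_{i}+p_{i}e^{t}\le \exp(p_{i}(e^{t}-1))$, the product is bounded by $\exp(\mu(e^{t}-1))$. Optimizing in $t$ at $t=\log(1+\varepsilon)$ yields the well-known estimate
$$\Pr(X\ge (1+\varepsilon)\mu)\le \left(\frac{e^{\varepsilon}}{(1+\varepsilon)^{1+\varepsilon}}\right)^{\mu},$$
and an analogous calculation with $t<0$ handles the lower tail, producing $\bigl(e^{-\varepsilon}/(1-\varepsilon)^{1-\varepsilon}\bigr)^{\mu}$.

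The next step is to reduce these to the clean form $\exp(-\varepsilon^{2}\mu/3)$. This amounts to verifying the real-analytic inequalities $(1+\varepsilon)\log(1+\varepsilon)-\varepsilon\ge \varepsilon^{2}/3$ for $0<\varepsilon\le 3/2$, and $(1-\varepsilon)\log(1-\varepsilon)+\varepsilon\ge \varepsilon^{2}/3$ for $0<\varepsilon<1$. For the upper-tail inequality I would set $h(\varepsilon):=(1+\varepsilon)\log(1+\varepsilon)-\varepsilon-\varepsilon^{2}/3$, note $h(0)=h'(0)=0$, and observe that $h''(\varepsilon)=1/(1+\varepsilon)-2/3\ge 0$ exactly on $[0,1/2]$, giving $h\ge 0$ there; the interval $[1/2,3/2]$ can then be dispatched by a direct monotonicity check of $h'$. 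Combining the two tails with a union bound supplies the factor of $2$.

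Finally, for the hypergeometric case I would invoke Hoeffding's classical coupling: if $X$ counts the successes in $k$ draws without replacement from a finite population, then for every convex function $\varphi$ one has $\E\varphi(X)\le \E\varphi(Y)$ where $Y$ has the binomial distribution with the same mean. Applying this to $\varphi(x)=e^{tx}$ shows that the moment generating function of the hypergeometric $X$ is dominated by that of the corresponding binomial, so the entire exponential moment argument above transfers verbatim. The main obstacle in this plan is the analytic verification of the cubic-rate inequality on the full interval $(0,3/2]$: pushing the constant all the way to $1/3$ requires handling the regime $\varepsilon>1/2$ by hand, since a naive second-order Taylor argument would only yield a weaker bound such as $\varepsilon^{2}/4$.
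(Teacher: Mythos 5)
Your proposal is correct, and it supplies a self-contained proof where the paper gives none: the paper simply cites this as a standard fact (Corollary~2.3 and Theorem~2.10 of Janson--\L uczak--Ruci\'nski), where it is derived from the exponential-moment bounds $\Pr(X\ge\mu+t)\le\exp\left(-\tfrac{t^2}{2(\mu+t/3)}\right)$ and $\Pr(X\le\mu-t)\le\exp\left(-\tfrac{t^2}{2\mu}\right)$, with the hypergeometric case handled exactly as you propose, via Hoeffding's convex-ordering between sampling without and with replacement. Your route is the same Chernoff/Bernstein method, differing only in the intermediate form: you keep the entropy-form bound $\bigl(e^{\varepsilon}/(1+\varepsilon)^{1+\varepsilon}\bigr)^{\mu}$ and verify the calculus inequality $(1+\varepsilon)\log(1+\varepsilon)-\varepsilon\ge\varepsilon^{2}/3$ on $(0,3/2]$ directly, whereas in the cited form the hypothesis $\varepsilon\le 3/2$ enters trivially through $2(1+\varepsilon/3)\le 3$. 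What your version buys is independence from the reference at the cost of a slightly delicate endpoint check; what the cited form buys is that the role of the constant $3/2$ is transparent.

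Two small points to tighten. First, monotonicity of $h'$ on $[1/2,3/2]$ is not by itself enough: since $h'$ changes sign there, $h$ is minimized on that interval at an endpoint, so you must also verify $h(3/2)=\tfrac52\log\tfrac52-\tfrac94\approx 0.04>0$ (this is genuinely tight --- the inequality already fails at $\varepsilon=2$, where $3\log 3-\tfrac{10}{3}<0$, so no soft argument will avoid an endpoint evaluation). Second, your lower-tail inequality is stated only for $0<\varepsilon<1$, but the lemma allows $\varepsilon$ up to $3/2$; you should add the (trivial) remark that for $\varepsilon>1$ the event $X\le(1-\varepsilon)\E X$ is empty since $X\ge 0$, and at $\varepsilon=1$ one has $\Pr(X=0)\le e^{-\E X}\le e^{-\E X/3}$ (transferring to the hypergeometric case by the same convex-ordering, or directly). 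With these additions the argument is complete.
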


Second, we will need (a simple consequence of) the Azuma--Hoeffding inequality, as follows. See for example \cite[Corollary~2.27]{JLR00}.

\begin{lem}
Let $X_1,\dots,X_n$ be indepenent random variables, and let $X=f(X_1,\dots,X_n)$ be some function of these random variables. Suppose that if we change the value of some $X_i$, then the value of $X$ changes by at most $c$. Then for every $t>0$, we have
$$\Pr(|X-\E X|\ge t)\le 2\exp\left(-\frac{t^2}{2nc^2}\right).$$
\end{lem}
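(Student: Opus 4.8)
The plan is to give the standard Doob-martingale (``variable exposure'') proof of this bounded-differences inequality: a telescoping martingale decomposition combined with Hoeffding's lemma on bounded mean-zero random variables. First I would introduce the filtration $\mathcal{F}_k := \sigma(X_1,\dots,X_k)$ for $0 \le k \le n$ and the Doob martingale $Y_k := \E[X \mid \mathcal{F}_k]$, so that $Y_0 = \E X$ and $Y_n = X$, and set $D_k := Y_k - Y_{k-1}$. The structural heart of the argument is the claim that $|D_k| \le c$ almost surely, together with $\E[D_k \mid \mathcal{F}_{k-1}] = 0$. For the former: fix the values of $X_1,\dots,X_{k-1}$; by independence, $Y_k$ is then the deterministic function of $X_k$ obtained by averaging $f$ over the distribution of $X_{k+1},\dots,X_n$, and changing the value of $X_k$ changes $f$ pointwise by at most $c$, hence changes this average by at most $c$; since $Y_{k-1}=\E[Y_k\mid\mathcal{F}_{k-1}]$ is a weighted average (over $X_k$) of these values of $Y_k$, it follows that $|D_k| = |Y_k - Y_{k-1}| \le c$.

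Next comes the Chernoff/exponential-moment computation. Fix $\lambda > 0$. Since $D_k$ is conditionally mean-zero given $\mathcal{F}_{k-1}$ with $|D_k| \le c$, Hoeffding's lemma gives $\E[e^{\lambda D_k}\mid\mathcal{F}_{k-1}] \le e^{\lambda^2 c^2 / 2}$. Peeling off factors one at a time via the tower property (conditioning on $\mathcal{F}_{n-1}$, then $\mathcal{F}_{n-2}$, and so on), we get $\E\bigl[e^{\lambda(X - \E X)}\bigr] = \E\bigl[e^{\lambda \sum_{k=1}^{n} D_k}\bigr] \le e^{n \lambda^2 c^2 / 2}$. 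Markov's inequality then yields $\Pr(X - \E X \ge t) \le e^{-\lambda t + n\lambda^2 c^2/2}$ for every $\lambda > 0$, and the optimal choice $\lambda = t/(nc^2)$ gives $\Pr(X - \E X \ge t) \le \exp(-t^2/(2nc^2))$. Applying the identical argument to $-f$ (which satisfies the same hypothesis with the same constant $c$) bounds the lower tail $\Pr(X - \E X \le -t)$ by the same quantity, and a union bound over the two tails produces the factor of $2$.

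The one ingredient that is not pure bookkeeping is Hoeffding's lemma, which I would prove in the usual way: for a mean-zero random variable $D$ with $|D| \le c$, set $\psi(\lambda) := \log\E[e^{\lambda D}]$; then $\psi(0)=\psi'(0)=0$, while $\psi''(\lambda)$ equals the variance of $D$ under the exponentially tilted law $\mathbb{P}_\lambda$ with density proportional to $e^{\lambda D}$, which is at most $c^2$ since $D$ still lies in $[-c,c]$ under $\mathbb{P}_\lambda$; a second-order Taylor expansion then gives $\psi(\lambda) \le \lambda^2 c^2/2$. I expect this lemma to be the main obstacle (though it is entirely standard), everything else being routine conditioning and a one-variable optimisation. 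One could shave the constant by noting that $D_k$ in fact ranges over an interval of length at most $c$ and using the $(b-a)^2/8$ form of Hoeffding's lemma --- this even gives the sharper McDiarmid bound $\exp(-2t^2/(nc^2))$ per tail --- but the cruder estimate above already yields exactly the inequality as stated.
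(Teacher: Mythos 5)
Your proof is correct: the Doob martingale $Y_k=\E[X\mid X_1,\dots,X_k]$ with bounded differences, Hoeffding's lemma, the tower-property peeling, and the optimisation $\lambda=t/(nc^2)$ all check out and do yield the stated bound (indeed, as you note, even a slightly stronger one). The paper does not prove this lemma at all --- it quotes it as a standard consequence of the Azuma--Hoeffding inequality, citing \cite[Corollary~2.27]{JLR00} --- and your argument is precisely the standard proof underlying that citation, so there is nothing further to reconcile.
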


\section{Generalising Littlewood--Offord to nonnegative polynomials\label{sec:positive-polynomial-LO}}

In this section we prove \cref{thm:positive-polynomial-LO,thm:rough-LO}.
\begin{proof}[Proof of \cref{thm:positive-polynomial-LO}]
Let $H$ be the hypergraph in the definition of $r\left(f\right)$,
with a hyperedge for each coefficient of $f$ with size at least $1$. Let $M$ be
a matching of size $r=r\left(f\right)$ in this hypergraph, and condition
on any outcome of the variables whose indices do not appear in $M$. We may
assume the remaining variables (corresponding to the vertices of $M$) are $\xi_{1},\dots,\xi_{N}$, with
$r\le N\le rd$. Then, $f\left(\x\right)$ is a polynomial in $\xi_{1},\dots,\xi_{N}$.
Abusing notation, we write $\x=\left(\xi_{1},\dots,\xi_{N}\right)$;
we will not need to worry about the specific values of any of the
$\xi_{i}$ we have conditioned on.

Also, it will be more convenient to estimate probabilities of the form $\Pr(|f(\x)-x|<1/2)$ than of the form $\Pr(|f(\x)-x|<1)$. It suffices to show that $\Pr(|f(\x)-x|<1/2)=O(1/\sqrt r)$, because we can cover the length-2 interval $\{y:|f(\x)-x|<1\}$ with three (open) length-1 intervals. For the rest of the proof we fix some $x\in \RR$.

Choose $N_{1}=pN-o\left(r\right)$ and $N_{2}=pN+o\left(r\right)$ so that $\Pr\left(N_{1}\le\left|\x\right|\le N_{2}\right)=1-o\left(1/\sqrt{r}\right)$
(such $N_{1},N_{2}$ exist by the Chernoff bound). Let $\sigma:\left\{ 1,\dots,N\right\} \to\left\{ 1,\dots,N\right\} $
be a uniformly random permutation, and let $\x^{t}$ be the length-$N$
zero-one vector with a 1 in positions $\sigma\left(1\right),\dots,\sigma\left(t\right)$.
Let $Y$ be the number of $t$ satisfying $N_{1}\le t\le N_{2}$ and
$|f\left(\x^{t}\right)-x|<1/2$.

Recall that $\max_{t}\binom{N}{t}p^{t}\left(1-p\right)^{N-t}=\Theta(1/\sqrt N)$ (one can prove this with Stirling's approximation). We can use linearity of expectation to estimate $\E Y$, as follows (recalling that $r=\Theta(n)$).
\begin{align}
\E Y & =\sum_{t=N_{1}}^{N_{2}}\frac{|\{ \boldsymbol{x}\in\left\{ 0,1\right\} ^{N}:\left|\boldsymbol{x}\right|=t,\;|f\left(\boldsymbol{x}\right)-x|<1/2\} |}{\binom{N}{t}}\nonumber \\
 & =\sum_{t=N_{1}}^{N_{2}}\frac{\Pr\left(|f\left(\x\right)-x|<1/2\text{ and }\left|\x\right|=t\right)}{\binom{N}{t}p^{t}\left(1-p\right)^{N-t}}\nonumber \\
 & \ge\frac{1}{\max_{t}\binom{N}{t}p^{t}\left(1-p\right)^{N-t}}\sum_{t=N_{1}}^{N_{2}}\Pr\left(|f\left(\x\right)-x|<1/2\text{ and }\left|\x\right|=t\right)\nonumber \\
 & =\Theta\left(\sqrt{N}\right)\left(\tallphantom\Pr\left(|f\left(\x\right)-x|<1/2\right)-\Pr\left(\left|\x\right|<N_{1}\right)-\Pr\left(\left|\x\right|>N_{2}\right)\right)\nonumber \\
 & =\Theta\left(\sqrt{r}\right)\Pr\left(|f\left(\x\right)-x|<1/2\right)-o\left(1\right).\label{eq:EY-poly}
\end{align}
We can also estimate $\E Y$ a different way, using the relationship between the $\x^t$. Let $X_{t}$ be the number
of $e\in M$ such that $e\cap\sigma\left(\left\{ 1,\dots,t\right\} \right)=\left|e\right|-1$
(that is, all but one of the elements of $e$ have been ``activated''
by time $t$). If $t=pN+o(r)=(1+o(1))pN$ then the probability that any particular $e\in M$ contributes
to $X_{t}$ is $(1+o(1))\left|e\right|p^{\left|e\right|-1}\left(1-p\right)=\Theta\left(1\right)$,
so $\E X_{t}=\Theta\left(N\right)$. Also, changing $\sigma$ by a transposition changes $X_t$ by at most 2, as $M$ is a matching. So, by a McDiarmid-type concentration inequality for
random permutations (see for example \cite[Section~3.2]{McD98}), for each $N_{1}\le t\le N_{2}$ we have
$$\Pr(X_{t}< \E X_t/2)=\exp\left(-\Omega\left(\frac{(\E X_t/2)^2}{N\cdot 2^2}\right)\right)=e^{-\Omega(N)}.$$
Now, observe that $f(\x^t)$ is increasing in $t$, because $f$ has nonnegative coefficients. For $N_{1}\le t\le N_{2}$, let $\mathcal E_{t}$ be the event that $|f\left(\x^t\right)-x|<1/2$, but $f\left(\x^{s}\right)-x\le -1/2$ for $N_1\le s<t$ (that is, $t$ is the first time that $f(\x^t)$ enters the desired range). Note that $Y=0$ unless some $\mathcal E_{t}$ occurs.

For any $t$, condition
on a specific outcome of $\left(\sigma\left(1\right),\dots,\sigma\left(t\right)\right)$
such that $\mathcal E_{t}$ holds and such that $X_{t}\ge \E X_t/2=\Theta(N).$ Let $U$ be the set
of $i\notin\sigma\left(\left\{ 1,\dots,t\right\} \right)$ such that
there is $e\in M$ with $i\in e$ and $e\setminus\left\{ i\right\} \subseteq\sigma\left(\left\{ 1,\dots,t\right\} \right)$. By definition we have $\left|U\right|=X_{t}$. Let $\tau:=\min\{s\in \{t+1,\dots,N\}:\sigma\left(\tau\right)\in U\}$ be the first time that we have
$\sigma\left(\tau\right)\in U$. By the definition of $U$, some edge $e\in M$ will be ``activated'' at time $\tau$, so $f\left(\x^{\tau}\right)\ge f\left(\x^{t}\right)+1$ and in particular $|f\left(\x^{\tau}\right)-x|\ge 1/2$.
Under our conditioning, $\tau-t+1$ is stochastically dominated by the geometric
distribution $\Geom\left(X_{t}/\left(N-t\right)\right)$, which has
expected value $\left(N-t\right)/X_{t}=O\left(1\right)$. We have
proved that $\E\left[Y\cond \mathcal E_{t}\cap\left\{ X_{t}\ge \E X_t/2\right\} \right]=O\left(1\right)$.

Recall that we can have $Y>0$ only if some $\mathcal E_{t}$ occurs, and observe that the $\mathcal E_{t}$ are disjoint and that $Y\le N$ with probability
1. So,
\begin{align*}
\E Y&\le \sum_{t=N_1}^{N_2}\Pr(\mathcal E_t)\E\left[Y\cond \mathcal E_{t}\cap\left\{ X_{t}\ge \E X_t/2\right\} \right]+N\Pr\left(X_{t}<\E X_t/2\text{ for some $t$}\right)\\
&=O\left(1\right)\Pr\left(\mathcal E_{N_1}\cup\dots\cup\mathcal E_{N_2}\right)+N^2e^{-\Omega(N)}=O\left(1\right).
\end{align*}
Combining this with \cref{eq:EY-poly}, the desired result follows.
\end{proof}
\begin{proof}[Proof of \cref{thm:rough-LO}]
We proceed in almost the same way as in the proof of \cref{thm:positive-polynomial-LO}.

Let $\alpha:=\max_{t}\binom{n}{t}p^{t}\left(1-p\right)^{n-t}$, and observe that $\alpha=\Theta(1/\sqrt n)$ (this can be proved with Stirling's approximation; see for example \cite[Proposition~1]{Dun12}). Let $\left|\x\right|$ be the number of ones in $\x$, which has a binomial distribution. By the Chernoff bound, we can choose $n_{1}=pn-O\left(\sqrt{n\log n}\right)$ and $n_{2}=pn+O\left(\sqrt{n\log n}\right)$
such that $\Pr\left(n_{1}\le\left|\x\right|\le n_{2}\right)\ge1-o\left(1/\sqrt{n}\right)$. Observe that $\Pr\left(\left|\x\right|=t\right)\ge n^{-O\left(1\right)}$
for $n_{1}\le t\le n_{2}$ (this can be proved by comparison to the modal probability $\alpha$ or by direct computation using Stirling's inequality; see for example \cite[Proposition~1]{Dun12}). Let $\sigma:\left\{ 1,\dots,n\right\} \to\left\{ 1,\dots,n\right\} $
be a uniformly random permutation, and for each $0\le t\le n$ let $\x^{t}$ be the length-$n$
zero-one vector with a one in positions $\sigma\left(1\right),\dots,\sigma\left(t\right)$, and zeros in the other positions.
Let $Y$ be the number of $t$ satisfying $n_{1}\le t\le n_{2}$ and
$\left|f\left(\x^{t}\right)-x\right|<s$.

The same calculation as in the proof of \cref{thm:positive-polynomial-LO}
gives
\begin{equation}
\E Y\ge \alpha^{-1}\Pr\left(\left|f\left(\x\right)-x\right|<s\right)-o\left(1\right),\label{eq:EY-rough}
\end{equation}
but on the other hand, by the choice of $n_{1},n_{2}$, for all $i$ we have
\begin{align*}
\Pr\left(\Delta_{i}\left(\x^{t}\right)<2s\text{ for some }t\in \{n_{1},\dots,n_{2}\}\right) & \le\sum_{t=n_{1}}^{n_{2}}\Pr\left(\tallphantom\Delta_{i}\left(\x\right)<2s\cond\left|\x\right|=t\right)\\
 & \le\Pr\left(\Delta_{i}\left(\x\right)<2s\right)\sum_{t=n_{1}}^{n_{2}}\frac{1}{\Pr\left(\left|\x\right|=t\right)}\\
 & =n^{-\omega\left(1\right)}O\left(\sqrt{n\log n}\right)n^{O\left(1\right)}=n^{-\omega\left(1\right)}.
\end{align*}
Let $\mathcal E$ be the event that $\Delta_{i}\left(\x^{r}\right)\ge2s$ for all $i$ and all $n_{1}\le r\le n_{2}$, so that $\Pr(\overline{\mathcal E})=n\cdot n^{-\omega(1)}=n^{-\omega(1)}$. Note that $\Delta_i(\x^t)=f(\x^{t+1})-f(\x^{t})$ for $i=\sigma(t+1)$. Therefore, if $\mathcal E$ holds and $\left|f\left(\x^{t}\right)-x\right|<s$ for some $t$, then  $f\left(\x^{r}\right)-x\ge s$ for all $r$ satisfying $t< r\le n_{2}$. That is to say, if $\mathcal E$ holds  then $Y\le1$. Since $Y$ can never be greater than $n_2-n_1+1\le n$, it follows that
\[
\E Y\le 1\cdot \Pr ({\mathcal E})+n\Pr (\overline{\mathcal E})\le 1+n\cdot n^{-\omega\left(1\right)}\le 1+o(1).
\]
Combining this with \cref{eq:EY-rough}, we obtain $\Pr\left(\left|f\left(\x\right)-x\right|<s\right)\le(1+o(1))\alpha=\alpha+o(1/\sqrt n)$, as desired.
\end{proof}

\section{Poisson-type anti-concentration\label{sec:poisson}}

In this section we prove \cref{thm:poisson-L-O}, \cref{prop:weak-bound} and \cref{thm:poisson-nonnegative}.
First, \cref{prop:weak-bound} will be a corollary of the following
non-asymptotic bound for anti-concentration of polynomials of unbiased
coin flips.
\begin{lem}
\label{lem:unbiased-non-asymptotic}Consider a multilinear $n$-variable polynomial
$f$ with degree $d\ge1$, and let $\x\in\Ber\left(1/2\right)^{n}$.
Then for any $\ell\in\RR$,
\[
\Pr\left(f\left(\x\right)=\ell\right)\le1-2^{-d}.
\]
\end{lem}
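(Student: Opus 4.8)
The plan is to induct on the degree $d$ using the combinatorial Nullstellensatz, or rather its quantitative cousin (sometimes called the Alon--Füredi / Schwartz--Zippel-type bound over the grid $\{0,1\}^n$): a nonzero multilinear polynomial of degree $d$ on $\{0,1\}^n$ vanishes on at most a $(1-2^{-d})$-fraction of the grid. Concretely, I would first reduce to showing: if $g$ is a nonzero multilinear polynomial of degree at most $d$, then $\Pr(g(\x)=0)\le 1-2^{-d}$ for $\x\in\Ber(1/2)^n$. This gives the lemma by setting $g=f-\ell$, which is multilinear of degree $d\ge 1$ and not identically zero (its degree-$d$ part agrees with that of $f$), so $g$ is nonzero.

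For the reduction to the Nullstellensatz bound, I would argue by induction on $n$ (or on the number of variables actually appearing in $g$). Pick a variable $x_i$ that appears in $g$ and write $g=x_i g_1 + g_0$, where $g_0,g_1$ are multilinear polynomials not involving $x_i$, with $g_1\ne 0$ and $\deg g_1\le d-1$. Condition on the values of all variables other than $x_i$: if the restriction of $g_1$ is nonzero, then $g$ becomes a nonconstant affine function of $x_i$, hence vanishes for at most one of the two choices of $\xi_i$, i.e.\ with conditional probability at most $1/2$. Combining,
\[
\Pr(g(\x)=0)\le \Pr\bigl(g_1(\x)=0\bigr)\cdot 1 + \Pr\bigl(g_1(\x)\ne 0\bigr)\cdot \tfrac12.
\]
By the inductive hypothesis applied to $g_1$ (degree $\le d-1$, nonzero, fewer variables in the relevant sense), $\Pr(g_1(\x)=0)\le 1-2^{-(d-1)}$, so the right-hand side is at most $(1-2^{-(d-1)}) + 2^{-(d-1)}\cdot\tfrac12 = 1-2^{-d}$, completing the induction. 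The base case $d=0$ is a nonzero constant, which vanishes with probability $0=1-2^{0}$... wait, that's $1-1=0$, consistent; and $d=1$ is immediate since a nonconstant affine function of Bernoulli variables vanishes with probability at most $1/2$ (condition on all but one variable that appears).

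I do not anticipate a serious obstacle here; the only mild subtlety is bookkeeping in the induction — making sure that after conditioning, $g_1$ genuinely has smaller degree and that the "nonzero" hypothesis is preserved, which is why phrasing the induction in terms of a generic nonzero multilinear polynomial (rather than one of the special form $f-\ell$) is cleaner. One should also double-check the edge case where $g_1$ is a nonzero constant: then $g_1(\x)\ne 0$ always, so $\Pr(g(\x)=0)\le 1/2 = 1-2^{-1}\le 1-2^{-d}$, which is fine. Then Proposition \ref{prop:weak-bound} follows by a standard symmetrisation trick: given an arbitrary (not necessarily multilinear) degree-$\le d$ polynomial and $p\le 1/2$, one couples $\Ber(p)$ to $\Ber(1/2)$ (writing $\xi_i = \eta_i\zeta_i$ with $\eta_i\in\Ber(2p)$, $\zeta_i\in\Ber(1/2)$ independent, valid since $2p\le 1$), conditions on the $\eta_i$, reduces modulo $\zeta_i^2=\zeta_i$ to a multilinear polynomial in the $\zeta_i$ of degree $\le d$, and applies Lemma \ref{lem:unbiased-non-asymptotic}.
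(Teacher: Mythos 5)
Your proof is correct, and the core recursion
\[
\Pr\left(g\left(\x\right)=0\right)\le \Pr\left(g_1\left(\x\right)=0\right)+\tfrac12\Pr\left(g_1\left(\x\right)\ne 0\right)\le \left(1-2^{-(d-1)}\right)+2^{-(d-1)}\cdot\tfrac12=1-2^{-d}
\]
is exactly right. It takes a somewhat different route from the paper's proof: the paper fixes a degree-$d$ monomial, say $\xi_1\cdots\xi_d$, with nonzero coefficient, conditions on the other $n-d$ variables once, and then invokes the combinatorial Nullstellensatz as a black box (with $S_i=\{0,1\}$) to conclude that at least one of the remaining $2^d$ outcomes makes $f(\x)-\ell$ nonzero. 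You instead re-prove the underlying Alon--F\"uredi / Schwartz--Zippel-type bound over $\{0,1\}^n$ from scratch, by a self-contained induction that peels off one appearing variable at a time and splits on whether the ``top part'' $g_1$ vanishes. Both are short and standard; yours is more elementary in that it does not need \cref{lem:combinatorial-nullstellensatz} as input, while the paper's is a one-step application of a cited theorem. One small expository point: you announce the induction as ``on $n$,'' but the hypothesis you actually invoke for $g_1$ is the degree-$(d-1)$ bound, so the clean formulation is induction on $d$, quantified over all nonzero multilinear polynomials of that degree in any number of variables (this also resolves the ``bookkeeping'' worry you raise, since $g_1$ is a fixed polynomial of degree at most $d-1$ and is nonzero by the choice of $x_i$; no conditioning is involved in verifying this). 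Your deduction of \cref{prop:weak-bound} via the coupling $\xi_i=\eta_i\zeta_i$ with $\eta_i\in\Ber(2p)$ and $\zeta_i\in\Ber(1/2)$ matches the paper's exactly.
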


We prove \cref{lem:unbiased-non-asymptotic} with the combinatorial
Nullstellensatz, whose statement is as follows (see \cite[Theorem~1.2]{Alo99})
\begin{thm}
\label{lem:combinatorial-nullstellensatz}Let $f$ be an $n$-variable polynomial over an arbitrary field $\mathbb F$, with degree $\sum_{i=1}^{n}t_{i}$
(where each $t_{i}$ is a nonnegative integer). Suppose that the coefficient
of $x_{1}^{t_{1}}\dots x_{n}^{t_{n}}$ is nonzero. If $S_{1},\dots,S_{n}$
are subsets of $\mathbb F$ with $\left|S_{i}\right|>t_{i}$, then there is $\boldsymbol{s}\in S_{1}\times\dots\times S_{n}$ with $f(\boldsymbol s)\ne 0$.
\end{thm}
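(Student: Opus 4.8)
The plan is to follow Alon's original argument from \cite{Alo99}. The first ingredient I would isolate is the following key lemma: \emph{if $g$ is an $n$-variable polynomial over $\mathbb F$ with $\deg_{x_i}g\le t_i$ for each $i$, and $g$ vanishes on $T_1\times\dots\times T_n$ where $T_i\subseteq\mathbb F$ has $\left|T_i\right|=t_i+1$, then $g$ is identically zero.} I would prove this by induction on $n$. The base case $n=1$ is simply the fact that a nonzero univariate polynomial of degree at most $t_1$ has at most $t_1<\left|T_1\right|$ roots. For the inductive step, write $g=\sum_{j=0}^{t_n}g_j(x_1,\dots,x_{n-1})x_n^{j}$; for each fixed point of $T_1\times\dots\times T_{n-1}$ the resulting univariate polynomial in $x_n$ has $t_n+1$ roots and degree at most $t_n$, hence vanishes identically, so every coefficient $g_j$ vanishes on $T_1\times\dots\times T_{n-1}$. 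Since $\deg_{x_i}g_j\le t_i$, the inductive hypothesis forces each $g_j\equiv 0$, hence $g\equiv 0$.

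To prove the theorem itself, I would argue by contradiction: suppose $f$ vanishes at every point of $S_1\times\dots\times S_n$. Because $\left|S_i\right|>t_i$, I may choose subsets of size exactly $t_i+1$ and assume $\left|S_i\right|=t_i+1$ (if $f$ vanishes on a product of sets, it vanishes on any subproduct). Set $g_i(x_i):=\prod_{s\in S_i}(x_i-s)=x_i^{t_i+1}-q_i(x_i)$ with $\deg q_i\le t_i$, so that on $S_i$ we have the identity $x_i^{t_i+1}=q_i(x_i)$. Using these relations, I would \emph{reduce} $f$: repeatedly replace any monomial containing a factor $x_i^{t_i+1}$ by substituting $q_i(x_i)$ for that factor. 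Each such substitution strictly lowers the $x_i$-degree of the monomial being rewritten and never increases its total degree, so the process terminates in a polynomial $\bar f=f-\sum_{i=1}^{n}h_ig_i$ with $\deg_{x_i}\bar f\le t_i$ for all $i$, $\deg\bar f\le\deg f$, and $\deg h_i\le\deg f-\deg g_i=\deg f-(t_i+1)$.

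Finally I would extract the contradiction from two observations. On one hand, each $g_i$ vanishes on $S_1\times\dots\times S_n$, so $\bar f$ agrees with $f$ on $S_1\times\dots\times S_n$ and therefore also vanishes there; since $\deg_{x_i}\bar f\le t_i$, the key lemma gives $\bar f\equiv 0$. On the other hand, I would track the coefficient of $x_1^{t_1}\cdots x_n^{t_n}$, a monomial of total degree $\sum_j t_j=\deg f$. In $h_ig_i$, any monomial of total degree $\deg f$ must arise as the product of the leading monomial $x_i^{t_i+1}$ of $g_i$ with a top-degree monomial of $h_i$ (using $\deg h_i\le\deg f-(t_i+1)$), so it has $x_i$-exponent at least $t_i+1$ and in particular is not $x_1^{t_1}\cdots x_n^{t_n}$. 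Hence $\sum_i h_ig_i$ contributes nothing to the coefficient of $x_1^{t_1}\cdots x_n^{t_n}$, which is therefore the same in $\bar f$ as in $f$, namely nonzero; so $\bar f\not\equiv 0$, a contradiction. The only point requiring care is to organise the reduction step so that the degree bounds $\deg\bar f\le\deg f$ and $\deg h_i\le\deg f-(t_i+1)$ provably hold (for instance by always rewriting a monomial of largest total degree, and inducting on a suitable complexity measure of the exponent vectors); this is routine bookkeeping, and it is the place where I would be most careful.
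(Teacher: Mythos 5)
Your proof is correct, but note that the paper does not prove this statement at all: it is quoted as a known black box, namely Alon's Combinatorial Nullstellensatz, with a pointer to \cite[Theorem~1.2]{Alo99}. What you have written is essentially Alon's original argument: the vanishing lemma for polynomials with $\deg_{x_i}g\le t_i$ on a grid $T_1\times\dots\times T_n$ with $|T_i|=t_i+1$ (proved by induction on $n$ exactly as you do), followed by reduction of $f$ modulo the polynomials $g_i(x_i)=\prod_{s\in S_i}(x_i-s)$ and inspection of the coefficient of $x_1^{t_1}\cdots x_n^{t_n}$. All the steps you give are sound. Regarding the one point you flag as delicate (the degree bookkeeping for $\bar f$ and the $h_i$): this can be short-circuited. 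Each substitution replaces a single monomial containing $x_i^{t_i+1}$ by a combination of monomials of strictly smaller total degree (since $\deg q_i\le t_i<t_i+1$), so the process terminates by well-founded descent on total degrees, and no substitution can ever create or destroy the monomial $x_1^{t_1}\cdots x_n^{t_n}$: it is never itself rewritten (its $x_i$-exponents are $\le t_i$), and any newly created monomial has total degree strictly below that of the monomial it came from, hence strictly below $\deg f=\sum_i t_i$. Thus the coefficient of $x_1^{t_1}\cdots x_n^{t_n}$ in $\bar f$ equals that in $f$ without any explicit bound on $\deg h_i$, and the contradiction with the vanishing lemma goes through as you state.
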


\begin{proof}[Proof of \cref{lem:unbiased-non-asymptotic}]
Suppose without loss of generality that the coefficient of $\xi_{1}\dots\xi_{d}$
is nonzero, and condition on any outcomes for $\xi_{d+1},\dots,\xi_{n}$.
Then, $f\left(\x\right)-\ell$ becomes a degree-$d$ polynomial of $\xi_{1},\dots,\xi_{d}$,
and the coefficient of $\xi_{1}\dots\xi_{d}$ is nonzero. By \cref{lem:combinatorial-nullstellensatz}
with $S_{i}=\left\{ 0,1\right\} $, at least one of the $2^{d}$ equally likely outcomes
of $\left(\xi_{1},\dots,\xi_{d}\right)$ gives $f\left(\x\right)-\ell \ne0$. We have proved that
$$\Pr\left(f(\x)=\ell\cond\xi_{d+1},\dots,\xi_{n}\right)\le 1-2^{-d},$$
and the desired result then follows from the law of total probability.
\end{proof}
Now we prove \cref{prop:weak-bound}.
\begin{proof}[Proof of \cref{prop:weak-bound}]
First note that we can assume $f$ is multilinear, because $\xi_i^2=\xi_i$ for each $i$. Consider any $x$ not equal to the constant coefficient of $f$. Let $\x'\in\Ber\left(2p\right)^n$ and $\g\in\Ber\left(1/2\right)^n$
be independent random vectors, so that $(\xi_{1}'\gamma_{1},\dots,\xi_{n}'\gamma_{n})$ has
the same distribution as $\x$. Note that if we condition on
any outcome of $\x'$ then $f(\xi_{1}'\gamma_{1},\dots,\xi_{n}'\gamma_{n})$ becomes a multilinear polynomial
of $\g$ whose constant coefficient is the same as the constant coefficient
of $f$. If this polynomial is constant then $\Pr\left(f\left(\x\right)=x\right)=0$,
and otherwise \cref{lem:unbiased-non-asymptotic} gives $\Pr\left(f\left(\x\right)=x\right)\le1-2^{-d}$.
\end{proof}
Next, we give a unified proof of \cref{thm:poisson-L-O,thm:poisson-nonnegative}. For $0<p<1$, define
\[
\tau\left(p\right):=\sup_{n\in\NN}\Pr\left(X_{n,p}=1\right)=\sup_{n\in\NN} np(1-p)^{n-1},
\]
where $X_{n,p}\in\Bin\left(n,p\right)$. It is a straightforward computation to determine the limiting behaviour of $\tau(p)$ as $p\to 0$, as follows.

\begin{lem}
We have $\tau\left(p\right)\le 1/e+o_{p\to0}(1)$.
\end{lem}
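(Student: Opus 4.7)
The plan is to upper-bound $\tau(p)$ by the supremum of the continuous function $g(x) := xp(1-p)^{x-1}$ over real $x>0$, and then evaluate this supremum explicitly via elementary calculus. Since every integer $n$ is a valid choice of $x$, we certainly have $\tau(p) \le \sup_{x>0} g(x)$, so a bound on the continuous supremum suffices.

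First I would differentiate: $g'(x) = p(1-p)^{x-1}\bigl(1 + x\ln(1-p)\bigr)$. Since $\ln(1-p)<0$, there is a unique critical point $x^* = -1/\ln(1-p)$, and the boundary behaviour $g(x)\to 0$ as $x\to 0^+$ or $x\to\infty$ confirms this is the global maximum. At the critical point, $(1-p)^{x^*-1} = e^{-1}(1-p)^{-1}$ and $x^* p = -p/\ln(1-p)$, giving
\[
g(x^*) = \frac{-p}{(1-p)\ln(1-p)} \cdot \frac{1}{e}.
\]

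Next I would expand as $p\to 0$. Using $-\ln(1-p) = p + p^2/2 + O(p^3)$, we get
\[
\frac{-p}{\ln(1-p)} = \frac{1}{1 + p/2 + O(p^2)} = 1 - \frac{p}{2} + O(p^2),
\]
and $(1-p)^{-1} = 1 + p + O(p^2)$. Multiplying these,
\[
g(x^*) = \frac{1}{e}\bigl(1 + p/2 + O(p^2)\bigr) = \frac{1}{e} + O(p),
\]
which is $1/e + o_{p\to 0}(1)$ as claimed.

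There is no real obstacle here: this is a one-variable optimisation followed by a Taylor expansion. The only point worth noting is that the supremum in the definition of $\tau(p)$ is over integer $n$, but since the continuous maximiser $x^*\approx 1/p$ lies in $(0,\infty)$, passing to real arguments only enlarges the supremum, and the resulting bound is all we need.
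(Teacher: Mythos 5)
Your proposal is correct and follows essentially the same route as the paper: relax $n$ to a real variable, locate the unique critical point $x^*=-1/\log(1-p)$, and evaluate $g(x^*)=\frac{-p}{e(1-p)\log(1-p)}$. The only (inessential) difference is that you finish with a Taylor expansion of $\log(1-p)$, whereas the paper takes the limit via L'H\^opital's rule.
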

\begin{proof}
For $0<p<1$, define $\eta_{p}:\left[0,\infty\right)\to\RR$ by $\eta_{p}\left(n\right):=np\left(1-p\right)^{n-1}$.
We compute
\[
\eta_{p}'\left(n\right)=p\left(1-p\right)^{n-1}\left(1+n\log\left(1-p\right)\right),
\]
so $\eta_{p}'\left(n\right)=0$ only when $n=-1/\log\left(1-p\right)$.
Since $\eta_{p}\left(0\right)=0$ and $\eta_{p}\left(n\right)\to0$
as $n\to\infty$, we have 
\[
\tau\left(p\right)\le\sup_{n\in\left[0,\infty\right)}\eta_{p}\left(n\right)=\eta_{p}\left(-1/\log\left(1-p\right)\right)=\frac{-p}{e\left(1-p\right)\log\left(1-p\right)}.
\]
This converges to $1/e$ as $p\to0$, by L'H\^opital's rule.
\end{proof}

The following lemma
then implies \cref{thm:poisson-L-O,thm:poisson-nonnegative}.
\begin{lem}
Let $f$ be an $n$-variable polynomial, with zero constant coefficient,
which is either of degree 1 or has all coefficients nonnegative. Consider
any $p\in\left(0,1\right)$ and $\ell\ne0$. Then
\[
\Pr\left(f\left(\x\right)=\ell\right)\le\tau\left(p\right),
\]
where $\x\in\Ber\left(p\right)^{n}$.
\end{lem}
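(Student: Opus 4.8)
The plan is to prove the statement by induction on $n$, treating the degree-$1$ case and the nonnegative-coefficient case in a unified way. The base case $n=1$ is trivial: $f(\xi_1)=a\xi_1$ with $a\ne 0$, so $\Pr(f(\x)=\ell)\le\Pr(\xi_1=1)=p\le\tau(p)$ (since $\tau(p)\ge 1\cdot p(1-p)^0=p$). For the inductive step, I would fix the variable $\xi_n$ and condition on its value. Writing $f(\x)=f(\xi_1,\dots,\xi_{n-1},0)+\xi_n\big(f(\xi_1,\dots,\xi_{n-1},1)-f(\xi_1,\dots,\xi_{n-1},0)\big)$, we can decompose $f=g+x_n h$ where $g=f(x_1,\dots,x_{n-1},0)$ is an $(n-1)$-variable polynomial with zero constant coefficient (it inherits the degree-$1$ or nonnegativity hypothesis), and $h=f(x_1,\dots,x_{n-1},1)-f(x_1,\dots,x_{n-1},0)$. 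Then
\[
\Pr\left(f(\x)=\ell\right)=(1-p)\Pr\left(g(\x')=\ell\right)+p\Pr\left(g(\x')+h(\x')=\ell\right),
\]
where $\x'\in\Ber(p)^{n-1}$.

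The first term is controlled directly by induction: if $g$ is not identically zero then $\Pr(g(\x')=\ell)\le\tau(p)$, and if $g\equiv 0$ then that probability is $0$. So the work is to bound the second term, $\Pr(g(\x')+h(\x')=\ell)$. Here the key observation should be a parity/monotonicity trick tailored to the two cases. In the nonnegative-coefficient case, one can hope to leverage that $h$ also has nonnegative coefficients and that $g+h=f(x_1,\dots,x_{n-1},1)$; if this polynomial has a positive constant term (namely $h$'s constant term, which is the coefficient of $x_n$ in $f$) one can sometimes shift and reduce. In the degree-$1$ case $h$ is a constant, so $g+h=\ell$ is equivalent to $g=\ell-h$, a single-value event for the $(n-1)$-variable linear $g$, again handled by induction unless $g\equiv 0$ — but if $g\equiv 0$ then $f=x_n h$ with $h$ a nonzero constant, so $\Pr(f(\x)=\ell)=\Pr(\xi_n h=\ell)\le p\le\tau(p)$. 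The cleanest route is probably to set up the induction so that the critical quantity is $\Pr(X_{n,p}=1)=np(1-p)^{n-1}$ appearing as an extremal case when $f=\xi_1+\dots+\xi_n$ and $\ell=1$: the supremum defining $\tau(p)$ is exactly what makes the induction self-sustaining across all $n$.

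The main obstacle I anticipate is the second term $\Pr\left(g(\x')+h(\x')=\ell\right)$ when $g$ and $h$ are genuinely correlated polynomials (the nonnegative case with $\deg f\ge 2$). One cannot simply condition away the dependence. I expect the resolution is to further condition: fix the values of whichever variables appear in $h$ but not in $g$, or more cleverly, pick a variable $\xi_j$ on which $f$ genuinely depends and iterate the same split, so that at each stage we either peel off a factor and gain a factor of $p$ (tracking toward $np(1-p)^{n-1}$) or reduce to a lower-dimensional instance of the same lemma. Getting the bookkeeping right so that the accumulated bound is exactly $\sup_m mp(1-p)^{m-1}=\tau(p)$ rather than something larger is the delicate part; the nonnegativity of coefficients is what should prevent cancellations that would otherwise spoil the count (and is exactly why the degree-$d$ analogue fails, as the $\sum\xi_i-\sum_{i<j}\xi_i\xi_j$ example in the introduction shows). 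Once the second term is bounded by $\tau(p)$ as well (possibly with the weight $p$ absorbed), combining with the $(1-p)$-weighted first term and using $\tau(p)\le 1/e+o_{p\to 0}(1)$ from the preceding lemma yields \cref{thm:poisson-L-O} and \cref{thm:poisson-nonnegative}.
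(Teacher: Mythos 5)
Your setup—induction on $n$, conditioning on $\xi_n$ to split into
\[
\Pr\left(f(\x)=\ell\right)=(1-p)\Pr\left(g(\x')=\ell\right)+p\Pr\left(g(\x')+h(\x')=\ell\right)
\]
—matches the paper exactly, and your treatment of the first term is correct. But there is a genuine gap in how you handle the second term, and you have misdiagnosed where the difficulty lies. You worry about ``correlation'' between $g$ and $h$ and propose further conditioning or an iterated peeling argument; that is a red herring. The polynomial $g+h$ is just an $(n-1)$-variable polynomial of $\x'\in\Ber(p)^{n-1}$ with nonnegative coefficients (or degree $1$), and there is no dependence problem. The real issue, which you brush past with ``one can sometimes shift and reduce,'' is that $g+h=f(x_1,\dots,x_{n-1},1)$ has constant coefficient $a_n$ (the linear coefficient of $x_n$ in $f$), so to apply the induction hypothesis you must rewrite the event as $\{(g+h-a_n)(\x')=\ell-a_n\}$ and you need $\ell-a_n\ne 0$. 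Your proposal never ensures this: in the degree-$1$ case you claim the reduction fails only ``unless $g\equiv 0$,'' but it also fails whenever $a_n=\ell$, which can happen with $g\not\equiv 0$.

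The missing idea is to choose \emph{which} variable to condition on so that its linear coefficient differs from $\ell$. If some $a_j\ne\ell$, relabel so $a_n\ne\ell$; then $\ell-a_n\ne 0$ and the induction closes immediately for both terms, giving $(1-p)\tau(p)+p\tau(p)=\tau(p)$. The only remaining case is $a_1=\dots=a_n=\ell$, and this must be handled separately (it is \emph{not} a base case artifact): there, $f(\x)=\ell$ forces $|\x|=1$ — if $|\x|=0$ then $f=0$, and if $|\x|\ge 2$ then in the nonnegative case $f\ge 2\ell>\ell$ while in the linear case $f=|\x|\,\ell\ne\ell$ — so $\Pr(f(\x)=\ell)=\Pr(X_{n,p}=1)\le\tau(p)$. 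You correctly sensed that $np(1-p)^{n-1}$ is the extremal quantity, but it enters as this exceptional case, not through bookkeeping of peeled-off factors of $p$. Without the device of selecting $x_n$ with $a_n\ne\ell$, your induction does not close.
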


\begin{proof}
We prove this by induction on $n$ (it is trivially true for $n=0$). So, consider some $n>0$, and assume that the statement is true for the case $n-1$. As in the proof of \cref{lem:unbiased-non-asymptotic} we can assume that $f$ is multilinear.

If $a_{1}=\dots=a_{n}=\ell$ then the only way we can have $f\left(\x\right)=\ell$ is if exactly one of the $\xi_{i}$ is equal to one. So, in this case $\Pr\left(f\left(\x\right)=\ell\right)=\Pr\left(X_{n,p}=1\right)\le\tau\left(p\right)$.

Otherwise, there must be some $a_{j}\ne\ell$. Suppose without
loss of generality that $a_{n}\ne\ell$, so $\ell-a_{n}\ne0$. Define
$\left(n-1\right)$-variable polynomials $g$ and $h$ by $f\left(x_{1},\dots,x_{n}\right)=g\left(x_{1},\dots,x_{n-1}\right)+a_{n}x_{n}+h\left(x_{1},\dots,x_{n-1}\right)x_{n}$, and observe that $g$ and $h$ both have zero constant coefficient. Let $\x'=(\xi_1,\dots,\xi_{n-1})$.
Applying the induction hypothesis to $g$ and $g+h$ gives 
\begin{align*}
\Pr\left(f\left(\x\right)=\ell\right) & =\Pr\left(\xi_{n}=0\right)\Pr\left(f\left(\x\right)=\ell\cond\xi_{n}=0\right)+\Pr\left(\xi_{n}=1\right)\Pr\left(f\left(\x\right)=\ell\cond\xi_{n}=1\right)\\
 & =\left(1-p\right)\Pr\left(g\left(\x'\right)=\ell\right)+p\Pr\left(g\left(\x'\right)+h\left(\x'\right)=\ell-a_{n}\right)\\
 & \le\left(1-p\right)\tau\left(p\right)+p\tau\left(p\right)\\
 & \le\tau\left(p\right).\tag*{\qedhere}
\end{align*}
\end{proof}

\section{Anti-concentration of the edge-statistic\label{sec:edge-ber}}

In this section we give the short proofs of \cref{prop:bernoulli-sqrt,prop:bernoulli-1-e}.
First, note that \cref{prop:bernoulli-1-e} is an immediate consequence
of \cref{thm:poisson-nonnegative}.
\begin{proof}[Proof of \cref{prop:bernoulli-1-e}]
Let $\x\in\Ber\left(k/n\right)^{n}$, and let $f\left(\x\right):=\sum_{e\in E(G)}\prod_{i\in e} \xi_i$. This polynomial has nonnegative coefficients. Then, $X_{G,k}^{\Ber}$
has the same distribution as $f\left(\x\right)$, and since we are
assuming $n/k\to\infty$ we can apply \cref{thm:poisson-nonnegative}.
\end{proof}
Next, it is almost as simple to deduce \cref{prop:bernoulli-sqrt}
from \cref{thm:rough-LO}. We need the following well-known lemma,
which may be proved by induction, iteratively deleting any vertices with degree less than $d/r$ (see for example \cite[Lemma~2.5]{FKSnew}).
\begin{lem}
\label{lem:average-min-degree}Let $G$ be an $r$-uniform hypergraph
with average degree $d$. Then $G$ has an induced subgraph with minimum
degree at least $d/r$.
\end{lem}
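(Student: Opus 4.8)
The plan is to prove \cref{lem:average-min-degree} by the standard ``peeling'' argument. I would argue by induction on the number of vertices $v(G)$. If every vertex of $G$ already has degree at least $d/r$, then we are done (taking $G$ itself as the induced subgraph). Otherwise, there is a vertex $w$ with $\deg_G(w)<d/r$; delete it to obtain $G':=G[V(G)\setminus\{w\}]$. The key computation is that $G'$ still has average degree at least $d$: an $r$-uniform hypergraph on $N$ vertices with average degree $d$ has $Nd/r$ edges (since each edge contributes to $r$ degrees), and deleting $w$ removes fewer than $d/r$ edges, so $G'$ has more than $Nd/r-d/r=(N-1)d/r$ edges and hence average degree strictly greater than $d$. (If $V(G')=\emptyset$ this needs a tiny bit of care, but that case only arises when $G$ has no edges, i.e. $d=0$, which is trivial.) By the induction hypothesis applied to $G'$, which has average degree at least $d$ and strictly fewer vertices, $G'$ contains an induced subgraph of minimum degree at least $d/r$; this is also an induced subgraph of $G$, completing the induction.

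There is essentially no obstacle here: the only thing to be slightly careful about is the bookkeeping with edge counts versus degree sums (the factor of $r$), and the degenerate case $d=0$, where the claim is vacuous since the empty subgraph has minimum degree ``at least $0$'' trivially, or one can simply note any single vertex works. Since the excerpt already flags this as ``well-known'' and refers to \cite[Lemma~2.5]{FKSnew} for details, I would keep the write-up to a few lines along exactly the lines above.
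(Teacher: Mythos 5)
Your proof is correct, and it is exactly the approach the paper has in mind: the paper states the lemma "may be proved by induction, iteratively deleting any vertices with degree less than $d/r$," which is precisely the peeling argument you carry out, including the key edge-counting step showing the average degree can only increase.
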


\begin{proof}[Proof of \cref{prop:bernoulli-sqrt}]
We can obtain our random subset $A^{\Ber}$ by first sampling a random
subset $A'$, where each element is present with probability $2k/n$,
then deleting each element from $A'$ with probability $1/2$. By
the Chernoff bound, with probability $1-e^{-\Omega\left(k\right)}$
we have $\left(3/2\right)k\le\left|A'\right|\le3k$; consider
such an outcome of $A'$, and let $G':=G\left[A'\right]$. Let $X$ be a random variable having the same distribution as $X$ conditioned on this outcome of $A'$. It now suffices
to show that $\Pr\left(\left|X-\ell\right|\le k^{r-1}\right)=O\left(1/\sqrt{k}\right)$.

Now, note that $\E X=e\left(G'\right)/4$, and observe that
deleting an element of $A^{\Ber}$ can change $X$ by at most
$\left|A'\right|^{r-1}$. If $e\left(G'\right)\le \ell$ then $|\ell-\E X|\ge 3\ell/4\ge \ell/2+k^{r-1}$ and therefore by
the Azuma--Hoeffding inequality we have
\[
\Pr\left(\left|X-\ell\right|\le k^{r-1}\right)\le \Pr\left(\left|X-\E X \right|\ge \ell/2\right)\le\exp\left(-\Omega\left(\frac{\left(\ell/2\right)^{2}}{\left|A'\right|^{2\left(r-1\right)}\cdot\left|A'\right|}\right)\right)=e^{-\Omega\left(k\right)}.
\]
So, we may assume that $e\left(G'\right)\ge\ell=\Omega\left(k^{r}\right)$.
By \cref{lem:average-min-degree}, this implies that $G'$ has an induced
subgraph $G'\left[B\right]$ with minimum degree $\Omega\left(k^{r-1}\right)$.
Condition on any outcome of $A^{\Ber}\setminus B$, and for $i\in B$
let $\xi_{i}:=\one_{i\in A^{\Ber}}$, so that $X_{G,k}$ can be viewed
as a function of $\left(\xi_{i}\right)_{i\in B}$. Recalling the definition of $\Delta_i$ from \cref{thm:rough-LO} we have $\Delta_i=\left|N_G(i)\cap A^{\Ber}\right|\ge \left|N_G(i)\cap A^{\Ber}\cap B\right|$. By our minimum degree assumption, $\left|N_G(i)\cap A^{\Ber}\cap B\right|$ has a binomial distribution with mean $\Omega(k^{r-1})$, so by the Chernoff bound, with probability $1-e^{-\Omega\left(k^{r-1}\right)}$ each
$\Delta_{i}=\Omega\left(k^{r-1}\right)$, which allows us to apply
\cref{thm:rough-LO}. (This gives us a bound for the probability that $X$ falls in an interval of length $\Omega(k^{r-1})$, which suffices because we can cover any interval of length $2k^{r-1}$ with $O(1)$ such intervals).
\end{proof}

\section{Anti-concentration for subgraph counts in random graphs\label{sec:subgraph-counts}}

First we give the simple deduction of \cref{thm:general-random-graphs}
from \cref{thm:rough-LO}.
\begin{proof}[Proof of \cref{thm:general-random-graphs}]
For this proof it is convenient to redefine $X_H$ to count \emph{labelled} copies of $H$ (this changes the anti-concentration behaviour by a constant factor depending on the number of automorphisms of $H$). For a pair of distinct vertices $x,y\in V(G)$, define $\Delta_{x,y}$ to
be the difference
\[
X_{H}\left(G+\left\{ x,y\right\} \right)-X_{H}\left(G-\left\{ x,y\right\} \right)
\]
(that is, the number of copies of $H$ that would be created or destroyed
by flipping the status of $\left\{ x,y\right\} $). Observe that 
\begin{equation}
\E\Delta_{x,y}=2e(H) p^{e(H)-1}n(n-1)\dots(n-h+1)=\Omega\left(n^{h-2}\right),\label{eq:ED}
\end{equation}
Also, observe that for any vertex other than $x$ or $y$, changing the set of edges adjacent to that vertex can affect $\Delta_{x,y}$ by at most $O(n^{h-3})$. So, by the Azuma--Hoeffding inequality (with the vertex exposure martingale) it follows that
\begin{equation}
\Pr(\Delta_{x,y}\le \E\Delta_{x,y}/2)=\exp\left(-\Omega\left(\frac{n^{2(h-2)}}{n\cdot n^{2(h-3)}}\right)\right)=n^{-\omega(1)}.\label{eq:delta-concentration}
\end{equation}
Let $D$ be the common value of the $\E\Delta_{x,y}$. We now apply
\cref{thm:rough-LO} with $\binom n 2=\Theta(n^2)$ variables and with $s=D/4=\Omega(n^{h-2})$, and observe that the interval $\{r\in \RR:|r-x|\le n^{h-2}\}$ (having length $O(n^{h-2})$) can be covered by $O(1)$ intervals of length $2s$.
\end{proof}

Next, we turn to \cref{thm:clique-counts}. First we illustrate the high-level strategy of the proof, which is
by induction on $h$. Recall that $X_{K_{h-1}}=X_{K_{h-1}}(G)$ is the number of copies of $K_{h}$ in $G\in \GG(n,p)$.
Now, let $X$ be the number of copies of $K_{h}$ in $G\in \GG(n,p)$ which contain some fixed vertex $v$ (this
is equal to the number of copies of $K_{h-1}$ in $G\left[N_{G}\left(v\right)\right]$).
Then, we have the decomposition $X_{K_{h}}=X_{K_{h}}\left(G-v\right)+X$,
where $X_{K_{h}}\left(G-v\right)$ is typically much larger than $X$.
One may then hope to establish anti-concentration of $X_{K_{h}}$ by
first showing that $X_{K_{h}}\left(G-v\right)$ is anti-concentrated
at a ``coarse'' scale as in \cref{thm:general-random-graphs}, then
establishing anti-concentration of $X$ on a finer scale. For this
second step, we may first observe that the approximate value of $X$
is primarily driven by $\left|N_{G}\left(v\right)\right|$ (which
has a binomial distribution and is thus easy to study), and if we
condition on $N_{G}\left(v\right)$ then $X$ is the number of copies
of $K_{h-1}$ in a fixed vertex subset of $G-v$, which we may study
with the induction hypothesis.

The main complication with this approach is that it does not suffice
to analyse $X_{K_{h}}\left(G-v\right)$ and $X$ separately, because
in principle they could correlate with each other in a way that increases
the concentration probabilities. So, we must analyse the concentration
behaviour of $X$ conditioned on an outcome of $G-v$. Our approach
is to show that $G-v$ is very likely to have certain properties that
ensure that, conditioned on this outcome of $G-v$, $X$ has approximately the concentration behaviour we would expect unconditionally. For this, we will need something
a bit stronger than \cref{thm:clique-counts} as our induction hypothesis,
as follows.
\begin{defn}For real numbers $c\in (0,1/2)$ and $q\in (0,1)$, we say
an $n$-vertex graph $G$ is \emph{$\left(c,q,h\right)$-dispersed}
if for all $cn\le k\le (1-c)n$ and all $\ell$, the number of induced subgraphs of $G$ with $k$ vertices
and exactly $\ell$ copies of $K_{h}$ is at most $\binom{n}{k}q$.
\end{defn}

\begin{thm}
\label{thm:dispersed}For any constants $c\in (0,1/2)$, $p\in\left(0,1\right)$ and
$h\in\NN$, there are functions $\alpha=\alpha_{h,p,c}$ and $\phi=\phi_{h,p,c}$, with $\lim_{n\to \infty}\alpha(n)= 0$ and $\lim_{n\to \infty}\phi(n)= \infty$, such that the random graph $G\in\GG\left(n,p\right)$ is $\left(c,n^{\alpha+1-h},h\right)$-dispersed
with probability at least $1-n^{-\phi}$.
\end{thm}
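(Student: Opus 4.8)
The plan is to induct on $h$, using the same decomposition as for \cref{thm:clique-counts}: fix a vertex $v$, write $X_{K_h}(G[U]) = X_{K_h}(G[U\setminus\{v\}]) + X_{K_{h-1}}(G[U\cap N_G(v)])$ for any vertex set $U$ containing $v$, and control the two pieces on different scales. For the base case $h=1$ (or $h=2$), $X_{K_h}(G[U])$ depends only on $|U|$ (resp.\ on $e(G[U])$), and dispersion follows from \cref{thm:general-random-graphs} applied to the induced subgraph together with the fact that the vertex-exposure martingale makes $e(G[U])$ concentrated; one translates the probabilistic anti-concentration bound into a counting bound on induced subgraphs by the standard identity that the number of $k$-subsets $U$ with a given property equals $\binom{n}{k}$ times the probability a uniformly random $k$-subset has that property. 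For the inductive step, reformulate ``$G$ is $(c,q,h)$-dispersed'' probabilistically: letting $A$ be a uniformly random $k$-set with $cn\le k\le(1-c)n$, we must show $\sup_\ell \Pr(X_{K_h}(G[A])=\ell) \le q = n^{\alpha+1-h}$ for every such $k$, with high probability over $G$.

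The key steps, in order. First, I would fix $k$ and a vertex $v$, and condition on $G-v$ and on which of the remaining $k-1$ slots of $A$ land inside versus outside $N_G(v)$ — more precisely, reveal $A\setminus\{v\}$ as a random $(k-1)$-subset of $V(G)\setminus\{v\}$, and reveal the edges at $v$. Conditioned on $G-v$ and on $N := |A\cap N_G(v)|$, the set $A\cap N_G(v)$ is a uniformly random $N$-subset of the (random, but $G-v$-measurable after revealing the edges at $v$) set $N_G(v)\cap(A\setminus\{v\})$... this needs care; cleaner is: condition on $G-v$, on the edge-set at $v$, and on $A\setminus\{v\}$, so that $X_{K_h}(G[A\setminus\{v\}])$ becomes a constant and $X_{K_h}(G[A]) = \text{const} + X_{K_{h-1}}((G-v)[A\cap N_G(v)])$. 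Then $A\cap N_G(v)$ is a deterministic set, so this conditioning kills all randomness — too much. Instead, the right move (as in the outline for \cref{thm:clique-counts}) is a \emph{two-round exposure}: condition on $G-v$ only; then $N_G(v)$ is a random set with each non-$v$ vertex included with probability $p$, and independently $A$ is a random $k$-set. Show that with probability $1-n^{-\omega(1)}$ over $G-v$, the graph $G-v$ is ``good'' in the sense that (a) it is $(c',n^{\alpha'+2-h},h-1)$-dispersed for a slightly smaller constant $c'$ (by the induction hypothesis) and also $(c',\cdot,h)$-dispersed, and (b) a concentration-of-$\Delta$ statement holds: for most pairs $\{x,y\}$, the number of $K_h$'s through $\{x,y\}$ inside a typical large induced subgraph is $\Omega(n^{h-2})$ with the appropriate tail bound, so that \cref{thm:general-random-graphs}-type coarse anti-concentration applies to $X_{K_h}(G[A\setminus\{v\}])$ uniformly over $A$.

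Second, with $G-v$ fixed and good, I would split on the value of $N=|N_G(v)\cap A|$: since $N$ is hypergeometric-like with mean $\Theta(n)$ and standard deviation $\Theta(\sqrt n)$, condition on $N$ in its typical range; for each such $N$, $A\cap N_G(v)$ is a uniform $N$-subset of a set of size $\Theta(n)$, so by the induction hypothesis (dispersion of $G-v$ at level $h-1$) we get $\sup_m \Pr(X_{K_{h-1}}((G-v)[A\cap N_G(v)])=m \mid G-v, N) \le n^{\alpha'+1-(h-1)} = n^{\alpha'+2-h}$. Third — and this is the crux — I would combine the coarse anti-concentration of $X_{K_h}(G[A\setminus\{v\}])$ on scale $n^{h-2}$ with the fine anti-concentration of $X_{K_{h-1}}(G[A\cap N_G(v)])$: since $X_{K_{h-1}}$ here is of order $n^{h-1}$ with point probabilities $\le n^{\alpha'+2-h}$ and fluctuations across the range of $N$ of order $n^{h-2}\sqrt n = n^{h-3/2}\gg n^{h-2}$, one shows that for the sum to hit a target $\ell$ within a window that $X_{K_h}(G-v,A)$ occupies with probability $O(n^{h-2}/\Var^{1/2})=O(1/n)$, and inside that window $X_{K_{h-1}}$ must hit a specific value with probability $\le n^{\alpha'+2-h}$; multiplying and summing over the $O(\sqrt n)$ relevant values of $N$ (using independence of $A\setminus\{v\}$-structure from $N$ after conditioning suitably) yields a bound of the shape $O(\sqrt n)\cdot O(1/n)\cdot n^{\alpha'+2-h} \cdot (\text{loss})$, which one arranges to be $\le n^{\alpha+1-h}$ by taking $\alpha$ to decay slightly slower than $\alpha'$ but still to $0$, and $\phi$ to grow slightly slower. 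The main obstacle is exactly this dependence issue flagged in the paper's outline: $X_{K_h}(G[A\setminus\{v\}])$ and $X_{K_{h-1}}(G[A\cap N_G(v)])$ are \emph{not} independent (they share the randomness of which vertices of $G-v$ lie in $A$), so one cannot simply convolve their distributions; the resolution is to condition on enough of the structure of $G-v$ and on the partition of $A$ induced by $N_G(v)$ that, \emph{conditionally}, the coarse-scale anti-concentration of the first term holds deterministically (from goodness of $G-v$) while the second term still carries fresh conditional randomness, and then apply a Fubini-type argument: first expose the randomness determining $X_{K_{h-1}}$'s value, then use coarse anti-concentration of the remaining randomness to land the sum. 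Quantitatively tracking the constants $c\to c'$, $\alpha\to\alpha'$, $\phi\to\phi'$ through the $h$-fold induction, and verifying the union bound over all $k\in[cn,(1-c)n]$ and over the exceptional pairs $\{x,y\}$ costs only polynomially-many $n^{-\omega(1)}$ terms, so the error probability stays $n^{-\phi}$ with $\phi\to\infty$.
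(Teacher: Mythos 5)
Your outline has the right skeleton — induction on $h$, the vertex-decomposition $X_{K_h}(G[U]) = X_{K_h}(G[U\setminus\{v\}]) + X_{K_{h-1}}(G[U\cap N_G(v)])$, and the coarse/fine combination of anti-concentration at scale $n^{h-2}$ with point-probability $n^{o(1)+2-h}$ — and you correctly flag the dependence issue as the crux. But there is a genuine conceptual gap.

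You reformulate the goal as: with probability $1-n^{-\phi}$ over $G$, we have $\sup_\ell \Pr_A(X_{K_h}(G[A])=\ell)\le q$ for all relevant $k$. Your argument then conditions on $G-v$ being ``good'' (and on the edges at $v$), and bounds $\Pr_{A}(X_{K_h}(G[A])=\ell)$ by combining the induction hypothesis with coarse anti-concentration. However, what such a computation actually bounds is the \emph{expectation} of $\Pr_A(X_{K_h}(G[A])=\ell)$ over the randomness of the edges at $v$ (equivalently, the marginal $\Pr_{G,A}(\cdot)$ given $G-v$ good). To conclude that the graph is dispersed, you need this quantity to be small for \emph{almost every} realization of $N_G(v)$, not merely on average. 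A first-moment (Markov) bound converts ``expectation $\le n^{\alpha'+2-h}$'' into a tail bound $\Pr(\Pr_A(\cdot)>n^{\alpha+1-h}) \le n^{1+\alpha'-\alpha}$, which is not even $o(1)$, let alone the $n^{-\omega(1)}$ needed to survive a union bound over the $\Theta(n^{h+1})$ pairs $(k,\ell)$. The paper closes exactly this gap by bounding the $t$-th moment $\E[Z_{r,\ell}^t]$, where $Z_{r,\ell}$ counts $r$-subsets inducing exactly $\ell$ cliques: one writes $Z_{r,\ell}^t$ as a sum over ordered $t$-tuples $(S_1,\dots,S_t)$, restricts to ``spreading'' tuples via Chernoff, and establishes the joint bound $\Pr(X_{K_h}(G[S_i])=\ell\ \forall i)\le n^{o(1)+(1-h)t}$ by exposing the graphs $G[S_i]$ one at a time, using the fresh vertex $v_i\in S_i\setminus(S_1\cup\dots\cup S_{i-1})$ and the good/good-inducing machinery you partially describe. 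Markov on $Z_{r,\ell}^t$ then gives failure probability $n^{o(1)-t\varepsilon}$, and letting $t\varepsilon\to\infty$ while $\varepsilon\to 0$ yields $\alpha,\phi$ as required. Your proposal has no analogue of this higher-moment step, and without it the argument does not produce a superpolynomially small failure probability.

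A secondary but real issue: your bookkeeping ``$O(\sqrt n)\cdot O(1/n)\cdot n^{\alpha'+2-h}$'' gives $n^{\alpha'+3/2-h}$, which is a factor $\sqrt n$ too large. The paper's accounting obtains two independent factors of $O(\log n/\sqrt n)$ — one from the coarse anti-concentration of $X_{K_h}(G_i-v_i)$ landing in a window of width $\Theta(n^{h-3/2}\log n)$ (via \cref{thm:rough-LO}), and one from the point probabilities of the binomial $|N_i|$ over the $O(\log n)$ values of $k$ for which $E_k$ is near the target — and multiplies both by the fine-scale $n^{o(1)+2-h}$ coming from the induction hypothesis. Realizing those two $1/\sqrt{n}$ savings separately, and showing why they are multiplicative after the conditioning on $G_i^\cap$, is precisely where the good/good-inducing decomposition earns its keep; a single $O(1/n)$ from one application of coarse anti-concentration does not come for free in your setup.
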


To see that \cref{thm:dispersed} implies \cref{thm:clique-counts},
observe that we can obtain a random graph $G\in\GG\left(n,p\right)$
by first taking a random graph $G'\in\GG\left(2n,p\right)$, and
then taking a random subset of $n$ vertices of $G'$. \cref{thm:dispersed} tells us that $G'$ is very likely to be $\left(1/3,n^{o\left(1\right)+1-h},h\right)$)-dispersed, and if it is, then the definition of being dispersed gives the required bound on the point probabilities of $X_{K_h}$.

\begin{proof}[Proof of \cref{thm:dispersed}]As in the proof of \cref{thm:general-random-graphs}, we count labelled cliques, which affects $X_H$ by a constant factor $h!$. The proof is by induction on $h$ (the case $h=1$ is trivial). Fix
$\varepsilon>0$ and $t\in\NN$. We will prove that with probability
at least $1-n^{o\left(1\right)+1+h-\varepsilon t}$, $G\in \GG(n,p)$ is $\left(c,n^{\varepsilon+1-h},h\right)$-dispersed
(asymptotics are allowed to depend on $t$ and $\varepsilon$, which
we view as fixed constants for most of the proof). After we have proved this, we can then let $\varepsilon\to0$
and $t\varepsilon\to\infty$.

For $cn\le r\le (1-c)n$ and $0\le \ell\le \binom{n}{h}$, let $Z_{r,\ell}$ be the number
of sets of $r$ vertices in $G$ that induce exactly $\ell$ copies of $K_{h}$. We need to show that with probability $1-n^{o(1)+1+h-\varepsilon t}$ we have $Z_{r,\ell}\le n^{\varepsilon+1-h}\binom{n}{r}$ for all $cn\le r\le (1-c)n$ and all $0\le \ell \le \binom{n}{h}$.
We upper-bound $\E Z_{r,\ell}^{t}$. Note that if we randomly choose
a sequence $S_{1},\dots,S_{t}$ of $r$-vertex sets (with replacement),
then with probability $1-n^{-\omega\left(1\right)}$ we have
\begin{align}
\left|S_{1}\cup\dots\cup S_{i-1}\right|&\le 1-(c/2)^{i-1}n,\notag{}\\
\left|S_{i}\setminus\left(S_{1}\cup\dots\cup S_{i-1}\right)\right|&\ge (c/2)^{i}n=\Omega(n)\label{eq:exposure}
\end{align}
for each $i\in \{1,\dots ,t\}$. This can be proved by repeatedly applying a Chernoff bound
for the hypergeometric distribution.

Let $\mathcal S$ be the collection of sequences $(S_{1},\dots,S_{t})$ which satisfy \cref{eq:exposure} for each $i\in \{1,\dots,t\}$. For any $(S_{1},\dots,S_{t})\in \mathcal S$, with $G_{i}:=G\left[S_{i}\right]$, we wish to prove that 
\begin{equation}
\Pr\left(X_{K_{h}}\left(G_{i}\right)=\ell\text{ for each }i\right)\le n^{o\left(1\right)+\left(1-h\right)t}.\label{eq:joint-anti-concentration}
\end{equation}
It will follow from \cref{eq:joint-anti-concentration} that
\begin{align*}
\E Z_{r,\ell}^{t}&=\sum_{(S_1,\dots,S_t)\in \mathcal S}\!\!\Pr\left(X_{K_{h}}\left(G[S_i]\right)=\ell\text{ for each }i\right)+\sum_{(S_1,\dots,S_t)\notin \mathcal S}\!\!\Pr\left(X_{K_{h}}\left(G[S_i]\right)=\ell\text{ for each }i\right)\\
 &\le \binom{n}{r}^{t}n^{o\left(1\right)+t\left(1-h\right)}+\binom{n}{r}^{t}n^{-\omega\left(1\right)}\\
&=n^{o\left(1\right)+t\left(1-h\right)}\binom{n}{r}^{t},
\end{align*}
so
\[
\Pr\left(Z_{r,\ell}\ge n^{\varepsilon+1-h}\binom{n}{r}\right)=\Pr\left(Z_{r,\ell}^{t}\ge n^{t\left(\varepsilon+1-h\right)}\binom{n}{r}^{t}\right)\le\frac{\E Z_{r,\ell}^{t}}{n^{t\left(\varepsilon+1-h\right)}\binom{n}{r}^{t}}=n^{o\left(1\right)-t\varepsilon}.
\]
We can then take a union bound over all the (at most
$n\binom{n}{h}\le n^{h+1}$) possibilities for $r,\ell$.

So, it suffices to prove \cref{eq:joint-anti-concentration}. For the rest of the proof we fix a sequence $(S_{1},\dots,S_{t})\in \mathcal S$. The $t$
events $X_{K_{h}}\left(G_{i}\right)=\ell$ are not independent, but
by the choice of $S_{1},\dots,S_{t}$, for each $i$ there is still
a lot of randomness in $G_{i}$ after exposing outcomes of $G_{1},\dots,G_{i-1}$.
The plan is to show that for each $i$, if we condition on an outcome
of $G_{i}^{\cap}:=G\left[S_{i}\cap\left(S_{1}\cup\dots\cup S_{i-1}\right)\right]$,
then unless $G_{i}^{\cap}$ has some atypical properties, there is
still enough randomness to guarantee $\Pr\left(X_{K_{h}}\left(G_{i}\right)=\ell\right)\le n^{o\left(1\right)+\left(1-h\right)}.$

For each $i$ fix some $v_{i}\in S_{i}\setminus\left(S_{1}\cup\dots\cup S_{i-1}\right)$ (which is possible by \cref{eq:exposure}),
let $N_{i}=N_{G}\left(v_{i}\right)\cap S_{i}$, and define
$$X_{i}=X_{K_{h}}\left(G_{i}\right)-X_{K_{h}}\left(G_{i}-v_{i}\right)=X_{K_{h-1}}(G[N_i])$$
to be the number of copies of $K_{h}$ in $G_i$ which
contain $v_{i}$.
Also, let $n'=r-1=\Omega(n)$ be the common size of the sets $S_{i}\setminus\{v_i\}$, let $c'=\min\{p,1-p\}/2$,
let $I=\left\{ k\in\NN:c'n'\le k\le (1-c')n'\right\}$
and let $E_{k}=\E\left[\tallerphantom X_{i}\cond\left|N_i\right|=k\right]=p^{\binom{h-1}{2}}\binom{k}{h-1}$. Let $\beta=\alpha_{h-1,p,c'}(n')=o(1)$ and $\psi=\left(\log n\right)^{1/2}=\omega(1)$, recalling the notation in the statement of \cref{thm:dispersed}.
We say that an outcome of $G_{i}-v_{i}$ is \emph{good} if
\begin{enumerate}
\item it is $\left(c',(n')^{\beta+2-h},h-1\right)$-dispersed;
\item for each $k\in I$, at most $\binom{n'}{k}n^{-\psi}$
size-$k$ subsets $S\subseteq S_{i}\setminus\left\{ v_{i}\right\} $
fail to satisfy
\[
\left|X_{K_{h-1}}\left(G_{i}\left[S\right]\right)-E_{k}\right|\le n^{h-2}\log n.
\]
\end{enumerate}
Then, for $\left\{ x,y\right\} \subseteq S_{i}\setminus\left\{ v_{i}\right\} $,
define $\Delta_{x,y}^{\left(i\right)}$ to be the difference
\[
X_{K_{h}}\left(\left(G_{i}-v_{i}\right)+\left\{ x,y\right\} \right)-X_{K_{h}}\left(\left(G_{i}-v_{i}\right)-\left\{ x,y\right\} \right)
\]
(that is, the number of copies of $K_{h}$ in $G_{i}-v_{i}$ that
would be created or destroyed by flipping the status of $\left\{ x,y\right\} $).
Note that each $\E\Delta_{x,y}^{\left(i\right)}$ is equal to some common value $D=\Theta((n')^{h-2})=\Theta(n^{h-2})$, with essentially the same calculation as in \cref{eq:ED}.
Fix some $\chi=\omega(1)$ that grows sufficiently slowly to satisfy certain inequalities we will encounter later in the proof. Say that an outcome $G^{*}$ of $G_{i}^{\cap}$ is \emph{good-inducing}
if
\[
\Pr\left(G_{i}-v_{i}\text{ is good},\;\Delta_{x,y}^{\left(i\right)}\ge D/2\text{ for all }x,y\in S_{i}\setminus\{v_i\}\cond G_{i}^{\cap}=G^{*}\right)\ge1-n^{-\chi}.
\]
We now break the remainder of the proof into a sequence of claims.
First, we need to show that it is very likely that each $G_{i}^{\cap}$
is good-inducing (here we specify $\chi$).
\begin{claim}
\label{claim:probably-good-inducing}There is $\chi=\omega(1)$ such that $G_{i}^{\cap}$ is good-inducing
for each $i\in \{1,\dots ,t\}$, with probability $1-n^{-\omega\left(1\right)}$.
\end{claim}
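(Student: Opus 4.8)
The plan is the following. Fix $i\in\{1,\dots,t\}$; throughout, the sequence $(S_1,\dots,S_t)\in\mathcal S$ is fixed, so the only randomness is that of $G\in\GG(n,p)$. Let $A_i$ be the event that $G_i-v_i$ is good and $\Delta_{x,y}^{(i)}\ge D/2$ for all $x,y\in S_i\setminus\{v_i\}$. The main step is to prove $\Pr(\overline{A_i})\le n^{-\omega(1)}$; the claim then follows by a short averaging argument together with a union bound over $i$. Indeed, by the tower property $\Pr(\overline{A_i})=\E\!\left[\Pr(\overline{A_i}\mid G_i^{\cap})\right]$, so Markov's inequality applied to the nonnegative random variable $\Pr(\overline{A_i}\mid G_i^{\cap})$ shows that $G_i^{\cap}$ fails to be good-inducing (i.e.\ $\Pr(\overline{A_i}\mid G_i^{\cap})>n^{-\chi}$) with probability at most $n^{\chi}\Pr(\overline{A_i})$. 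This is where we specify $\chi$: since there are only $t=O(1)$ values of $i$ and $\Pr(\overline{A_i})=n^{-\omega(1)}$ for each of them, we may take $\chi=\omega(1)$ growing slowly enough that $n^{\chi}\Pr(\overline{A_i})=n^{-\omega(1)}$ for all $i$ (any additional constraints on $\chi$ imposed later in the proof will be of the same type and only force $\chi$ to grow more slowly). A union bound over $i\in\{1,\dots,t\}$ then shows that all the $G_i^{\cap}$ are good-inducing with probability $1-n^{-\omega(1)}$.

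To prove $\Pr(\overline{A_i})\le n^{-\omega(1)}$ we handle the three conditions defining $A_i$ separately. For condition 1 of goodness, note that $G_i-v_i=G[S_i\setminus\{v_i\}]$ is distributed exactly as a random graph in $\GG(n',p)$ on the fixed vertex set $S_i\setminus\{v_i\}$; since $n'=\Omega(n)\to\infty$, the induction hypothesis (\cref{thm:dispersed} applied with $h-1$, $c'$ and $n'$ vertices) shows that $G_i-v_i$ is $(c',(n')^{\beta+2-h},h-1)$-dispersed with probability at least $1-(n')^{-\phi_{h-1,p,c'}(n')}=1-n^{-\omega(1)}$. For condition 2, fix $k\in I$ and a $k$-element set $S\subseteq S_i\setminus\{v_i\}$; then $X_{K_{h-1}}(G_i[S])$ has mean $E_k=\Theta(n^{h-1})$, depends on the $\binom{k}{2}$ edge-indicators inside $S$, and changes by at most $O(n^{h-3})$ if any one of them is changed, so the Azuma--Hoeffding inequality gives $\Pr\!\left(|X_{K_{h-1}}(G_i[S])-E_k|>n^{h-2}\log n\right)\le 2\exp(-\Omega(\log^2 n))$. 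Hence the expected number of $k$-element sets $S$ with $|X_{K_{h-1}}(G_i[S])-E_k|>n^{h-2}\log n$ is at most $\binom{n'}{k}\exp(-\Omega(\log^2 n))$, and since the per-set failure probability $\exp(-\Omega(\log^2 n))$ comfortably absorbs the factor $n^{\psi}=\exp((\log n)^{3/2})$, Markov's inequality shows that with probability $1-n^{-\omega(1)}$ there are at most $\binom{n'}{k}n^{-\psi}$ such sets; a union bound over the $O(n)$ choices of $k\in I$ then yields condition 2 with probability $1-n^{-\omega(1)}$. Finally, each $\Delta_{x,y}^{(i)}$ has mean $D=\Theta(n^{h-2})$, and the same Azuma--Hoeffding estimate used in the proof of \cref{thm:general-random-graphs} (via the vertex-exposure martingale on $G_i-v_i$) gives $\Pr(\Delta_{x,y}^{(i)}<D/2)=n^{-\omega(1)}$, so a union bound over the $O(n^2)$ pairs $x,y$ shows that all $\Delta_{x,y}^{(i)}\ge D/2$ with probability $1-n^{-\omega(1)}$. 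Combining the three estimates gives $\Pr(\overline{A_i})\le n^{-\omega(1)}$.

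I expect the condition-2 estimate to be the main obstacle: there we must simultaneously control, for \emph{every} $k\in I$, the \emph{number} of $k$-subsets $S$ on which the subgraph count $X_{K_{h-1}}(G_i[S])$ deviates from $E_k$, rather than merely the anti-concentration of a single count. This is why we need the concentration of each individual count to be strong enough (failure probability $\exp(-\Omega(\log^2 n))$, which is super-polynomially small) to survive both the loss of the factor $n^{\psi}$ in the first-moment/Markov step and the $O(n)$-fold union bound over $k$. By contrast, the other two estimates are routine: the first is a direct invocation of the induction hypothesis, and the third is essentially identical to an argument already carried out for \cref{thm:general-random-graphs}. The concluding averaging over outcomes of $G_i^{\cap}$ is a one-line application of Markov's inequality, and causes no difficulty.
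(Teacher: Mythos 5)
Your proposal is correct and matches the paper's argument essentially verbatim: the paper also first establishes the unconditional bound $\Pr(\mathcal{A}_i\cup\mathcal{B}_i\cup\mathcal{C}_i)=n^{-\omega(1)}$ (as a separate Claim, with the same three-part analysis using the induction hypothesis, Azuma--Hoeffding plus Markov for the dispersedness counts, and Azuma--Hoeffding for the $\Delta_{x,y}^{(i)}$), and then applies the law of total expectation and Markov's inequality to the conditional probability given $G_i^\cap$. The only cosmetic difference is the choice of $\chi$: the paper sets $\chi=-\log\sqrt{f}/\log n$ where $f$ bounds $\Pr(\overline{A_i})$, whereas you argue more loosely that $\chi$ can be taken $\omega(1)$ but slowly growing; both are fine.
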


We defer the proof of \cref{claim:probably-good-inducing} until later.
It will be a fairly straightforward consequence of the induction hypothesis
and a concentration inequality. Next, recalling \cref{eq:exposure}, note that after exposing $G_i^\cap$ there are still $\Omega(n^2)$ edges of $G_i$ left unexposed. So, we can apply \cref{thm:rough-LO} with $s=D/4$ (and $O(m)$ different values of $x$) to prove the following claim, establishing anti-concentration of $X_{K_{h}}$
at a ``coarse'' scale.
\begin{claim}
\label{claim:G'-loose}For any $x\in\RR$, any real $m\ge 1$ and any good-inducing outcome
$G^{*}$ of $G_{i}^{\cap}$, we have 
\[
\Pr\left(\left|X_{K_{h}}\left(G_{i}-v_{i}\right)-x\right|<m n^{h-2}\cond G_{i}^{\cap}=G^{*}\right)= O\left(\frac{m}{n}\right).
\]
\end{claim}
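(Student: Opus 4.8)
The plan is to condition on the good-inducing outcome $G^*$ of $G_i^\cap$ and then apply \cref{thm:rough-LO} to $X_{K_h}(G_i-v_i)$, regarded as a function of the edges of $G_i-v_i$ that are still random after this conditioning.

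First I would pin down the relevant Bernoulli variables. Conditioning on $G_i^\cap=G^*$ fixes exactly the edges of $G_i-v_i$ lying inside $S_i\cap(S_1\cup\dots\cup S_{i-1})$; every other pair $\{x,y\}\subseteq S_i\setminus\{v_i\}$ (those with an endpoint outside $S_1\cup\dots\cup S_{i-1}$) is still an independent $p$-Bernoulli variable. By \cref{eq:exposure} the set $S_i\setminus(S_1\cup\dots\cup S_{i-1})$ has $\Omega(n)$ elements, and $v_i$ is one of them, so the pairs among the remaining new vertices already give $\Omega(n^2)$ free pairs, while the total number of pairs in $S_i\setminus\{v_i\}$ is $O(n^2)$; hence the number $N$ of free pairs satisfies $N=\Theta(n^2)$. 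Thus $X_{K_h}(G_i-v_i)$ is a function of $N=\Theta(n^2)$ independent $p$-Bernoulli variables, and — this is the key point that ties the hypotheses of \cref{thm:rough-LO} to the notion of ``good-inducing'' — flipping the free pair $\{x,y\}$ changes this function by exactly $\Delta^{(i)}_{x,y}$, so the discrete derivatives in \cref{thm:rough-LO} are precisely the $\Delta^{(i)}_{x,y}$.

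Next I would verify the hypothesis of \cref{thm:rough-LO}. Since $G^*$ is good-inducing, with conditional probability at least $1-n^{-\chi}$ we have $\Delta^{(i)}_{x,y}\ge D/2$ for all $x,y\in S_i\setminus\{v_i\}$ simultaneously; taking complements, for each free pair $\{x,y\}$ we get $\Pr(\Delta^{(i)}_{x,y}<D/2\mid G_i^\cap=G^*)\le n^{-\chi}=n^{-\omega(1)}=N^{-\omega(1)}$, using $\chi=\omega(1)$ and $N=\Theta(n^2)$. Recalling $D=\Theta(n^{h-2})$, I would then apply \cref{thm:rough-LO} with these $N$ variables and with $s$ equal to a small constant multiple of $D$ (taking $s=D/4$, so $2s=D/2=\Theta(n^{h-2})$; the purely cosmetic integrality issue in comparing $\Delta^{(i)}_{x,y}\le 2s$ with $\Delta^{(i)}_{x,y}<D/2$ can be absorbed by shrinking $s$ slightly). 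Since here the Bernoulli parameter $p\in(0,1)$ is a fixed constant, the main term $\max_t\binom{N}{t}p^t(1-p)^{N-t}$ is $\Theta(1/\sqrt{N})$, so \cref{thm:rough-LO} gives, for any single $y\in\RR$, the bound $\Pr(|X_{K_h}(G_i-v_i)-y|<s\mid G_i^\cap=G^*)=O(1/\sqrt{N})=O(1/n)$.

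Finally I would pass from scale $s$ to scale $mn^{h-2}$: because $s=\Theta(n^{h-2})$ and $m\ge 1$, the interval $\{r\in\RR:|r-x|<mn^{h-2}\}$ can be covered by $O(m)$ intervals of length $2s$, and a union bound over their centres yields $\Pr(|X_{K_h}(G_i-v_i)-x|<mn^{h-2}\mid G_i^\cap=G^*)=O(m)\cdot O(1/n)=O(m/n)$, as claimed. I expect the only real obstacle here to be the bookkeeping in the first step — confirming that conditioning on $G_i^\cap=G^*$ genuinely leaves $\Theta(n^2)$ mutually independent edge-variables whose discrete derivatives coincide with the $\Delta^{(i)}_{x,y}$ — after which the statement is a direct invocation of \cref{thm:rough-LO}, \cref{eq:exposure}, and the definition of ``good-inducing''.
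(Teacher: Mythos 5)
Your proposal is correct and takes essentially the same route as the paper: after conditioning on $G_i^\cap = G^*$, the $\Theta(n^2)$ free edge-variables among $S_i\setminus\{v_i\}$ have discrete derivatives equal to the $\Delta^{(i)}_{x,y}$, the good-inducing definition supplies the $n^{-\omega(1)}$ tail bound needed for \cref{thm:rough-LO} with $s\approx D/4$, and a union bound over $O(m)$ translates of the length-$2s$ window gives $O(m/n)$. The paper's own justification is a one-sentence remark that compresses exactly this argument.
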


Then, note that if $G_{i}-v_{i}$ is good, typically
$X_{i}=X_{K_{h-1}}(G[N_i])$ is approximately equal to $E_{\left|N_{i}\right|}$ (specifically, this follows from the second property of being good). So, the following
claim establishes anti-concentration of $X_{i}$.
\begin{claim}
\label{claim:E-loose}For any $x\in\RR$, we have 
$$
\Pr\left(\left|E_{\left|N_{i}\right|}-x\right|\le n^{h-2}\log n\right) = O\left(\frac{\log n}{\sqrt{n}}\right).$$
Further, we have
$$
\Pr\left(\left|E_{\left|N_{i}\right|}-\E X_{i}\right|>n^{h-3/2}\log n\right) =n^{-\omega\left(1\right)}.
$$
\end{claim}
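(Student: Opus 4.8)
Both statements concern only the distribution of $|N_i|=|N_G(v_i)\cap S_i|$, which has the binomial distribution $\Bin(n',p)$ with $n'=\Theta(n)$, since each of the $n'$ vertices of $S_i\setminus\{v_i\}$ lies in $N_G(v_i)$ independently with probability $p$. The key observation is that $k\mapsto E_k=p^{\binom{h-1}{2}}\binom{k}{h-1}$ is strictly increasing with consecutive gaps $E_{k+1}-E_k=p^{\binom{h-1}{2}}\binom{k}{h-2}$, which is $\Theta(n^{h-2})$ whenever $k=\Theta(n)$. Throughout I would use that $\Pr(|N_i|=k)=O(1/\sqrt n)$ for every $k$ (the standard modal estimate for a binomial distribution, of the kind used in the proofs of \cref{thm:positive-polynomial-LO,thm:rough-LO}), and that $|N_i|$ lies in any window of width $C\sqrt{n\log n}$ about $pn'$ with probability $1-n^{-\omega(1)}$ by the Chernoff bound.

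For the first part, I would fix a window $J=\{k\in\NN:|k-pn'|\le C\sqrt{n\log n}\}$ with $C$ large enough that $\Pr(|N_i|\notin J)=n^{-\omega(1)}$. Every $k\in J$ satisfies $k=\Theta(n)$, so the consecutive differences $E_{k+1}-E_k$ for $k\in J$ are all at least $c'n^{h-2}$ for some constant $c'>0$; hence any interval of length $2n^{h-2}\log n$ contains $E_k$ for at most $1+2(\log n)/c'=O(\log n)$ indices $k\in J$. Summing $\Pr(|N_i|=k)=O(1/\sqrt n)$ over these $O(\log n)$ indices and adding the $n^{-\omega(1)}$ contribution from $k\notin J$ gives $\Pr(|E_{|N_i|}-x|\le n^{h-2}\log n)=O((\log n)/\sqrt n)$.

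For the second part, note first that $\E X_i=\E[E_{|N_i|}]$ by the tower property, since $\E[X_i\mid|N_i|=k]=E_k$. Let $\mathcal G$ be the event $\{|N_i-pn'|\le C\sqrt{n\log n}\}$, so $\Pr(\overline{\mathcal G})=n^{-\omega(1)}$. On $\mathcal G$, Taylor-expanding the degree-$(h-1)$ polynomial $E_k$ about $k=pn'$ gives $|E_{|N_i|}-E_{pn'}|=O(|N_i-pn'|\cdot n^{h-2})+O(|N_i-pn'|^2\cdot n^{h-3})=O(n^{h-3/2}\sqrt{\log n})$, the second term being of lower order. Taking expectations — and using the crude bound $E_{|N_i|}\le E_{n'}=O(n^{h-1})$ off $\mathcal G$, whose contribution is thus $n^{-\omega(1)}$ — yields $|\E X_i-E_{pn'}|=O(n^{h-3/2}\sqrt{\log n})$ as well, so on $\mathcal G$ we have $|E_{|N_i|}-\E X_i|=O(n^{h-3/2}\sqrt{\log n})<n^{h-3/2}\log n$ for $n$ large. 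Combining with $\Pr(\overline{\mathcal G})=n^{-\omega(1)}$ gives the second bound. The only point needing a little care — hardly an obstacle — is that the gap estimate and the Taylor bound are uniform over $J$ because every index there is $\Theta(n)$, and that $n^{h-3/2}\sqrt{\log n}=o(n^{h-3/2}\log n)$, so the slack in the claimed exponents is real.
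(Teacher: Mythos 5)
Your proof is correct, and it arrives at the same two bounds via slightly different bookkeeping; the core observation in both cases is that $E_k=p^{\binom{h-1}{2}}\binom{k}{h-1}$ has consecutive gaps of order $\Theta(n^{h-2})$ when $k=\Theta(n)$, while $|N_i|\sim\Bin(n',p)$ is tightly concentrated. The differences are worth noting: the paper proves the second inequality first, by applying the Azuma--Hoeffding inequality directly to the Lipschitz function $E_{|N_i|}$ (with Lipschitz constant $O(n^{h-2})$ in the $n'$ edge-indicators at $v_i$), and then uses that result to reduce the first inequality to the range $x=\Omega(n^{h-1})$, after which only $O(\log n)$ modal values of $|N_i|$ can hit the target window. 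You instead make the two parts independent by restricting to a Chernoff window $J$ around $pn'$ up front, which forces $k=\Theta(n)$ directly; and for the second part you concentrate $|N_i|$ by Chernoff and then propagate this through a Taylor expansion of the degree-$(h-1)$ polynomial $E_k$ about $pn'$, using the tower-property identity $\E X_i=\E[E_{|N_i|}]$ (which, to be fair, the paper's Azuma argument also implicitly relies on to identify the centering constant). Your decoupled structure is arguably a little cleaner, while the paper's Azuma route is a little shorter for part two; both are fully valid and give the stated $n^{h-3/2}\log n$ threshold with the required $n^{-\omega(1)}$ failure probability.
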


We defer the proof of \cref{claim:E-loose} until later. The proof
is fairly simple, since $\left|N_{i}\right|$ is binomially distributed, and we have an explicit formula for $E_k$. Next, recalling that $X_{i}$ is the number of copies
of $K_{h-1}$ in $G[N_{i}]$, the following claim is a direct consequence
of the first property of being good (that $G_i-v_i$ is $\left(c',(n')^{\beta+2-h},h-1\right)$-dispersed). Indeed, after conditioning on the event that $\left|N_{i}\right|=k$, note that $N_i$ is a uniformly random $k$-vertex subset of $G_i-v_i$.
\begin{claim}
\label{claim:dispersedness-application}For any $x\in\RR$, any $k\in I$,
and any good outcome $G'$ of $G_{i}-v_{i}$, we have 
\[
\Pr\left(X_{i}=x\cond G_{i}-v_{i}=G',\;\left|N_{i}\right|=k\right)\le n^{\beta+2-h}= n^{o\left(1\right)+2-h}.
\]
\end{claim}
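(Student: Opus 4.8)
The plan is to deduce the claim directly from the first defining property of a good outcome (that $G_{i}-v_{i}$ is $(c',(n')^{\beta+2-h},h-1)$-dispersed), after two elementary reductions. First I would pin down the conditional distribution of $N_{i}$. Since $v_{i}\in S_{i}\setminus(S_{1}\cup\dots\cup S_{i-1})$, it is distinct from every vertex of $S_{i}\setminus\{v_{i}\}$, and the $n'=r-1$ potential edges of $G$ joining $v_{i}$ to $S_{i}\setminus\{v_{i}\}$ are not part of $G_{i}-v_{i}=G[S_{i}\setminus\{v_{i}\}]$. Hence these edges are still present independently with probability $p$ after conditioning on $G_{i}-v_{i}=G'$, so $N_{i}=N_{G}(v_{i})\cap S_{i}$ is a $p$-random subset of $S_{i}\setminus\{v_{i}\}$; conditioning further on $|N_{i}|=k$ makes $N_{i}$ a uniformly random $k$-element subset of the vertex set of $G'$ (which has $n'$ elements). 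Second, I would record the identity $X_{i}=X_{K_{h-1}}(G'[N_{i}])$: a copy of $K_{h}$ in $G_{i}$ through $v_{i}$ consists of $v_{i}$ together with a copy of $K_{h-1}$ on $h-1$ vertices of $N_{i}$, and $G[N_{i}]=G'[N_{i}]$ because $N_{i}\subseteq S_{i}\setminus\{v_{i}\}$.

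Combining the two reductions,
\[
\Pr\left(X_{i}=x\cond G_{i}-v_{i}=G',\ |N_{i}|=k\right)=\frac{\bigl|\{S\subseteq V(G'):|S|=k,\ X_{K_{h-1}}(G'[S])=x\}\bigr|}{\binom{n'}{k}}.
\]
If $x$ is not a nonnegative integer this is zero, so assume $x=\ell\in\NN$. Since $k\in I$ we have $c'n'\le k\le(1-c')n'$, and since $G'$ is good it is $(c',(n')^{\beta+2-h},h-1)$-dispersed, so the numerator is at most $\binom{n'}{k}(n')^{\beta+2-h}$ and the conditional probability is at most $(n')^{\beta+2-h}$. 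As $n'=r-1=\Theta(n)$ and $\beta=\alpha_{h-1,p,c'}(n')=o(1)$, this equals $n^{o(1)+2-h}$, which is the bound asserted in the claim.

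I do not expect any genuine obstacle here: as the surrounding discussion indicates, the claim is essentially a restatement of the dispersedness definition at level $h-1$. The only points needing a little care are the two preliminary reductions---verifying that conditioning on the entire graph $G_{i}-v_{i}$ does not disturb the uniformity of $N_{i}$ among $k$-subsets (which is exactly where the choice $v_{i}\in S_{i}\setminus(S_{1}\cup\dots\cup S_{i-1})$ enters), and matching $X_{i}$ with a clique count inside the \emph{fixed} graph $G'$ so that the dispersedness hypothesis applies verbatim.
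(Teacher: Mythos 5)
Your argument is correct and is essentially the same as the paper's (the paper merely notes that, conditionally on $|N_i|=k$, the set $N_i$ is a uniformly random $k$-vertex subset of $G_i-v_i$, and then invokes the dispersedness property). Your write-up just spells out the two reductions (independence of the edges at $v_i$ from $G_i-v_i$, and the identity $X_i = X_{K_{h-1}}(G'[N_i])$) that the paper leaves implicit.
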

Finally, the following claim follows directly from the Chernoff bound, since $|N_i|$ has a binomial distribution with parameters $n'=\Omega(n)$ and $p$.
\begin{claim}\label{claim:chernoff}
For each $i$,
\[
\Pr\left(\left|N_{i}\right|\notin I\right)=n^{-\omega\left(1\right)}
\]
\end{claim}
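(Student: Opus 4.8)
The plan is to observe that $|N_i|$ is an honest binomial random variable whose mean lies well inside the interval $I$, so that the claim follows immediately from the Chernoff bound recorded in the preliminaries; there is essentially no difficulty here beyond a one-line computation checking that $p$ is bounded away from the endpoints.

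First I would pin down the distribution of $|N_i|$. We have fixed a sequence $(S_1,\dots,S_t)\in\mathcal S$ and a vertex $v_i\in S_i\setminus(S_1\cup\dots\cup S_{i-1})$, and $N_i=N_G(v_i)\cap S_i$. Thus $|N_i|$ is exactly the number of $u\in S_i\setminus\{v_i\}$ with $\{v_i,u\}\in E(G)$; since in $G\in\GG(n,p)$ these $n'=|S_i\setminus\{v_i\}|=r-1$ potential edges are present independently with probability $p$, we have $|N_i|\sim\operatorname{Bin}(n',p)$. Moreover $n'=r-1\ge cn-1=\Omega(n)$ because $cn\le r\le(1-c)n$, so $\E|N_i|=pn'=\Omega(n)$.

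Second I would check that the mean $pn'$ sits at distance $\Omega(n)$ from both endpoints of $I=\{k\in\NN:c'n'\le k\le(1-c')n'\}$. Since $c'=\min\{p,1-p\}/2$, we have $c'\le p/2$, whence $p-c'\ge p/2\ge c'$; and we also have $c'\le(1-p)/2$, hence $(1-c')-p=(1-p)-c'\ge(1-p)/2\ge c'$. Consequently every integer $k$ with $|k-pn'|<c'n'$ lies in $I$. Setting $\varepsilon:=c'/p$, so that $0<\varepsilon\le 1/2$ and $\varepsilon\,\E|N_i|=c'n'$, it follows that the event $|N_i|\notin I$ forces $|N_i|$ to deviate from its mean by at least $\varepsilon\,\E|N_i|$.

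Finally, the Chernoff bound for the binomial distribution gives
\[
\Pr\bigl(|N_i|\notin I\bigr)\le 2\exp\Bigl(-\frac{\varepsilon^{2}}{3}\,pn'\Bigr)=e^{-\Omega(n)}=n^{-\omega(1)},
\]
which is the asserted bound. The only point requiring any care at all is the elementary fact that the definition of $c'$ keeps $p$ a constant distance from $c'$ and $1-c'$; this is what makes the relevant deviation of order $n$, and hence the Chernoff estimate superpolynomially small in $n$. I would therefore expect this claim to occupy only a couple of lines in the write-up, with no genuine obstacle to overcome.
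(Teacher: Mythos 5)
Your proof is correct and takes exactly the route the paper intends: the paper dispenses with this claim in one sentence, noting that $|N_i|\sim\Bin(n',p)$ with $n'=\Omega(n)$ and invoking the Chernoff bound, and your write-up simply fills in the small verification that $pn'$ lies $\Omega(n)$ inside $I$ so that the required deviation is of order $n$.
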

Before proving \cref{claim:probably-good-inducing,claim:E-loose},
we show how the above claims can be used to deduce \cref{eq:joint-anti-concentration}.
Let $T_{x}$ be the set of all $k\in \NN$ such that $\left|E_{k}-x\right|\le n^{h-2}\log n$. Then, $\Pr\left(\left|N_{i}\right|\in T_{x}\right)=O(\log n/\sqrt n)$ by the first part of \cref{claim:E-loose}. Next, consider any good outcome $G'$ of $G_{i}-v_{i}$. If $|N_i|\notin T_{x}$ then in order to have $X_i=x$ we must have $|X_i-E_{|N_i|}|>n^{h-2}\log n$. So, by the second property of being good, we have 
\[
\Pr\left(X_{i}=x\cond G_{i}-v_{i}=G',\;|N_i|\in I\setminus T_{x}\right)\le n^{-\psi}=n^{-\omega\left(1\right)}.
\]
By \cref{claim:dispersedness-application,claim:chernoff}, it follows that
for any $x\in\RR$ we have
\begin{align}
 & \Pr\left(X_{i}=x\cond G_{i}-v_{i}=G'\right)\nonumber \\
 & \qquad\le\sum_{k\in I\cap T_{x}}\Pr\left(X_{i}=x\cond G_{i}-v_{i}=G',\;|N_i|=k\right)\cdot \Pr(|N_i|=k)\nonumber \\
 & \qquad\qquad+\Pr\left(X_{i}=x\cond G_{i}-v_{i}=G',\;|N_i|\in I\setminus T_{x}\right)+\Pr\left(\left|N_{i}\right|\notin I\right)\nonumber \\
 & \qquad\le \Pr\left(\left|N_{i}\right|\in T_{x}\right)n^{o\left(1\right)+2-h}+n^{-\omega\left(1\right)}+n^{-\omega\left(1\right)}=n^{o\left(1\right)+3/2-h}.\label{eq:fine}
\end{align}
For $i\in \{1,\dots,t\}$, let $\mathcal{F}_{i}$ be the event
that $\left|X_{K_{h}}\left(G_{i}-v_{i}\right)+\E X_{i}-\ell\right|\le 2 n^{h-3/2}\log n$ (note that $\E X_i$ is an unconditional expectation and $\mathcal{F}_i$ only depends on $G_i-v_i$). By \cref{claim:G'-loose} with $x=\ell-\E X_i$ and $m=2\sqrt n \log n$, for any good-inducing outcome $G^{*}$ of $G_{i}^{\cap}$ we have $\Pr\left(\mathcal{F}_{i}\cond G_{i}^{\cap}=G^{*}\right)=O(\log n/\sqrt n)$. Also, by the second part of \cref{claim:E-loose} and the second property of being good, for any good outcome $G'$ of $G_{i}-v_{i}$ (not satisfying $\mathcal F_i$)
we have
\[
\Pr\left(X_{i}=\ell-X_{K_h}\left(G_{i}-v_{i}\right)\cond\overline{\mathcal{F}_{i}},\;G_{i}-v_{i}=G'\right)=n^{-\omega\left(1\right)}.
\]
Using \cref{eq:fine}, for any good-inducing
outcome $G^{*}$ of $G_{i}^{\cap}$ we then have
\begin{align}
 & \Pr\left(X_{K_{h}}\left(G_{i}\right)=\ell\cond G_{i}^{\cap}=G^{*}\right)\notag{}\\
 & \qquad=\Pr\left(X_{K_{h}}(G_i-v_i)+X_i=\ell\cond G_{i}^{\cap}=G^{*}\right)\notag{}\\
 & \qquad\le \Pr\left(\mathcal{F}_{i}\cond G_{i}^{\cap}=G^{*}\right)\Pr\left(X_{i}=\ell-X_{K_h}\left(G_{i}-v_{i}\right)\cond\mathcal{F}_{i},\;G_{i}-v_{i}\text{ is good},\;G_{i}^{\cap}=G^{*}\right)\notag{}\\
 & \qquad\qquad+\Pr\left(X_{i}=\ell-X_{K_h}\left(G_{i}-v_{i}\right)\cond\overline{\mathcal{F}_{i}},\;G_{i}-v_{i}\text{ is good},\;G_{i}^{\cap}=G^{*}\right)\notag{}\\
 &\qquad\qquad+\Pr\left(G_{i}-v_{i}\text{ is not good}\cond G_{i}^{\cap}=G^{*}\right)\notag{}\\
 & \qquad\le O\left(\frac{\log n}{\sqrt{n}}\right) n^{o\left(1\right)+3/2-h}+n^{-\omega\left(1\right)}+n^{-\omega\left(1\right)}=n^{o\left(1\right)+1-h}\label{eq:anticoncentration-conclusion}.
\end{align}
Now, let $\mathcal{H}_{i}$ be the event that $X_{K_{h}}\left(G_{i}\right)=\ell$
and that $G_{i+1}^{\cap}$ is good-inducing (if $i=t$ this is just
the event that $X_{K_{h}}\left(G_{t}\right)=\ell$). Observe that $G_1^\cap=\emptyset$ is not actually random, so \cref{claim:probably-good-inducing} implies that it is always good-inducing. Applying \cref{eq:anticoncentration-conclusion} we have
\begin{align*}
\Pr\left(\mathcal{H}_{i}\cond\mathcal{H}_{1},\dots,\mathcal{H}_{i-1}\right)\le\Pr\left(X_{K_{h}}\left(G_{i}\right)=\ell\cond\mathcal{H}_{1},\dots,\mathcal{H}_{i-1}\right)
\le n^{o\left(1\right)+1-h},
\end{align*}
so
\[
\Pr\left(\mathcal{H}_{1}\cap\dots\cap\mathcal{H}_{t}\right)=\prod_{i=1}^{t}\Pr\left(\mathcal{H}_{i}\cond\mathcal{H}_{1},\dots,\mathcal{H}_{i-1}\right)\le n^{o\left(1\right)+\left(1-h\right)t}.
\]
Finally, by \cref{claim:probably-good-inducing}, we have
\begin{align*}
\Pr\left(X_{K_{h}}\left(G_{i}\right)=\ell\text{ for each }i\right) & \le\Pr\left(\mathcal{H}_{1}\cap\dots\cap\mathcal{H}_{t}\right)+\Pr\left(\text{some }G_{i}^{\cap}\text{ is not good-inducing}\right)\\
 & \le n^{o\left(1\right)+\left(1-h\right)t}+n^{-\omega\left(1\right)}=n^{o\left(1\right)+\left(1-h\right)t},
\end{align*}
concluding the proof of \cref{eq:joint-anti-concentration}. 
Pending the proofs of \cref{claim:probably-good-inducing,claim:E-loose}, which are given below, this completes the proof of \cref{thm:dispersed}. 
\end{proof}

\cref{claim:probably-good-inducing} will be a consequence of the law of total expectation and the following claim.
\begin{claim}\label{claim:probably-good}
Let $\mathcal{A}_{i}$ be the event that $G_{i}-v_{i}$ fails to
satisfy the first property of being good, let $\mathcal{B}_{i}$ be the event that it fails to satisfy the second property of being good, and let $\mathcal{C}_{i}$ be the event that $\Delta_{x,y}^{\left(i\right)}<D/2=\E\Delta_{x,y}^{\left(i\right)}/2$ for some $x,y\in S_{i}\setminus\{v_i\}$. Then, for each $i\in \{1,\dots,t\}$, we have $\Pr\left(\mathcal{A}_{i}\cup\mathcal{B}_{i}\cup\mathcal{C}_{i}\right)= n^{-\omega(1)}$.
\end{claim}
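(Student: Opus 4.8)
All three events $\mathcal A_i,\mathcal B_i,\mathcal C_i$ are determined by the induced subgraph $G_i-v_i=G[S_i\setminus\{v_i\}]$, which (as an induced subgraph of $G\in\GG(n,p)$) has the distribution $\GG(n',p)$, with $n'=r-1=\Omega(n)$ since $r\ge cn$. The plan is to bound $\Pr(\mathcal A_i)$, $\Pr(\mathcal C_i)$ and $\Pr(\mathcal B_i)$ separately, each by $n^{-\omega(1)}$, and then take a union bound.

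The event $\mathcal A_i$ is handled by the induction hypothesis: \cref{thm:dispersed} applied with $h-1$ in place of $h$ and with the constant $c'\in(0,1/2)$ tells us that $G_i-v_i$ is $(c',(n')^{\beta+2-h},h-1)$-dispersed with probability at least $1-(n')^{-\phi_{h-1,p,c'}(n')}=1-n^{-\omega(1)}$, which is precisely the first property of being good (recall $\beta=\alpha_{h-1,p,c'}(n')$). The event $\mathcal C_i$ is handled just as $\Delta_{x,y}$ was in \cref{eq:delta-concentration}: each $\Delta^{(i)}_{x,y}$ has mean $D=\Theta(n^{h-2})$, and a lower-tail estimate for $\Delta^{(i)}_{x,y}$ (condition on the edges of $G_i-v_i$ incident to $x$ or $y$, which with probability $1-n^{-\omega(1)}$ leave the conditional mean $\Omega(n^{h-2})$; then apply Azuma--Hoeffding to the remaining $\Omega(n)$ vertices, each contributing a bounded difference $O(n^{h-3})$) gives $\Pr(\Delta^{(i)}_{x,y}<D/2)=e^{-\Omega(n)}$; a union bound over the $O(n^2)$ pairs $\{x,y\}\subseteq S_i\setminus\{v_i\}$ finishes this case. (When $h=2$ we have $K_{h-1}=K_1$, in which case $\mathcal A_i$ and $\mathcal B_i$ are vacuous, so we may assume $h\ge3$ below.)

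The bulk of the work is the bound on $\Pr(\mathcal B_i)$. Fix $k\in I$ and call a $k$-set $S\subseteq S_i\setminus\{v_i\}$ \emph{bad} if $|X_{K_{h-1}}(G_i[S])-E_k|>n^{h-2}\log n$. Since $G_i[S]$ has the distribution $\GG(k,p)$ for each fixed $S$ and $E_k$ is, by its definition as $\E[X_i\mid|N_i|=k]$, equal to the mean $\E[X_{K_{h-1}}(\GG(k,p))]$, linearity of expectation shows that the expected number of bad $k$-sets, over the randomness of $G_i-v_i$, equals $\binom{n'}{k}\Pr(|X_{K_{h-1}}(\GG(k,p))-E_k|>n^{h-2}\log n)$. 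The crucial estimate is that this tail probability is at most $\exp(-\Omega((\log n)^2))$: here $X_{K_{h-1}}$ in $\GG(k,p)$ (with $k=\Theta(n)$) is a function of the $\binom k2=O(n^2)$ edge-indicators, flipping one edge changes it by at most $O(k^{h-3})=O(n^{h-3})$ (the number of copies of $K_{h-1}$ through a fixed edge), and $n^{h-2}\log n$ is of order $\log n$ times the standard deviation $\Theta(n^{h-2})$ of $X_{K_{h-1}}(\GG(k,p))$, so the Azuma--Hoeffding inequality applied with the \emph{edge}-exposure filtration gives $\Pr(|X_{K_{h-1}}(\GG(k,p))-E_k|>n^{h-2}\log n)\le 2\exp(-\Omega((n^{h-2}\log n)^2/(n^2 (n^{h-3})^2)))=\exp(-\Omega((\log n)^2))$. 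By Markov's inequality the probability of having more than $\binom{n'}{k}n^{-\psi}$ bad $k$-sets is at most $n^{\psi}\exp(-\Omega((\log n)^2))$, and a union bound over the $O(n)$ values $k\in I$ yields $\Pr(\mathcal B_i)\le n^{1+\psi}\exp(-\Omega((\log n)^2))$; since $\psi=(\log n)^{1/2}$ we have $n^{1+\psi}=\exp(o((\log n)^2))$, so $\Pr(\mathcal B_i)=n^{-\omega(1)}$.

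Adding the three estimates gives $\Pr(\mathcal A_i\cup\mathcal B_i\cup\mathcal C_i)=n^{-\omega(1)}$, as required. The step I expect to be most delicate is the bound on $\Pr(\mathcal B_i)$: it relies on getting concentration of the $(h-1)$-clique count at the comparatively coarse scale $n^{h-2}\log n$ (at any finer scale a window of only $\log n$ standard deviations would be too narrow to contain the $\exp(-\Omega((\log n)^2))$ fraction of $k$-subsets we need to rule out), and crucially on using the edge-exposure martingale rather than the vertex-exposure martingale --- with vertex exposure the per-coordinate Lipschitz bound $O(n^{h-2})$ is far too large relative to the number of coordinates, and the resulting tail bound $\exp(-\Omega((\log n)^2/n))$ would be useless.
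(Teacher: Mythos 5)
Your proof follows the paper's argument essentially exactly: $\Pr(\mathcal A_i)$ via the induction hypothesis, $\Pr(\mathcal C_i)$ via Azuma--Hoeffding plus a union bound, and $\Pr(\mathcal B_i)$ via an edge-exposure Azuma--Hoeffding estimate for each $k$-set, then Markov's inequality, then a union bound over $k\in I$ (and your observation that edge-exposure is essential here is correct and implicit in the paper). The only cosmetic difference is in the $\mathcal C_i$ step, where you condition on the edges at $x$ and $y$ before applying Azuma--Hoeffding; the paper instead appeals directly to the computation in \cref{eq:delta-concentration}, which runs a vertex-exposure martingale with trivial increments at $x,y$, but the conclusions agree.
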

\begin{proof}
First, we have $\Pr\left(\mathcal{A}_{i}\right)\le (n')^{-\phi_{h-1,p,c'}(n')}=n^{-\omega(1)}$, by \cref{thm:dispersed} for $h-1$ (which we are assuming as our induction hypothesis).

Second, we have $\Pr\left(\mathcal{C}_{i}\right)=e^{-\Omega(n')}=n^{-\omega(1)}$ with exactly the same argument as in \cref{eq:delta-concentration} in the proof of \cref{thm:general-random-graphs} (using the Azuma--Hoeffding inequality) and the union bound.

Third, we consider $\mathcal{B}_i$. For each $k\in I$
and each subset $S\subseteq S_{i}-v_{i}$ of size $k$, consider the random variable $X_{K_{h-1}}\left(G_{i}\left[S\right]\right)$. This random variable has mean $E_k=\Omega(n^{h-1})$ and flipping the status of an edge causes a change of at most $O(n^{h-3})$. So, by the Azuma--Hoeffding
inequality we have
\[
\Pr\left(\left|X_{K_{h-1}}\left(G_{i}\left[S\right]\right)-E_{k}\right|>n^{h-2}\log n\right)=\exp\left(-\Omega\left(\frac{\left(n^{h-2}\log n\right)^2}{n^2\cdot n^{2(h-3)}}\right)\right)=e^{-\Omega\left((\log n)^2\right)}.
\]
Hence, the expected number of subsets $S$ for which $\left|X_{K_{h-1}}\left(G_{i}\left[S\right]\right)-E_{k}\right|>n^{h-2}\log n$
is $\binom{n'}{k}e^{-\Omega\left((\log n)^2\right)}$, and by Markov's
inequality, the probability that this occurs for more than $\binom{n'}{k}e^{-\left(\log n\right)^{3/2}}=\binom{n'}{k}n^{-\psi}$
subsets is at most $e^{-\Omega\left((\log n)^{2}\right)}=n^{-\omega\left(1\right)}$.
We can then take the union bound over all $k\in I\subseteq \{1,\dots,n\}$ to obtain $\Pr\left(\mathcal{B}_{i}\right)\le n^{-\omega(1)}$.
\end{proof}
Now we prove \cref{claim:probably-good-inducing}.
\begin{proof}[Proof of \cref{claim:probably-good-inducing}]
Fix some $i$; we will show that $G_i^\cap$ is good-inducing with probability $n^{-\omega(1)}$. We can then take the union bound over all $i$.

Let $W$ be the random variable $\Pr\left(\mathcal{A}_{i}\cup\mathcal{B}_{i}\cup\mathcal{C}_{i}\cond G_{i}^{\cap}\right)$
(which depends on $G_{i}^{\cap}$). By the law of total expectation and \cref{claim:probably-good}, we have $\E W=\Pr\left(\mathcal{A}_{i}\cup\mathcal{B}_{i}\cup\mathcal{C}_{i}\right)\le f$
for some $f=n^{-\omega\left(1\right)}$, so $\Pr\left(W\ge\sqrt{f}\right)\le\sqrt{f}$
by Markov's inequality. Since $\sqrt{f}$ is still of the form $n^{-\omega\left(1\right)}$, letting $\chi=-\log \sqrt f/\log n$, 
the desired result follows.
\end{proof}
\begin{proof}[Proof of \cref{claim:E-loose}]
For any $k\le n'$, note that
\[
E_{k}-E_{k-1}=p^{\binom{h-1}{2}}\binom{k}{h-1}-p^{\binom{h-1}{2}}\binom{k-1}{h-1}=O\left(n^{h-2}\right).
\]
Now, the second inequality then follows from the Azuma--Hoeffding inequality: we have just observed that adding or removing a vertex from $N_i$ changes $E_{|N_i|}$ by $O\left(n^{h-2}\right)$, so
\[
\Pr\left(\left|E_{\left|N_{i}\right|}-\E X_{i}\right|>n^{h-3/2}\log n\right) =\exp\left(-\Omega\left(\frac{\left(n^{h-3/2}\log n\right)^2}{n\cdot n^{2(h-2)}}\right)\right)=e^{-\Omega((\log n)^2)}=n^{-\omega\left(1\right)}.\]
For the first inequality, we can now assume that $x\ge \E X_i-n^{h-3/2}\log n-n^{h-2}\log n=\Omega(n^{h-1})$. Therefore, we can only have $\left|E_{k}-x\right|\le n^{h-2}\log n$ if $k=\Omega(n)$. If $k=\Omega(n)$ then we can compute $E_{k}-E_{k-1}=\Omega(n^{h-2})$, so there are only $O\left(\log n\right)$ values of $k$ which yield
$\left|E_{k}-x\right|\le n^{h-2}\log n$. Since $\left|N_{i}\right|$ has the binomial distribution $\Bin\left(n',p\right)$, the probability that $\left|N_{i}\right|$
takes one of these values is $O\left(\log n/\sqrt{n}\right)$, recalling that $n'=\Omega(n)$.
\end{proof}

\section{Concluding remarks\label{sec:concluding}}

In this paper we have proved several new anti-concentration inequalities
and given some applications. There are many interesting
directions of future research.

First, we still do not have a complete
understanding of anti-concentration for bounded-degree polynomials
in the ``Gaussian'' regime where $p$ is fixed. Most obviously, it would be
very interesting to remove the polylogarithmic factor from the Meka--Nguyen--Vu inequality, for polynomials which have both positive and negative coefficients. As noted in \cite{KST}, this would imply \cref{conj:sqrt}.

Also, while the Meka--Nguyen--Vu inequality gives an almost-optimal bound on $Q_{f(\x)}(1)$ (for a bounded-degree polynomial $f$ and $\x\in \Ber(p)^n$), our understanding of the whole concentration function $Q_{f(\x)}$ is still quite limited, even for ``dense'' polynomials with many large coefficients. For example, if $f$ has degree $d=O(1)$ and $\Omega(n^d)$ coefficients with absolute value at least 1, then the Meka--Nguyen--Vu inequality gives $Q_{f(\x)}(r)=O((\log n)^{O(1)}(r+1)/\sqrt n)$, whereas it seems likely that the correct bound should be $Q_{f(\x)}(r)=O((r^{1/d}+1)/\sqrt n)$ (attained by the polynomial $\left(x_1+\dots+x_{n}-pn\right)^d$).

Second, regarding the ``Poisson'' regime where $p$ may be a vanishing
function of $n$, we were not able to find a polynomial anti-concentration
inequality that implies \cref{conj:1/e}, which was our initial motivation
for this study. There are invariance principles (see \cite{FKMW16,FM16})
which allow us to compare $X_{G,k}$ to polynomials of Bernoulli random
variables, but as we observed in \cref{subsec:poisson}, in general
there are bounded-degree polynomials yielding point probabilities
much larger than $1/e$. The invariance principles in \cite{FKMW16,FM16}
yield polynomials with special structure (\emph{harmonic} polynomials),
and perhaps it would be feasible to prove an analogue of \cref{thm:poisson-nonnegative}
for such polynomials, which might yield a new proof of \cref{conj:1/e}
and a generalisation for hypergraphs.

So far, in order to avoid trivialities, when we allow $p$ to decrease with
$n$ we have been considering probabilities of the form $\Pr\left(X=x\right)$
only when $x\ne0$. A different way to avoid trivialities would be
to impose that the $a_{i}$ are nonzero and explicitly specify the
dependence of $p$ on $n$; of particular interest may be the Poisson
regime where $p=\lambda/n$ for some constant $\lambda$. However,
in this setting there do not seem to be theorems that are quite as
elegant as \cref{thm:poisson-L-O}. In the case where all the $a_{i}$
are positive, one can imitate Erd\H os' proof of the Erd\H os--Littlewood--Offord
theorem to prove a bound of the form
\[
\Pr\left(a_1\xi_1+\dots+a_n\xi_n=x\right)\le\max_{x\in\ZZ}\Pr\left(Z=x\right)+o\left(1\right),
\]
where $Z$ has the Poisson distribution $\Po\left(\lambda\right)$. If we do not require that the $a_{i}$ are all positive, we get some
more complicated behaviour. We can resolve the linear case with Fourier
analysis, as follows.
\begin{prop}
\label{thm:bessel}Fix $\lambda>0$ and consider a linear polynomial
$X=\sum_{i=1}^{n}a_{i}\xi_{i}$, where $\x\in\Ber\left(\lambda/n\right)^{n}$.
Then for any $x\in\RR$, 
\[
\Pr\left(X=x\right)\le\frac{I_{0}\left(\lambda\right)}{e^{\lambda}}+o\left(1\right),
\]
where
\[
I_{0}\left(\lambda\right)=\sum_{i=0}^{\infty}\frac{\left(\lambda/2\right)^{2i}}{\left(i!\right)^{2}}
\]
is an evaluation of a modified Bessel function of the first kind.
\end{prop}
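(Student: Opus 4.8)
The plan is to bound the point probability through the characteristic function $\phi_X(t):=\E e^{itX}$, via the standard identity
\[
\Pr(X=x)=\lim_{T\to\infty}\frac{1}{2T}\int_{-T}^{T}e^{-itx}\phi_X(t)\,dt,
\]
which follows by dominated convergence after interchanging expectation and integral: the quantity $\frac{1}{2T}\int_{-T}^{T}e^{it(X-x)}\,dt=\frac{\sin(T(X-x))}{T(X-x)}$ is bounded by $1$ and tends to $\one_{X=x}$ as $T\to\infty$. Taking absolute values inside the integral gives, for every $x$,
\[
\Pr(X=x)\le\limsup_{T\to\infty}\frac{1}{2T}\int_{-T}^{T}|\phi_X(t)|\,dt,
\]
and the right-hand side no longer depends on $x$, so it suffices to estimate this Cesàro average of $|\phi_X|$.

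Since the $\xi_i$ are independent we have $\phi_X(t)=\prod_{j=1}^{n}\bigl((1-\tfrac{\lambda}{n})+\tfrac{\lambda}{n}e^{ita_j}\bigr)$, and an elementary computation gives $\bigl|(1-\tfrac{\lambda}{n})+\tfrac{\lambda}{n}e^{i\theta}\bigr|^2=1-\tfrac{2\lambda}{n}(1-\tfrac{\lambda}{n})(1-\cos\theta)$. Writing $\mu:=\lambda(1-\tfrac{\lambda}{n})$ and using $1-u\le e^{-u}$ followed by a square root,
\[
|\phi_X(t)|\le\exp\Bigl(-\tfrac{\mu}{n}\sum_{j=1}^{n}\bigl(1-\cos(ta_j)\bigr)\Bigr)=e^{-\mu}\exp\Bigl(\tfrac{1}{n}\sum_{j=1}^{n}\mu\cos(ta_j)\Bigr).
\]
The key step is to apply Jensen's inequality to the convex function $\exp$ and the uniform average over $j\in\{1,\dots,n\}$, which decouples the product into a sum of single-frequency pieces:
\[
|\phi_X(t)|\le e^{-\mu}\cdot\frac{1}{n}\sum_{j=1}^{n}e^{\mu\cos(ta_j)}.
\]
Now for each fixed $j$ (here we use that $a_j\ne 0$, in accordance with the discussion preceding the proposition) the function $t\mapsto e^{\mu\cos(ta_j)}$ is continuous and periodic with period $2\pi/|a_j|$, so its Cesàro average over $\RR$ equals its average over one period, which after the substitution $\theta=ta_j$ is $\frac{1}{2\pi}\int_{0}^{2\pi}e^{\mu\cos\theta}\,d\theta=I_0(\mu)$ --- precisely the classical integral representation of the modified Bessel function. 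Since the sum over $j$ is finite, $\limsup_{T\to\infty}$ passes through it, and we conclude
\[
\Pr(X=x)\le\limsup_{T\to\infty}\frac{1}{2T}\int_{-T}^{T}|\phi_X(t)|\,dt\le e^{-\mu}\cdot\frac{1}{n}\sum_{j=1}^{n}I_0(\mu)=e^{-\mu}I_0(\mu).
\]

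Finally, since $\mu=\lambda(1-\lambda/n)\to\lambda$ as $n\to\infty$ and $z\mapsto e^{-z}I_0(z)$ is continuous, the bound $e^{-\mu}I_0(\mu)$ equals $\tfrac{I_0(\lambda)}{e^{\lambda}}+o(1)$, as required; in fact one even gets the clean non-asymptotic bound $\Pr(X=x)\le e^{-\mu}I_0(\mu)$. The only step with genuine content is the Jensen move: without it one is left having to average the product $\prod_{j}\bigl((1-\tfrac{\lambda}{n})+\tfrac{\lambda}{n}e^{ita_j}\bigr)$ over $t\in\RR$, which is awkward when the $a_j$ are incommensurable, whereas convexity reduces everything to the single integral defining $I_0$. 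Reassuringly, the chain of inequalities is asymptotically tight: for the configuration in which half the $a_i$ equal $+1$, half equal $-1$, and $x=0$, all the $\cos(ta_j)$ coincide so Jensen is an equality, $X$ converges in distribution to the difference of two independent $\Po(\lambda/2)$ variables, and indeed $\Pr(X=0)\to\tfrac{I_0(\lambda)}{e^{\lambda}}$.
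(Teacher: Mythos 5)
Your proof is correct. Both you and the paper take a Fourier-analytic route, but the mechanics differ in two places. The paper first reduces to integer coefficients via a rational projection, then works over $\ZZ/N\ZZ$ for a large prime $N$ (so the point probability is an exact finite Fourier sum), applies H\"older's inequality with exponent $n$ to decouple the product $\prod_j|\hat f_{a_j}(k)|$, and finally lets $N\to\infty$ and then $n\to\infty$. You instead stay over $\RR$ with a Ces\`aro inversion formula for the characteristic function, push the modulus through $1-u\le e^{-u}$ to put everything in exponential form, and then use Jensen on the exponential to decouple; the long-run average of each single-frequency piece $e^{\mu\cos(ta_j)}$ is identified with $I_0(\mu)$ by periodicity, which is where the hypothesis $a_j\ne 0$ enters (as it also implicitly does in the paper, where the projection $P$ is required to satisfy $P(a_i)\ne 0$). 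Your route buys a cleaner non-asymptotic bound, $\Pr(X=x)\le e^{-\mu}I_0(\mu)$ with $\mu=\lambda(1-\lambda/n)$, and avoids both the integer-reduction step and the double limit $N\to\infty$, $n\to\infty$. Morally the two decoupling steps are close cousins (H\"older with exponent $n$ is a power-mean inequality, Jensen on $\exp$ is an AM--GM-type inequality), so neither approach is fundamentally stronger, but yours is the more streamlined of the two.
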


This bound is best-possible, as can be proved by considering the case where
$a_{1},\dots,a_{\floor{n/2}}=1$ and $a_{\floor{n/2}+1},\dots,a_{n}=-1$.
\begin{proof}[Proof of \cref{thm:bessel}]
First, with a standard reduction we may assume all the $a_i$ are integers. Indeed, we can view $\RR$ as a vector space over the rational numbers $\QQ$, and choose a projection map $P:\RR \to \QQ$ such that $P(a_i)\ne 0$ for each $i$. Clearing denominators by multiplying by some integer $d$, we obtain nonzero integers $a_i'=d P(a_i)$ such that whenever we have $a_1\xi_1+\dots+a_n\xi_n=x$, we have $a_1'\xi_1+\dots+a_n'\xi_n=d P(x)$. That is, any anti-concentration bound for the random variable $a_1'\xi_1+\dots+a_n'\xi_n$ implies the same bound for $a_1\xi_1+\dots+a_n\xi_n$.

So, we assume each $a_i$ is an integer (therefore we may also assume $x$ is an integer). We do Fourier analysis over $\ZZ/N\ZZ$, for some prime $N$ very large relative to $n,x$ and the $a_i$. For all $a\in \ZZ/N\ZZ$, let $f_{a}=\left(1-\lambda/n\right)\delta_{0}+\left(\lambda/n\right)\delta_{a}$,
so that
\[
\hat{f}_{a}\left(k\right)=\frac{\lambda}{n}e^{-2\pi iak/N}+\left(1-\frac{\lambda}{n}\right).
\]
Also, we have
\begin{align*}
\Pr\left(X=x\right) & =f_{a_{1}}*\dots*f_{a_{n}}\left(x\right)\\
 & =\frac{1}{N}\sum_{k=0}^{N-1}e^{2\pi i x k/N}\prod_{j=1}^{n}\hat{f}_{a_{j}}\left(k\right)\\
 & \le\frac{1}{N}\sum_{k=0}^{N-1}\prod_{j=1}^{n}\left|\hat{f}_{a_{j}}\left(k\right)\right|\\
 & \le\prod_{j=1}^{n}\left(\frac{1}{N}\sum_{k=0}^{N-1}\left|\hat{f}_{a_{j}}\left(k\right)\right|^{n}\right)^{1/n}\\
 & =\frac{1}{N}\sum_{k=0}^{N-1}\left|\hat{f}_{1}\left(k\right)\right|^{n}, 
\end{align*}
where the second equality is by the Fourier inversion formula, and the second inequality is by H\"older's inequality. Now, taking $N\to\infty$ gives
$$\Pr(X=x) \le \int_{0}^{1}\left|\frac{\lambda}{n}e^{-2\pi ix}+\left(1-\frac{\lambda}{n}\right)\right|^{n}\d x.$$
As $n\to\infty$ we can compute 
\[
\left|\frac{\lambda}{n}e^{-2\pi ix}+\left(1-\frac{\lambda}{n}\right)\right|^{n}\to\left|e^{-\lambda(1-\cos(2\pi x)+i\sin(2\pi x))}\right|= e^{-\lambda+\lambda\cos\left(2\pi x\right)},
\]
and it is known (see for example \cite[Eq.~(3), p.~181]{Wat95}) that
\[
\int_{0}^{1}e^{\lambda\cos\left(2\pi x\right)}\d x=I_{0}\left(\lambda\right),
\]
so by the dominated convergence theorem, $\Pr\left(X=x\right)\le I_{0}\left(\lambda\right)e^{-\lambda}+o\left(1\right)$.
\end{proof}

We also think it might be interesting to study the situation for general $p$ (in particular, the intermediate
regime between $p=\lambda/n$ ``Poisson'' behaviour and $p=1/2$ ``Gaussian''
behaviour). The linear case would be a good start, as follows.
\begin{question}
\label{conj:general-LO}
Let $\boldsymbol{a}=(a_1,\ldots,a_n)\in\left(\RR\setminus\left\{ 0\right\} \right)^{n}$, $\x\in\Ber\left(p\right)^{n}$ for some $0<p\le1/2$ and $X=a_{1}\xi_{1}+\dots+a_{n}\xi_{n}$. What upper bounds (in terms of $n$ and $p$) can we give on the maximum point probability $Q_X(0)=\max_{x\in \RR}\Pr\left(X=x\right)$?
\end{question}

Also, we remark that the constant $1/e$ in \cref{thm:poisson-L-O,thm:poisson-nonnegative} appears in several other combinatorial and probabilistic problems, such as in a well-known conjecture of Feige~\cite{Fei06}. 

Finally, on the subject of subgraph counts in random graphs, it may also be interesting to study anti-concentration of the number of \emph{induced} copies
$X_{H}'$ of a subgraph $H$ in a random graph $\GG\left(n,p\right)$. (This question was also raised by Meka, Nguyen and Vu~\cite{MNV16}). Using \cref{thm:rough-LO} in the same way as the proof of \cref{thm:general-random-graphs}, one can prove that $\Pr\left(\left|X_{H}'-x\right|\le n^{h-2}\right)=O\left(\frac{1}{n}\right)$, provided $p$ is different to the edge-density of $H$. The natural analogue of \cref{conj:H-count} is that for a fixed graph $H$ and fixed $p\in\left(0,1\right)$, we have
\[
\max_{x\in \NN}\Pr\left(X_{H}'=x\right)=O\left(1/\sqrt{\Var\left(X_{H}'\right)}\right).
\]
We remark that the behaviour of $\sqrt{\Var\left(X_{H}'\right)}$ is not entirely trivial: for most values of $p$ it has order $\Theta(n^{h-1})$, but when $p$ is exactly equal to the edge-density of $H$ it may have order $\Theta(n^{h-3/2})$ or $\Theta(n^{h-2})$ (see \cite[Theorem~6.42]{JLR00}).

\begin{thebibliography}{10}

\bibitem{Alo99}
N.~Alon, \emph{Combinatorial {N}ullstellensatz}, Combin. Probab. Comput.
  \textbf{8} (1999), no.~1-2, 7--29, Recent trends in combinatorics
  (M\'{a}trah\'{a}za, 1995).

\bibitem{AHKT}
N.~Alon, D.~Hefetz, M.~Krivelevich, and M.~Tyomkyn, \emph{Edge-statistics on
  large graphs}, Combin. Probab. Comput. \textbf{29} (2020), no.~2, 163--189.

\bibitem{BHLP16}
J.~Balogh, P.~Hu, B.~Lidick\'y, and F.~Pfender, \emph{Maximum density of
  induced 5-cycle is achieved by an iterated blow-up of 5-cycle}, European J.
  Combin. \textbf{52} (2016), part~A, 47--58.

\bibitem{BKR89}
A.~D. Barbour, M.~Karo\'{n}ski, and A.~Ruci\'{n}ski, \emph{A central limit
  theorem for decomposable random variables with applications to random
  graphs}, J. Combin. Theory Ser. B \textbf{47} (1989), no.~2, 125--145.

\bibitem{Ber16}
R.~Berkowitz, \emph{A quantitative local limit theorem for triangles in random
  graphs}, arXiv preprint arXiv:1610.01281 (2016).

\bibitem{Ber18}
R.~Berkowitz, \emph{A local limit theorem for cliques in {$G(n,p)$}}, arXiv
  preprint arXiv:1811.03527 (2018).

\bibitem{Bol01}
B.~Bollob\'{a}s, \emph{Random graphs}, second ed., Cambridge Studies in
  Advanced Mathematics, vol.~73, Cambridge University Press, Cambridge, 2001.

\bibitem{BPR00}
B.~Bollob\'{a}s, L.~Pebody, and O.~Riordan, \emph{Contraction-deletion
  invariants for graphs}, J. Combin. Theory Ser. B \textbf{80} (2000), no.~2,
  320--345.

\bibitem{BVW10}
J.~Bourgain, V.~H. Vu, and P.~M. Wood, \emph{On the singularity probability of
  discrete random matrices}, J. Funct. Anal. \textbf{258} (2010), no.~2,
  559--603.

\bibitem{CTV06}
K.~P. Costello, T.~Tao, and V.~Vu, \emph{Random symmetric matrices are almost
  surely nonsingular}, Duke Math. J. \textbf{135} (2006), no.~2, 395--413.

\bibitem{MN04}
A.~de~Mier and M.~Noy, \emph{On graphs determined by their {T}utte
  polynomials}, Graphs Combin. \textbf{20} (2004), no.~1, 105--119.

\bibitem{Dun12}
S.~R. Dunbar, \emph{Topics in probability theory and stochastic processes: The
  moderate deviations result}, 2012, URL:
  \url{https://www.math.unl.edu/~sdunbar1/ProbabilityTheory/Lessons/BernoulliTrials/ModerateDeviations/moderatedeviations.pdf}.
  Last visited on 2018/12/03.

\bibitem{Erd45}
P.~Erd{\H{o}}s, \emph{On a lemma of {L}ittlewood and {O}fford}, Bull. Amer.
  Math. Soc. \textbf{51} (1945), 898--902.

\bibitem{Fei06}
U.~Feige, \emph{On sums of independent random variables with unbounded variance
  and estimating the average degree in a graph}, SIAM J. Comput. \textbf{35}
  (2006), no.~4, 964--984.

\bibitem{FKMW16}
Y.~Filmus, G.~Kindler, E.~Mossel, and K.~Wimmer, \emph{Invariance principle on
  the slice}, 31st {C}onference on {C}omputational {C}omplexity, LIPIcs.
  Leibniz Int. Proc. Inform., vol.~50, Schloss Dagstuhl. Leibniz-Zent. Inform.,
  Wadern, 2016, Art. No. 15, 10 pages.

\bibitem{FM16}
Y.~Filmus and E.~Mossel, \emph{Harmonicity and invariance on slices of the
  {B}oolean cube}, 31st {C}onference on {C}omputational {C}omplexity, LIPIcs.
  Leibniz Int. Proc. Inform., vol.~50, Schloss Dagstuhl. Leibniz-Zent. Inform.,
  Wadern, 2016, Art. No. 16, 13 pages.

\bibitem{FKS2}
J.~Fox, M.~Kwan, and L.~Sauermann, \emph{Anticoncentration for subgraph counts
  in random graphs}, arXiv preprint arXiv:1905.12749 (2019).

\bibitem{FKSnew}
J.~Fox, M.~Kwan, and B.~Sudakov, \emph{Acyclic subgraphs of tournaments with high chromatic number}, arXiv preprint arXiv:1912.07722 (2019).

\bibitem{FS}
J.~Fox and L.~Sauermann, \emph{A completion of the proof of the edge-statistics
  conjecture}, Advances in Combinatorics 2020:4.

\bibitem{GK16}
J.~Gilmer and S.~Kopparty, \emph{A local central limit theorem for triangles in
  a random graph}, Random Structures Algorithms \textbf{48} (2016), no.~4,
  732--750.

\bibitem{HT}
D.~Hefetz and M.~Tyomkyn, \emph{On the inducibility of cycles}, J.
  Combin. Theory Ser. B \textbf{133} (2018), 243--258.

\bibitem{JLR00}
S.~Janson, T.~{\L{}}uczak, and A.~Ruci{\'n}ski, \emph{Random graphs}, Cambridge
  University Press, 2000.

\bibitem{Kal}
O.~Kallenberg, \emph{Foundations of modern probability}, Probability and its
  Applications (New York), Springer-Verlag, New York, 1997.

\bibitem{KV00}
J.~H. Kim and V.~H. Vu, \emph{Concentration of multivariate polynomials and its
  applications}, Combinatorica \textbf{20} (2000), no.~3, 417--434.

\bibitem{KNV}
D.~Kr\'al', S.~Norin, and J.~Volec, \emph{On the exact maximum induced density
  of almost all graphs and their inducibility},  J. Combin. Theory Ser. A \textbf{161} (2019), 359--363.

\bibitem{KST}
M.~Kwan, B.~Sudakov, and T.~Tran, \emph{Anticoncentration for subgraph
  statistics}, J. London Math. Soc. \textbf{99}, no.~3 (2019), 757--777.

\bibitem{LO43}
J.~E. Littlewood and A.~C. Offord, \emph{On the number of real roots of a
  random algebraic equation. {III}}, Rec. Math. [Mat. Sbornik] N.S.
  \textbf{12(54)} (1943), 277--286.

\bibitem{LMP04}
M.~Loebl, J.~Matou\v{s}ek, and O.~Pangr\'{a}c, \emph{Triangles in random
  graphs}, Discrete Math. \textbf{289} (2004), no.~1-3, 181--185.

\bibitem{MMNT}
A.~Martinsson, F.~Mousset, A.~Noever, and M.~Truji{\'c}, \emph{The
  edge-statistics conjecture for $\ell \ll k^{6/5}$}, Israel J. Math. \textbf{234} (2019), no.~2, 677--690.

\bibitem{McD98}
C.~McDiarmid, \emph{Concentration}, Probabilistic methods for algorithmic
  discrete mathematics, Algorithms Combin., vol.~16, Springer, Berlin, 1998,
  pp.~195--248.

\bibitem{MNV16}
R.~Meka, O.~Nguyen, and V.~Vu, \emph{Anti-concentration for polynomials of
  independent random variables}, Theory Comput. \textbf{12} (2016), Paper No.
  11, 16 pages.

\bibitem{NV13}
H.~H. Nguyen and V.~H. Vu, \emph{Small ball probability, inverse theorems, and
  applications}, Erd{\H{o}}s centennial, Bolyai Soc. Math. Stud., vol.~25,
  J\'{a}nos Bolyai Math. Soc., Budapest, 2013, pp.~409--463.

\bibitem{PG75}
N.~Pippenger and M.~C. Golumbic, \emph{The inducibility of graphs}, J.
  Combin. Theory Ser. B \textbf{19} (1975), no.~3, 189--203.

\bibitem{RS96}
J.~Rosi\'nski and G.~Samorodnitsky, \emph{Symmetrization and concentration inequalities for multilinear forms with applications to zero-one laws for {L}\'{e}vy chaos}, Ann. Probab.
  \textbf{24} (1996), no.~1, 422--437.

\bibitem{RV13}
A.~Razborov and E.~Viola, \emph{Real advantage}, ACM Trans. Comput. Theory
  \textbf{5} (2013), no.~4, Art. 17, 8 pages.

\bibitem{Ros11}
N.~Ross, \emph{Fundamentals of {S}tein's method}, Probab. Surv. \textbf{8}
  (2011), 210--293.

\bibitem{TV09a}
T.~Tao and V.~Vu, \emph{From the {L}ittlewood-{O}fford problem to the circular
  law: universality of the spectral distribution of random matrices}, Bull.
  Amer. Math. Soc. (N.S.) \textbf{46} (2009), no.~3, 377--396.

\bibitem{TV09b}
T.~Tao and V.~H. Vu, \emph{Inverse {L}ittlewood-{O}fford theorems and the
  condition number of random discrete matrices}, Ann. of Math. (2) \textbf{169}
  (2009), no.~2, 595--632.

\bibitem{Vu17}
V.~Vu, \emph{Anti-concentration inequalities for polynomials}, A journey
  through discrete mathematics, Springer, Cham, 2017, pp.~801--810.

\bibitem{Wat95}
G.~N. Watson, \emph{A treatise on the theory of {B}essel functions}, Cambridge
  Mathematical Library, Cambridge University Press, Cambridge, 1995, Reprint of
  the second (1944) edition.

\bibitem{Yus}
R.~Yuster, \emph{On the exact maximum induced density of almost all graphs and
  their inducibility}, J.
  Combin. Theory Ser. B \textbf{136} (2019), 81--109.

\end{thebibliography}

\textbf{Acknowledgements.} We would like to thank Van Vu for some clarifications regarding his work with Meka and Nguyen~\cite{MNV16}. Also, after we completed a draft of this paper, Anders Martinsson and Frank Mousset told us that one can give an alternative proof of \cref{thm:poisson-nonnegative} using the ideas in \cite[Section~2]{MMNT}. Finally, in a previous version of this paper we stated a specific conjecture in the setting of \cref{conj:general-LO}, which was observed by Mihir Singhal to be incorrect.

\end{document}